\numberwithin{equation}{section}  
\theoremstyle{plain}
\newtheorem{lemma}[equation]{Lemma}
\newtheorem{theorem}[equation]{Theorem}
\newtheorem{proposition}[equation]{Proposition}
\newtheorem{corollary}[equation]{Corollary}
\newtheorem*{thm1}{Theorem~\ref{th:lgp}}
\newtheorem*{thm2}{Theorem~\ref{th:stratification}}
\theoremstyle{definition}
\newtheorem{definition}[equation]{Definition}
\newtheorem{example}[equation]{Example}
\theoremstyle{remark} 
\newtheorem{remark}[equation]{Remark} 
\newcommand{\Coh}{\operatorname{\mathsf{Coh}}}
\newcommand{\colim}{\operatorname{colim}}
\newcommand{\End}{\operatorname{End}}
\newcommand{\Ext}{\operatorname{Ext}}
\newcommand{\Hom}{\operatorname{Hom}}
\newcommand{\Id}{\operatorname{Id}}
\newcommand{\Ker}{\operatorname{Ker}}
\newcommand{\Loc}{\operatorname{\mathsf{Loc}}}
\renewcommand{\mod}{\operatorname{\mathsf{mod}}}
\newcommand{\Mod}{\operatorname{\mathsf{Mod}}}
\newcommand{\Spec}{\operatorname{Spec}}
\newcommand{\StMod}{\operatorname{\mathsf{StMod}}}
\newcommand{\stmod}{\operatorname{\mathsf{stmod}}}
\newcommand{\Supp}{\operatorname{Supp}}
\newcommand{\supp}{\operatorname{supp}}
\newcommand{\Thick}{\operatorname{\mathsf{Thick}}}
\newcommand{\op}{\mathrm{op}}
\newcommand{\per}{\mathrm{per}}
\newcommand{\Ab}{\mathsf{Ab}} 
\newcommand{\comp}{\mathop{\circ}}
\newcommand{\col}{\colon}
\newcommand{\kos}[2]{{#1}/\!\!/{#2}}
\newcommand{\lto}{\longrightarrow}
\newcommand{\Ra}{\Rightarrow}
\newcommand{\xra}{\xrightarrow}
\newcommand{\bik}{Benson/Iyengar/Krause}
\def\mcU{\mathcal{U}} 
\def\mcV{\mathcal{V}}
\def\mcW{\mathcal{W}} 
\def\mcZ{\mathcal{Z}}
\def\sfc{\mathsf c}
\def\sfA{\mathsf A} 
\def\sfC{\mathsf C}
\def\sfD{\mathsf D}
\def\sfS{\mathsf S} 
\def\sfT{\mathsf T} 
\def\sfU{\mathsf U}
\def\sfV{\mathsf V}
\def\bbN{\mathbb N}
\def\bbZ{\mathbb Z}
\newcommand{\fa}{\mathfrak{a}} 
\newcommand{\fb}{\mathfrak{b}}
\newcommand{\fp}{\mathfrak{p}}
\newcommand{\fq}{\mathfrak{q}}
\newcommand{\gam}{\varGamma}
\def\Si{\Sigma}
\def\one{\mathbf 1}
\title[A local-global principle for small triangulated categories]{A
  local-global principle\\ for small triangulated categories}
\author[Benson, Iyengar, Krause]{Dave Benson, Srikanth B. Iyengar, Henning Krause}
\address{Dave Benson \\ 
Institute of Mathematics\\ 
University of Aberdeen\\ 
King's College\\ 
Aberdeen AB24 3UE\\ 
Scotland U.K.}
\address{Srikanth B. Iyengar\\ 
Department of Mathematics\\
University of Nebraska\\ 
Lincoln, NE 68588\\ 
U.S.A.}
\address{Henning Krause\\ 
Fakult\"at f\"ur Mathematik\\ 
Universit\"at Bielefeld\\ 
33501 Bielefeld\\ 
Germany.}
\begin{document}

\begin{abstract}
  Local cohomology functors are constructed for the category of
  cohomological functors on an essentially small triangulated category
  $\sfT$ equipped with an action of a commutative noetherian
  ring. This is used to establish a local-global principle and to
  develop a notion of stratification, for $\sfT$ and the cohomological
  functors on it, analogous to such concepts for compactly generated
  triangulated categories.
\end{abstract}

\keywords{cohomological functor, local cohomology, local-global principle, support}
\subjclass[2010]{18E30 (primary), 13D45, 16E35, 20J06}

\thanks{Version from July 8, 2014.\\
  The authors are grateful to MSRI at Berkeley for support while this
  work was in progress.  The second author was partly supported by NSF
  grant DMS 1201889 and a Simons Fellowship.}

\maketitle 
\setcounter{tocdepth}{1}
\tableofcontents

\section{Introduction}

In this paper we establish an analogue of the local-global principle
from commutative algebra \cite[Chap.~II, \S3]{Bourbaki:1961a} for an
essentially small triangulated category, using the central action of a
graded commutative ring. This has applications to the theory of
support varieties in representation theory and in commutative algebra,
that started with the work of Quillen \cite{Quillen:1971ab} and
Carlson \cite{Carlson:1983a}.

Our paradigms for the local-global principle are the ones from
\cite{Benson/Iyengar/Krause:2011a} and from Stevenson's
work~\cite{Stevenson:2011a} for compactly generated (so ``big'')
triangulated categories. These play a crucial role in the
classification of localising subcategories of the stable module
category of the group algebra of a finite
group~\cite{Benson/Iyengar/Krause:2011a} and of the singularity
category of a locally complete intersection
ring~\cite{Stevenson:2011b}. The local-global principle from
\cite{Benson/Iyengar/Krause:2011a} does yield an analogue (see
Proposition~\ref{pr:big-implies-small}) for the ``small'' category of
compact objects, which is useful in studying its thick subcategories,
for example. The work presented here arose from a search for a more
direct proof of this result. Our reason for doing so, besides the
obvious aesthetic one, is that there are small categories for which
there is no canonical choice of a big category. And even if there were
one, it is not clear that an action of a ring of operators on the
small category extends to an action on the big one.

Following an idea of Grothendieck--Verdier
\cite{Grothendieck/Verdier:1972a}, in this work we propose a different
model for such constructions. Namely, given an essentially small
triangulated category $\sfT$, we consider the category $\Coh\sfT$ of
cohomological functors $\sfT^\op\to\Ab$ into the category of abelian
groups.  Up to an equivalence, this is the category of ind-objects of
$\sfT$ in the sense of \cite{Grothendieck/Verdier:1972a}. The functor
category contains a copy of $\sfT$, because Yoneda's lemma allows to
identify an object $X\in\sfT$ with the representable functor
$\Hom_\sfT(-,X)$. While $\Coh\sfT$ is no longer triangulated, it does
carry an exact structure that is sufficient for our purposes;
moreover, it admits filtered colimits. These are the basic ingredients
we use for setting up our machinery.

Let us explain the main results in this work. Fix a noetherian graded
commutative ring $R$ acting centrally on $\sfT$; the principal
examples are listed in Example~\ref{ex:ring-actions}. This gives for each pair
of objects $X,Y$ in $\sfT$ an $R$-action on the graded abelian group
\[
\Hom^*_\sfT(X,Y)=\bigoplus_{n\in\bbZ}\Hom_\sfT(X,\Si^{n}Y).
\]
Let $\Spec R$ denote the set of homogeneous prime ideals. For each
$\fp\in\Spec R$ there is a localisation functor $\sfT\to \sfT_\fp$
taking an object $X$ to $X_\fp$. The category $\sfT_\fp$ has the same
objects as $\sfT$ and there is a natural isomorphism
\[\Hom^*_\sfT(X,Y)_\fp\xra{\sim}\Hom^*_{\sfT_\fp}(X_\fp,Y_\fp).\]

The formulation of the following local-global principle involves
\emph{Koszul objects}. Given an object $X\in\sfT$ and a homogeneous
ideal $\fa$ of $R$, an iterated cone construction yields an object
$\kos X\fa$. While this object depends on a choice of a sequence of
generators of $\fa$, the thick subcategory generated by it does not;
see Lemma~\ref{le:kos}. For $\fp\in\Spec R$ set
$X(\fp)=\kos{X_\fp}\fp$.

\begin{thm1}[Local-global principle]
 Let $\sfS$ be a thick subcategory of $\sfT$. Then the following
  conditions are equivalent for an object $X$ in $\sfT$:
\begin{enumerate}
\item $X$ belongs to $\sfS$.
\item $X_\fp$ belongs to $\Thick(\sfS_\fp)$ for each $\fp\in\Spec R$.
\item $X(\fp)$ belongs to $\Thick(\sfS_\fp)$ for each $\fp\in\Spec R$.
\end{enumerate}
\end{thm1}

Motivated by this result, we define the \emph{support} of an object
$X\in\sfT$ to be the set
\[
\supp_R X=\{\fp\in\Spec R\mid X(\fp)\neq 0\}.
\]
When the $R$-module $\End^*_\sfT(X)$ is finitely generated, this
coincides with the support, in the usual sense in commutative algebra,
of the $R$-module $\End^{*}_{\sfT}(X)$; see
Proposition~\ref{pr:tria_support}. For each $\fp$ in $\Spec R$, set
\[
\gam_\fp\sfT=\{X\in\sfT_\fp\mid \End_{\sfT_\fp}^*(X)_\fq=0\text{ for
  all }\fq\not\supseteq\fp\}.
\]
This is a thick subcategory of $\sfT_{\fp}$.  We say that $\sfT$ is
\emph{stratified} by the action of $R$ if for each $\fp$ in $\Spec R$,
the category $\gam_\fp\sfT$ admits no proper thick subcategory.

\begin{thm2}
  Suppose that $\sfT$ is stratified by the action of $R$. For any pair
  of objects $X,Y$ in $\sfT$ one has
\begin{align*}
X\in\Thick(Y) \quad&\iff\quad \supp_R X\subseteq\supp_R Y,\\
\Hom^*_\sfT(X,Y)=0\quad&\iff\quad (\supp_R X)\cap(\supp_R
Y)=\varnothing.
\end{align*}
\end{thm2}

There is a partial converse when the endomorphism rings of objects in
$\sfT$ are finitely generated $R$-modules: If $\sfT$ is not
stratified, then there are objects in $\sfT$ having the same support but
generating different thick subcategories; see
Proposition~\ref{pr:stratification-converse}.

The proofs of Theorems~\ref{th:lgp} and \ref{th:stratification}
involve the category of cohomological functors. Indeed, the $R$-action
on $\sfT$ has an obvious extension to $\Coh\sfT$, and based on this we
develop a theory of local cohomology and support for objects in
$\Coh\sfT$, analogous to the one in \cite{
  Benson/Iyengar/Krause:2008a} for compactly generated triangulated
categories. This forms the foundation for much of this work.

In order to illustrate these results and techniques, it is shown that
the category of perfect complexes over a commutative noetherian ring
is stratified; this amounts to a classical theorem of Hopkins
\cite{Hopkins:1987a} and Neeman \cite{Neeman:1992b}. Applications of
the local-global principle, Theorem~\ref{th:lgp}, to the study of
modules over locally complete intersections and over integral group
rings, will appear elsewhere.

Most ideas in this paper are taken from our previous work
\cite{Benson/Iyengar/Krause:2008a, Benson/Iyengar/Krause:2011a,
  Benson/Iyengar/Krause:2011b}; see also the references given there
for inspiration by other authors. However, the categorical setting in
this work is fundamentally different and the systematic use of the
category of cohomological functors in this context seems to be new.
We do not work with a compactly generated triangulated category having
arbitrary coproducts, but instead with an essentially small
triangulated category. A brief comparison between these two approaches
can be found in the final section. Otherwise, references to previous
work are kept to a minimum.

\section{Cohomological functors}

In this section we introduce the category of cohomological functors on
a triangulated category and study its basic properties. For instance,
we discuss base change and a long exact sequence corresponding to a
Verdier quotient.

\subsection*{Cohomological functors}

Let $\sfT$ be an essentially small triangulated category with
suspension $\Si\colon\sfT\xra{\sim}\sfT$.  Recall that a functor
$\sfT^\op\to\Ab$ into the category of abelian groups is
\emph{cohomological} if it takes exact triangles to exact sequences.
We denote by $\Coh\sfT$ the category of cohomological
functors. Morphisms in $\Coh\sfT$ are natural transformation and the
Yoneda functor $\sfT\to\Coh\sfT$ sending $X\in\sfT$ to
\[H_X=\Hom_\sfT(-,X)\] is fully faithful.  The suspension $\Si$ extends
to a functor $\Coh\sfT\xra{\sim}\Coh\sfT$ by taking $F$ in $\Coh\sfT$
to $F\comp\Si^{-1}$; we denote this again by $\Si$.  

It is convenient to view $\Coh\sfT$ as a full subcategory of the
category $\Mod\sfT$ of all additive functors $\sfT^\op\to\Ab$.  Note
that (co)limits in $\Mod\sfT$ are computed pointwise.  For $E$ and $F$
in $\Mod\sfT$ we write $\Hom(E,F)$ for the set of morphisms from $E$
to $F$.  Thus $\Hom(H_X,F)\cong F(X)$ for $X$ in $\sfT$, by Yoneda's
lemma.

Any additive functor $F\colon\sfT^\op\to\Ab$ can be written
canonically as a colimit of representable functors
\begin{equation}\label{eq:slice}
(\colim_{H_X\to F} H_X) \xra{\sim}F
\end{equation}
where the colimit is taken over the slice category $\sfT/F$; see
\cite[Proposition~3.4]{Grothendieck/Verdier:1972a}. Objects in
$\sfT/F$ are morphisms $ H_X\to F$ where $X$ runs through the objects
of $\sfT$. A morphism in $\sfT/F$ from $H_X\xra{\phi} F$ to
$H_{X'}\xra{\phi'} F$ is a morphism $\alpha \colon X\to X'$ in $\sfT$
such that $\phi' H_{\alpha}=\phi$.

A theorem of Lazard says that a module is flat if and only if it is a
filtered colimit of finitely generated free modules; this has been
generalised to functor categories by Oberst and R\"ohrl
\cite{Oberst/Roehrl:1970a}.  The following lemma shows that
cohomological and flat functors agree; this is well-known, for
instance from \cite[Lemma~2.1]{Neeman:1992a}.

\begin{lemma}
\label{le:flat}
The cohomological functors $\sfT^\op\to\Ab$ are precisely the filtered
colimits of representable functors (in the category of additive
functors $\sfT^\op\to\Ab$).  In particular, the category $\Coh\sfT$
has filtered colimits.
\end{lemma}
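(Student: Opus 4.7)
The plan is to prove the two inclusions separately and then deduce the colimit statement.

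\emph{Representables and filtered colimits of cohomological functors are cohomological.} First I would note that each $H_X=\Hom_\sfT(-,X)$ is cohomological: applying $\Hom_\sfT(-,X)$ to an exact triangle yields the usual long exact sequence, so in particular the three-term sequence of abelian groups is exact. Next, since filtered colimits are exact in $\Ab$, the pointwise filtered colimit of a diagram of cohomological functors remains cohomological. This already gives the inclusion ``filtered colimits of representables $\subseteq$ cohomological''.

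\emph{Every cohomological functor is a filtered colimit of representables.} I would use the canonical presentation $F\cong\colim_{H_X\to F}H_X$ from \eqref{eq:slice} and show that when $F$ is cohomological the slice category $\sfT/F$ is filtered. The category is non-empty since $H_0\to F$ exists. Given two objects $H_X\xra{\phi}F$ and $H_Y\xra{\psi}F$, the coproduct $X\oplus Y$ together with the map $H_{X\oplus Y}\cong H_X\oplus H_Y\xra{(\phi,\psi)}F$ dominates both via the coproduct inclusions, which gives the amalgamation axiom. The key step, where cohomologicality is used, is coequalizing parallel arrows: given $\alpha,\beta\col X\to X'$ with $\phi'H_\alpha=\phi=\phi'H_\beta$, complete $\alpha-\beta$ to an exact triangle
\[
X\xra{\alpha-\beta}X'\xra{\gamma}X''\lto\Si X.
\]
Applying the cohomological functor $F$ produces the exact sequence $F(X'')\xra{F(\gamma)}F(X')\xra{F(\alpha-\beta)}F(X)$. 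The element $\phi'\in F(X')$ (via Yoneda) satisfies $F(\alpha-\beta)(\phi')=\phi'H_{\alpha-\beta}=0$, so it lifts to some $\psi\in F(X'')$, i.e.\ a map $\psi\col H_{X''}\to F$ with $\psi H_\gamma=\phi'$. Then $\gamma\col X'\to X''$ is a morphism in $\sfT/F$ from $\phi'$ to $\psi$, and $\gamma\alpha=\gamma\beta$ by the triangle. This shows $\sfT/F$ is filtered.

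\emph{Consequence for $\Coh\sfT$.} Since filtered colimits in $\Mod\sfT$ are pointwise and, by the first step, preserve the subcategory $\Coh\sfT$, the inclusion $\Coh\sfT\hookrightarrow\Mod\sfT$ creates filtered colimits; in particular $\Coh\sfT$ admits them. The main obstacle is the filteredness check above, specifically the coequalization step: this is exactly where the triangulated structure of $\sfT$ interacts with the cohomological property of $F$, and one must be careful that the Yoneda identification $\Hom(H_{X'},F)=F(X')$ turns the condition $\phi'H_{\alpha-\beta}=0$ into $F(\alpha-\beta)(\phi')=0$ so that the exact sequence from $F$ applied to the triangle can be invoked.
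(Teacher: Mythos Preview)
Your proof is correct and follows exactly the route the paper sketches: show that filtered colimits of representables are cohomological because filtered colimits are exact in $\Ab$, and conversely verify that for cohomological $F$ the slice category $\sfT/F$ is filtered so that the canonical presentation \eqref{eq:slice} exhibits $F$ as a filtered colimit of representables. Your detailed verification of filteredness (non-emptiness, amalgamation via direct sums, coequalization via completing $\alpha-\beta$ to a triangle and using exactness of $F$) is precisely what the paper subsumes under ``one easily checks''.
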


\begin{proof}[Sketch of proof]
  A filtered colimit of exact sequences is again exact. Thus a
  filtered colimit of representable functors is
  cohomological. Conversely, given a cohomological functor $F$, one
  easily checks that the slice category $\sfT/F$ is filtered. Thus
  \eqref{eq:slice} gives a presentation of $F$ as a filtered colimit
  of representable functors.
\end{proof}

We say that a sequence of morphisms in $\Coh\sfT$ is \emph{exact}
provided that evaluation at each object in $\sfT$  yields an
exact sequence in $\Ab$.

\begin{lemma}\label{le:quillen}
  The category $\Coh\sfT$ is an exact category in the sense of
  Quillen; it admits enough projective and enough injective objects.
\end{lemma}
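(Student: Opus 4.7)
The plan is to equip $\Coh\sfT$ with the class of sequences $0\to F_1\to F_2\to F_3\to 0$ of cohomological functors that are exact pointwise (equivalently, exact in the ambient abelian functor category $\Mod\sfT$). The main technical input is that $\Coh\sfT$ is closed in $\Mod\sfT$ not only under extensions, but also under kernels of pointwise epimorphisms between its objects and under cokernels of pointwise monomorphisms between its objects. I would prove all three closure properties simultaneously: given a pointwise short exact sequence $0\to F_1\to F_2\to F_3\to 0$ in $\Mod\sfT$ and a triangle $X\to Y\to Z\to\Si X$ in $\sfT$, rotating the triangle and applying each cohomological $F_i$ yields a long exact sequence in the associated column; combined with the exact rows at the nine positions and the vanishing of consecutive compositions in a triangle, a direct diagram chase shows that if any two of $F_1,F_2,F_3$ are cohomological, so is the third. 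Closure under extensions is the standard criterion for $\Coh\sfT$ to inherit an exact structure from $\Mod\sfT$, and the remaining Quillen axioms follow from the behaviour of pointwise exact sequences under pushout and pullback.

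For enough projectives, the representables $H_X$ are projective in $\Coh\sfT$: by Yoneda's lemma $\Hom(H_X,-)$ is evaluation at $X$, which is exact on pointwise short exact sequences. Arbitrary coproducts of representables lie in $\Coh\sfT$ (a direct sum of long exact sequences of abelian groups is long exact) and remain projective. For any $F\in\Coh\sfT$ the canonical pointwise surjection $P=\bigoplus_{X\in\sfT,\,x\in F(X)} H_X\to F$ has kernel in $\Coh\sfT$ by the closure under kernels above, so this is a conflation with $P$ projective.

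For enough injectives, given $X\in\sfT$ and an injective abelian group $I$, I would introduce $J^I_X\in\Mod\sfT$ defined by $J^I_X(Y)=\Hom_{\bbZ}(\Hom_{\sfT}(X,Y),I)$. Since $\Hom_{\sfT}(X,-)$ is cohomological on $\sfT$ and $\Hom_{\bbZ}(-,I)$ is exact, $J^I_X$ lies in $\Coh\sfT$; expressing any $F$ as a filtered colimit of representables (Lemma~\ref{le:flat}) and using Yoneda yields a natural isomorphism $\Hom(F,J^I_X)\cong\Hom_{\bbZ}(F(X),I)$, so $J^I_X$ is injective in $\Coh\sfT$. For any $F\in\Coh\sfT$, embed each $F(X)$ into an injective abelian group $I_X$ and form $J=\prod_{X\in\sfT} J^{I_X}_X$: this product exists in $\Coh\sfT$ (products of long exact sequences of abelian groups are long exact) and is injective, and the assembled map $F\to J$ is pointwise injective, since reading off the $X=Y$ component at $\alpha=\id_Y$ recovers the embedding $F(Y)\hookrightarrow I_Y$. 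Closure under cokernels then ensures that $F\hookrightarrow J$ is an inflation into an injective.

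The main obstacle will be the uniform diagram chase underlying the three closure properties: one needs to use the rotation of the triangle to import enough exactness into each cohomological column, and to track carefully which column's exactness closes the chase in each of the three cases.
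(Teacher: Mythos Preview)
Your argument is correct and follows the same overall strategy as the paper: view $\Coh\sfT$ as an extension-closed full subcategory of the abelian category $\Mod\sfT$, use representables for projectives, and use duals of corepresentables for injectives. The paper's proof is extremely terse---it only asserts extension-closedness and that $\Coh\sfT$ contains all projectives and all injectives of $\Mod\sfT$ (the latter ``by Yoneda'': for $J$ injective in $\Mod\sfT$ and a triangle $X\to Y\to Z$, the exact sequence $H_X\to H_Y\to H_Z$ is carried by $\Hom(-,J)$ to the exact sequence $J(Z)\to J(Y)\to J(X)$). Your explicit injectives $J^I_X$ are precisely injectives of $\Mod\sfT$, so this is the same idea made concrete.

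The one point where you genuinely add something is the full two-out-of-three closure. The paper records only extension-closedness, but to know that a projective cover $P\twoheadrightarrow F$ in $\Mod\sfT$ is a \emph{deflation} in $\Coh\sfT$ one needs the kernel to be cohomological, and dually for injective hulls; your diagram chase supplies exactly this. So your proposal is a fleshed-out version of the paper's sketch rather than a different route.
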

\begin{proof}
  The cohomological functors form an extension closed subcategory of
  $\Mod\sfT$ containing all projective objects and all injective
  objects. This is clear for the projective objects and follows easily
  from Yoneda's lemma for the injectives.
\end{proof}

\subsection*{Exact functors}

Let $\sfT$ and $\sfU$ be essentially small triangulated categories. A
functor $P\colon\Coh\sfT\to\Coh\sfU$ is said to be \emph{exact} if it
takes exact sequences to exact sequences and if there is a natural
isomorphism $P\comp \Si\xra{\sim}\Si\comp P$.

An exact functor $f\colon\sfT\to\sfU$ induces a pair of functors
\[
f^*\colon\Coh\sfT\lto\Coh\sfU\qquad\text{and}\qquad
f_*\colon\Coh\sfU\lto\Coh\sfT
\] 
where $f^*(F)=\colim_{H_X\to F} H_{f(X)}$ and $f_*(G)=G\comp f$. The
next lemma collects some of their basic properties. Recall that a
\emph{triangulated subcategory} is a full additive subcategory closed
under forming cones and suspensions. The functor $f$ is a
\emph{quotient functor} when it is equivalent to the canonical functor
$\sfT\to\sfT/\sfS$ given by a triangulated subcategory
$\sfS\subseteq\sfT$.

\begin{lemma}
\label{le:exact-fun}
Let $f\colon\sfT\to\sfU$ be an exact functor between essentially small
triangulated categories.
\begin{enumerate} 
\item The functor $f^*$ is a left adjoint of $f_*$.
\item The functors $f^*$ and $f_*$ are exact and preserve filtered
  colimits.
\item If $f$ is fully faithful, then $f^*$ is fully faithful and
  $\Id\xra{\sim}f_*\comp f^*$.
\item If $f$ is a quotient functor, then $f_*$ is fully faithful and
  $f^*\comp f_*\xra{\sim}\Id$.
\end{enumerate} 
\end{lemma}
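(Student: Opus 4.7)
My plan for part (1) is to verify the adjunction directly by exploiting the canonical presentation~\eqref{eq:slice} of any $F\in\Coh\sfT$ as a filtered colimit of representables $H_X$. By Yoneda, $\Hom(H_{f(X)},G)\cong G(f(X))=(f_*G)(X)$, and unpacking the left-hand side as a limit over the slice category $\sfT/F$ identifies $\Hom(f^*F,G)$ with $\lim_{H_X\to F}(f_*G)(X)\cong\Hom(F,f_*G)$; naturality in $F$ and $G$ is formal.

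For part (2), the functor $f_*$ is just restriction along $f$, with $(f_*G)(X)=G(f(X))$, so both exactness and preservation of filtered colimits will come from the pointwise computation of these in $\Coh$ (using Lemma~\ref{le:flat} to know that filtered colimits of cohomological functors remain cohomological). The functor $f^*$, being a left adjoint by (1), automatically preserves all colimits, hence in particular filtered ones. For exactness I would reduce to representables: by construction $f^*(H_X)=H_{f(X)}$, and since $f$ is triangulated this assignment sends short exact sequences of representables to short exact sequences of representables; passing to filtered colimits via~\eqref{eq:slice} and Lemma~\ref{le:flat} gives the general case, because filtered colimits preserve exactness.

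For parts (3) and (4), my plan is to recast them as the statement that the unit $\eta\colon\Id\to f_*f^*$ (respectively the counit $\epsilon\colon f^*f_*\to\Id$) is an isomorphism; full faithfulness of the other functor is then a general consequence of the adjunction. Since both $f^*$ and $f_*$ preserve filtered colimits by (2) and every cohomological functor is a filtered colimit of representables, it will suffice to check $\eta$ (resp.~$\epsilon$) on representables. For (3), full faithfulness of $f$ immediately gives
\[
(f_*H_{f(X)})(X')=\Hom_\sfU(f(X'),f(X))=\Hom_\sfT(X',X)=H_X(X'),
\]
so $\eta_{H_X}$ is an iso.

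The main obstacle will be part (4). Essential surjectivity of the quotient $f\colon\sfT\to\sfT/\sfS$ gives every $Y\in\sfU$ as $f(X)$ for some $X$, but $f_*H_Y=\Hom_\sfU(f(-),Y)$ is not representable in $\Coh\sfT$, so I cannot simply read off the counit. My plan is to compute $f^*f_*H_Y$ as the filtered colimit of $H_{f(X)}$ over the slice $\sfT/f_*H_Y$, whose objects correspond via Yoneda to morphisms $\alpha\colon f(X)\to Y$ in $\sfU$, and then evaluate pointwise: for $Y'\in\sfU$ the comparison map becomes
\[
\colim_{\alpha\colon f(X)\to Y}\Hom_\sfU(Y',f(X))\;\lto\;\Hom_\sfU(Y',Y),\qquad(\alpha,\beta)\mapsto\alpha\comp\beta.
\]
The hard part will be verifying this is bijective, which I expect to invoke Verdier's calculus of fractions for $\sfU=\sfT/\sfS$: surjectivity follows from ess.~surjectivity of $f$ (take $\alpha=\gamma$, $\beta=\id$ after choosing $X$ with $f(X)=Y'$), while injectivity is a cofinality statement about roofs representing a single morphism in the quotient.
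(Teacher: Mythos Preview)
Your arguments for parts (1) and (3) match the paper's proof essentially verbatim: reduce the adjunction and the unit to representables via Yoneda, then extend by filtered colimits.

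For part (2), your sketch for the exactness of $f^*$ is imprecise. The phrase ``short exact sequences of representables'' is odd (a sequence $H_X\to H_Y\to H_Z$ arising from a triangle is exact but rarely short exact), and more seriously, an arbitrary exact sequence in $\Coh\sfT$ is not evidently a filtered colimit of exact sequences of representables, so ``passing to filtered colimits'' does not immediately give the general case. The paper does not spell this out either; it cites \cite[Lemma~2.2]{Krause:2000a}. The argument there works in $\Mod\sfT$: the left Kan extension $f^*$ is right exact, and to check that it preserves kernels one reduces to maps $H_\alpha\colon H_X\to H_Y$ between representables, where the kernel is computed from a completion of $\alpha$ to a triangle and hence is preserved because $f$ is exact.

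For part (4) your approach would work but is needlessly laborious, and the paper takes a much shorter route by reversing the logic. Rather than proving the counit $\epsilon\colon f^*f_*\to\Id$ is invertible and deducing that $f_*$ is fully faithful, the paper first shows $f_*$ is fully faithful directly: this is nothing more than the universal property of the Verdier quotient, since a cohomological functor $G$ on $\sfU=\sfT/\sfS$ is determined by $G\comp f=f_*G$ (see \cite[Lemma~1.2]{Gabriel/Zisman:1967a} or \cite[Chap.~II, Cor.~2.2.11]{Verdier:1997a}). The invertibility of the counit then follows formally from the adjunction, dually to (3). This completely avoids the slice computation and the cofinality-of-roofs argument you anticipate; indeed, that argument would essentially be a re-derivation of the universal property by hand.
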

\begin{proof}
  (1) Given $F\in\Coh\sfT$ and $G\in\Coh\sfU$, we claim that
\[\Hom(f^*F,G)\cong\Hom(F,f_*G).\]
When $F$ is representable this is immediate from Yoneda's lemma. The
general case then follows since $F$ can be written as a colimit of
representable functors.

(2) Clearly, $f_*$ is exact and preserves filtered colimits. A left
adjoint, in particular, $f^{*}$, automatically preserves colimits. The
exactness of $f^*$ follows from the fact that $f$ is exact; see
\cite[Lemma 2.2]{Krause:2000a}.

(3) We use the fact that for any pair $(S,T)$ of adjoint functors, the
left adjoint $S$ is fully faithful iff the unit $\Id\to T\comp S$ is
invertible; see \cite[Proposition~1.3]{Gabriel/Zisman:1967a}. If $f$
is fully faithful, then $F\cong (f_*\comp f^*)( F)$ for any
representable functor $F$, and the general case follows by taking
filtered colimits.

(4) If $f$ is a quotient functor, then $f_*$ is fully faithful; see
\cite[Lemma~1.2]{Gabriel/Zisman:1967a}. Thus the counit $f^*\comp
f_*\to\Id$ is invertible, by the argument dual to the one in (3).
\end{proof}

\begin{remark}
  Observe that when $f^*$ is fully faithful so is $f$, since the
  Yoneda embedding of $\sfT$ into $\Coh\sfT$ is fully
  faithful. However, there are examples where $f_*$ is fully faithful
  but $f$ is not a quotient functor; see
  \cite[Example~7.4]{Krause:2005b}.
\end{remark}

\subsection*{Triangulated subcategories}
Let $\sfS\subseteq \sfT$ be a triangulated subcategory. We write
$i\colon\sfS\to\sfT$ for the inclusion and $q\colon\sfT\to\sfT/\sfS$
for the corresponding quotient functor.  Henceforth we view $\Coh\sfS$
and $\Coh\sfT/\sfS$ as full subcategories of $\Coh\sfT$, via $i^*$ and
$q_*$ respectively. More specifically, there are identifications
\[
\begin{aligned}
  \Coh\sfS&= \{F\in\Coh\sfT\mid F=\colim_\alpha H_{X_\alpha}\text{ with all } X_\alpha\in\sfS\},\\
  \Coh\sfT/\sfS&= \{F\in\Coh\sfT\mid F|_\sfS=0\}.
\end{aligned}
\]
The second identification follows from the universal property of the
quotient $\sfT/\sfS$ \cite[Chap.~II, Cor.~2.2.11]{Verdier:1997a}.
Here is a useful recognition criterion for objects in $\Coh\sfS$.

\begin{lemma}
\label{le:cofinal}
Let $\sfS\subseteq\sfT$ be a triangulated subcategory. Then
$F\in\Coh\sfT$ belongs to $\Coh\sfS$ iff each morphism $H_X\to F$ with
$X\in\sfT$ factors through $H_Y$ for some $Y\in\sfS$.
\end{lemma}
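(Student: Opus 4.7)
The plan is to prove the two implications separately, using the description of $\Coh\sfS$ as the full subcategory of $\Coh\sfT$ consisting of filtered colimits of representables $H_Y$ with $Y\in\sfS$ (the first identification above), together with the fact that representable functors are compact in $\Coh\sfT$.

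For the forward direction, suppose $F\in\Coh\sfS$, so by Lemma~\ref{le:flat} applied to $\sfS$ we may write $F=\colim_\alpha H_{Y_\alpha}$ as a filtered colimit of representables with $Y_\alpha\in\sfS$ (viewed inside $\Coh\sfT$ via $i^*$). Given any morphism $\phi\colon H_X\to F$ with $X\in\sfT$, Yoneda's lemma identifies $\Hom(H_X,-)$ with evaluation at $X$, which commutes with filtered colimits. Hence
\[
\Hom(H_X,F)\;\cong\;F(X)\;\cong\;\colim_\alpha H_{Y_\alpha}(X)\;\cong\;\colim_\alpha\Hom(H_X,H_{Y_\alpha}),
\]
so $\phi$ factors through some $H_{Y_\alpha}$ with $Y_\alpha\in\sfS$, as required.

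For the converse, assume every morphism $H_X\to F$ with $X\in\sfT$ factors through some $H_Y$ with $Y\in\sfS$. Let $\sfS/F$ denote the full subcategory of the slice category $\sfT/F$ consisting of those morphisms $H_Y\to F$ with $Y\in\sfS$. Since $F$ is cohomological, $\sfT/F$ is filtered by Lemma~\ref{le:flat}; the same argument shows $\sfS/F$ is filtered as well, because $\sfS$ is a triangulated subcategory, so cones of morphisms between objects of $\sfS$ stay in $\sfS$. The factorisation hypothesis says precisely that the inclusion $\sfS/F\hookrightarrow\sfT/F$ is cofinal. Applying the canonical presentation \eqref{eq:slice} and passing to the cofinal subcategory yields
\[
F\;\xleftarrow{\sim}\;\colim_{H_X\to F} H_X\;\xleftarrow{\sim}\;\colim_{H_Y\to F,\,Y\in\sfS} H_Y,
\]
exhibiting $F$ as a filtered colimit of representables indexed by objects of $\sfS$. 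By the description of $\Coh\sfS$ recalled above, this places $F$ in $\Coh\sfS$.

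The only real subtlety is the cofinality argument in the converse direction: one must verify that $\sfS/F$ is itself filtered (not just that it sits inside the filtered category $\sfT/F$), which is where the hypothesis that $\sfS$ is triangulated — and in particular closed under cones and suspensions — is used. Given this, cofinality is immediate from the factorisation assumption, and the colimit computation is routine.
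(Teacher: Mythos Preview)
Your proof is correct and takes essentially the same route as the paper: both reduce to cofinality of the inclusion $\sfS/F\hookrightarrow\sfT/F$ in the canonical colimit presentation \eqref{eq:slice} of $F$. The paper simply packages everything into a single appeal to a cofinality lemma from SGA4 (which also delivers filteredness of $\sfS/F$ for free), whereas you unpack the two directions and verify filteredness of $\sfS/F$ directly from the triangulated structure of $\sfS$; these are differences of presentation rather than substance.
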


\begin{proof}
As in \eqref{eq:slice}, we write $F$ as a filtered colimit 
\[
F=\colim_{H_X\to F}H_X.
\] 
Then the assertion is an immediate consequence of the following lemma,
applied with $\sfC'$ and $\sfC$ the slice categories $\sfS/F$ and
$\sfT/F$, respectively.
\end{proof}

\begin{lemma}
  Let $i\colon \sfC'\to \sfC$ be a fully faithful functor with $\sfC$
  a small filtered category.  Suppose that for any $X\in\sfC$ there is
  an object $Y\in\sfC'$ and a morphism $X\to iY$. Then $\sfC'$ is a
  small filtered category, and for any functor $F\colon \sfC\to\sfD$
  into a category which admits filtered colimits, the natural morphism
\[
\colim_{Y\in\sfC'}F(iY)\lto \colim_{X\in\sfC}F(X)
\] 
is an isomorphism.
\end{lemma}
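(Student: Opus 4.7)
The plan is to verify two conditions: first, that $\sfC'$ is itself filtered (it is small because, by full faithfulness, its hom-sets embed into those of the small category $\sfC$, and in the intended application its objects likewise form a set), and second, that $i$ is \emph{final}, in the sense that for every $X\in\sfC$ the slice category $X/i$ is nonempty and connected. Once both are in place, the asserted isomorphism of colimits follows from the standard final-functor theorem for colimits in an arbitrary target category.

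For the filteredness of $\sfC'$, I would use the hypothesis together with filteredness of $\sfC$. Nonemptiness is immediate by picking any $X\in\sfC$ and applying the hypothesis. Given two objects $Y_1,Y_2\in\sfC'$, filteredness of $\sfC$ yields $Z\in\sfC$ with morphisms $iY_1\to Z$ and $iY_2\to Z$; the hypothesis applied to $Z$ provides $Y_3\in\sfC'$ and $Z\to iY_3$, and full faithfulness of $i$ lifts the two composites to morphisms $Y_1\to Y_3$ and $Y_2\to Y_3$ in $\sfC'$. Coequalisation of parallel morphisms $f,g\colon Y_1\to Y_2$ in $\sfC'$ is handled analogously: apply $i$, use filteredness of $\sfC$ to find $h\colon iY_2\to Z$ with $h\comp if=h\comp ig$, then use the hypothesis and full faithfulness to pull the coequalising morphism back into $\sfC'$.

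For finality, the hypothesis gives nonemptiness of $X/i$ directly. The interesting point is connectedness. Given two objects $X\to iY_1$ and $X\to iY_2$ of $X/i$, filteredness of $\sfC$ produces $Z\in\sfC$ together with morphisms $iY_1\to Z$ and $iY_2\to Z$; the two induced composites $X\to Z$ need not agree, but filteredness applied once more yields a morphism $Z\to Z'$ coequalising them. The hypothesis now provides $Y_3\in\sfC'$ and $Z'\to iY_3$, and full faithfulness lifts the resulting morphisms $Y_1\to Y_3$ and $Y_2\to Y_3$ into $\sfC'$ in such a way that the two composites $X\to iY_3$ coincide. This exhibits a length-two zigzag in $X/i$ connecting the two given objects, so $X/i$ is connected.

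The main obstacle is organising the diagram chase for connectedness, in particular inserting the coequalisation step at the right stage of the construction; once that is done, everything else reduces to a routine appeal to filteredness together with the standard cofinality theorem that converts finality of $i$ into the claimed isomorphism of colimits.
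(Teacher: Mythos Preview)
Your argument is correct. You verify that $\sfC'$ is filtered and that $i$ is final (each slice $X/i$ nonempty and connected), then invoke the standard cofinality theorem; the diagram chases are carried out correctly, including the coequalisation step needed for connectedness.

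By way of comparison, the paper does not give its own argument at all: its entire proof is a reference to \cite[Proposition~8.1.3]{Grothendieck/Verdier:1972a}. What you have written is essentially the content of that reference, so your approach is not different in spirit---you have simply unpacked the cited result. One minor remark: smallness of $\sfC'$ does not follow formally from full faithfulness alone (a fully faithful functor can have a large source), but as you note parenthetically, in the application $\sfC'=\sfS/F$ is manifestly small, and in general full faithfulness does guarantee that $\sfC'$ is essentially small, which is all that matters for colimits.
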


\begin{proof}
See \cite[Proposition~8.1.3]{Grothendieck/Verdier:1972a}.
\end{proof}

\subsection*{Localisation}
Let $\sfS\subseteq\sfT$ be a triangulated subcategory. With $i\col
\sfS\to \sfT$ and $q\col \sfT\to \sfT/\sfS$ the canonical functors,
set
\begin{equation}
\label{eq:loc-fun}
\gam=i^*\comp i_*\qquad\text{and}\qquad L=q_*\comp q^*.
\end{equation} 
These are exact functors on $\Coh \sfT$. By definition, for any $F\in \Coh\sfT$ one has
\begin{equation}
\label{eq:gam-F}
\gam F =  \colim_{H_S\to F} \Hom_{\sfT}(-,S)
\end{equation}
where the colimit is taken over the slice category $\sfS/F$, which is
filtered because $\sfS$ is a triangulated subcategory.

\begin{proposition}
\label{pr:loc-seq}
In $\Coh\sfT$ each object $F$ fits into a functorial exact sequence 
\begin{equation}\label{eq:loc-seq}
 \cdots \lto\Si^{-1} (L F)\lto \gam F\lto F\lto LF\lto \Si (\gam F)\lto
 \Si F\lto\cdots 
\end{equation}
Moreover, any exact sequence 
\begin{equation}\label{eq:loc-seq1}
 \cdots \lto \Si^{-1}F''\lto F'\lto F\lto F''\lto \Si F'\lto \Si F\lto\cdots 
\end{equation}
in $\Coh\sfT$ with $F'\in\Coh\sfS$ and $F''\in\Coh\sfT/\sfS$ is
isomorphic to \eqref{eq:loc-seq}.

For $F=\Hom_\sfT(-,X)$, the sequence \eqref{eq:loc-seq} specialises  to  
\begin{multline}
\label{eq:loc-long} 
\cdots\lto \Hom_{\sfT/\sfS}(-,\Si^{-1}X)\lto \colim_{S\to
  X}\Hom_\sfT(-,S)\lto \\ \lto\Hom_\sfT(-,X)\lto
\Hom_{\sfT/\sfS}(-,X)\lto \cdots
\end{multline}
\end{proposition}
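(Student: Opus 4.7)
The plan is to prove \eqref{eq:loc-seq} first for a representable $F = H_X$ via Verdier's calculus of fractions, then extend to arbitrary $F$ by filtered colimits, and finally derive the uniqueness assertion from the exactness of $\gam$ and $L$.

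For $F = H_X$, I would fix $Y \in \sfT$ and work in the filtered slice category $\sfS/X$ whose objects are morphisms $s\colon S \to X$ with $S \in \sfS$—the same category appearing in \eqref{eq:gam-F}. For each such $s$, completing to an exact triangle $S \xra{s} X \to X_s \to \Si S$ and applying $\Hom_\sfT(Y,-)$ produces a long exact sequence in $\Ab$. The assignment $s \mapsto (X \to X_s)$ is cofinal into the category of morphisms $X \to X'$ with cone in $\sfS$, so Verdier's calculus of fractions gives
\[
\colim_{s}\Hom_\sfT(Y, X_s) \;=\; \Hom_{\sfT/\sfS}(Y, X) \;=\; LH_X(Y).
\]
Rotating the triangle to $\Si^{-1}X \to \Si^{-1}X_s \to S \to X$ identifies $\Si^{-1}X_s$ as varying cofinally over morphisms from $\Si^{-1}X$ with cone in $\sfS$, yielding $\colim_s \Hom_\sfT(Y, \Si^{-1}X_s) = LH_{\Si^{-1}X}(Y)$. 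Combined with $\gam H_X(Y) = \colim_s \Hom_\sfT(Y, S)$ from \eqref{eq:gam-F} and the exactness of filtered colimits in $\Ab$, taking $\colim_{\sfS/X}$ of the triangle-induced sequences produces \eqref{eq:loc-long}; the construction is manifestly natural in $X$.

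For arbitrary $F \in \Coh\sfT$, I would use the presentation $F = \colim_{H_X \to F} H_X$ from \eqref{eq:slice}, which is a filtered colimit by Lemma~\ref{le:flat}. Since $\gam$, $L$, and $\Si$ all preserve filtered colimits (Lemma~\ref{le:exact-fun}(2)), and filtered colimits in $\Coh\sfT$ are computed pointwise in $\Ab$ where they preserve exactness, the representable sequences assemble into a filtered diagram whose colimit is the functorial long exact sequence \eqref{eq:loc-seq}. For uniqueness, I would apply the exact functor $\gam$ to \eqref{eq:loc-seq1}. The isomorphism $i_* \comp i^* \xra{\sim} \Id$ of Lemma~\ref{le:exact-fun}(3) gives $\gam|_{\Coh\sfS} \cong \Id$, and $\gam F'' = i^* i_* F'' = 0$ because $i_* F'' = F''|_\sfS = 0$; the sequence collapses to $\cdots \to 0 \to F' \to \gam F \to 0 \to \cdots$, forcing $F' \xra{\sim} \gam F$ compatibly with $F' \to F$ and the counit $\gam F \to F$ by naturality. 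The dual argument with $L$ identifies $LF \xra{\sim} F''$ compatibly with $F \to F''$ and the unit. Together these identify \eqref{eq:loc-seq1} with \eqref{eq:loc-seq}.

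The principal technical point is the representable case: one must check that the connecting morphism $LH_X \to \Si \gam H_X$ arising as the filtered colimit of the connecting maps from the triangles $S \to X \to X_s \to \Si S$ is well-defined—triangle completions are only functorial up to non-unique isomorphism—and matches the canonical map in \eqref{eq:loc-seq}. Verdier's calculus of fractions together with the rotation of triangles to reparametrise the indexing categories is the crucial technical ingredient that resolves this.
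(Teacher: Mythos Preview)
Your proposal is correct and follows essentially the same route as the paper: establish \eqref{eq:loc-long} for representable $F$ via the calculus of fractions, pass to arbitrary $F$ by filtered colimits, and deduce uniqueness by applying $\gam$ and $L$ to \eqref{eq:loc-seq1}. The only cosmetic difference is that the paper indexes $LH_X$ directly by the filtered system of morphisms $X\to Y$ with cone in $\sfS$, rather than by first choosing completions $S\to X\to X_s\to\Si S$ and then invoking cofinality; this sidesteps the well-definedness worry you flag about the connecting map, since no non-canonical triangle completions are ever chosen.
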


\begin{proof} 
  Fix an object $X$ in $\sfT$. Specialising \eqref{eq:gam-F} one gets
  that
\[
\gam\Hom_\sfT(-,X) = \colim_{S\to X}\Hom_\sfT(-,S)
\] 
This functor fits into an exact sequence of cohomological functors of
the form \eqref{eq:loc-long} since one has by definition
\[
\Hom_{\sfT/\sfS}(-,X)=\colim_{X\to Y}\Hom_\sfT(-,Y)
\]
where $X\to Y$ runs through all morphisms with cone in $\sfS$.  The
sequence is functorial and hence yields an exact sequence for each
filtered colimit of representable functors. This justifies
\eqref{eq:loc-seq}.

Given another sequence \eqref{eq:loc-seq1}, we apply the exact functor
$L$ and obtain the following
commuting diagram.
\[
\xymatrix{
 \cdots \ar[r]& F' \ar[r]\ar[d]& F
 \ar[r]\ar[d]& F'' \ar[r]\ar[d]& \Si F' \ar[r]\ar[d]& \cdots \\
\cdots \ar[r]& LF' \ar[r]& LF
 \ar[r]& LF'' \ar[r]& L(\Si F') \ar[r]&\cdots 
}\]
The morphism $F\to F''$ is isomorphic to $F\to LF$,
since $LF'=0=\Si(LF')$ and $F''\xra{\sim}LF''$. Analogously, an
application of $\gam$ shows that $F'\to F$ is isomorphic to $\gam F\to
F$. This yields an isomorphism between \eqref{eq:loc-seq} and
\eqref{eq:loc-seq1}.
\end{proof}

The functor $L$ from \eqref{eq:loc-fun} is a \emph{localisation
  functor}\footnote{The natural morphism $\eta\colon \Id\to L$ has the
  property that $L\eta$ is invertible and $L\eta=\eta L$.}, while
$\gam$ is a \emph{colocalisation functor}\footnote{The natural
  morphism $\theta\colon \gam\to \Id$ has the property that
  $\gam\theta$ is invertible and $\gam\theta=\theta \gam$.}, and the
functorial exact sequence \eqref{eq:loc-seq} is called the
\emph{localisation sequence} for $\sfS\subseteq\sfT$. In the following
we consider the case that one of the natural morphisms $\gam F\to F$
and $F\to LF$ is an isomorphism.

\begin{corollary}\label{co:loc-seq}
\pushQED{\qed}   
Let $\sfS\subseteq\sfT$ be a triangulated subcategory and $F\in\Coh\sfT$. Then
\begin{align*}
F\in\Coh\sfS\quad&\iff \quad \gam F\cong F\quad \iff\quad LF=0, \\
F\in\Coh\sfT/\sfS\quad &\iff \quad F\cong LF\quad \iff\quad \gam F=0. \qedhere
\end{align*} 
\end{corollary}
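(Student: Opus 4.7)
The plan is to deduce the corollary from the uniqueness clause of Proposition~\ref{pr:loc-seq}, using only the structural facts that $\gam F = i^{*}(i_{*}F)$ always lies in $\Coh\sfS$ and $LF = q_{*}(q^{*}F)$ always lies in $\Coh\sfT/\sfS$. In this way all six implications come out at once, with no need for extra input beyond the long exact localisation sequence \eqref{eq:loc-seq}.

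For the forward direction of the first line, suppose $F\in\Coh\sfS$. Then the sequence
\[
\cdots \lto 0\lto F\xra{\id} F\lto 0\lto \Si F\xra{\id} \Si F\lto 0\lto \cdots
\]
is exact (the identity map is both mono and epi, and the zero terms take care of everything else) and has the shape \eqref{eq:loc-seq1} with $F'=F\in\Coh\sfS$ and $F''=0\in\Coh\sfT/\sfS$. By the uniqueness statement in Proposition~\ref{pr:loc-seq}, this sequence is isomorphic to the localisation sequence of $F$, which forces $\gam F\cong F$ and $LF=0$. Dually, if $F\in\Coh\sfT/\sfS$, the analogous sequence shifted by one position, namely
\[
\cdots\lto \Si^{-1}F\xra{\id}\Si^{-1}F\lto 0\lto F\xra{\id} F\lto 0\lto\cdots
\]
(with $F'=0$ and $F''=F$) has the right shape, and the uniqueness argument gives $\gam F=0$ and $F\cong LF$.

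For the reverse direction of the outer equivalences, the inclusions $\gam F\in\Coh\sfS$ and $LF\in\Coh\sfT/\sfS$ noted above immediately yield $\gam F\cong F\Rightarrow F\in\Coh\sfS$ and $F\cong LF\Rightarrow F\in\Coh\sfT/\sfS$. For the middle equivalences $\gam F\cong F\iff LF=0$ and $F\cong LF\iff \gam F=0$, I would just chase \eqref{eq:loc-seq}: if $\gam F\to F$ is an isomorphism, then exactness at $F$ makes $F\to LF$ zero, exactness at $LF$ then makes $LF\to \Si\gam F$ injective, and exactness at $\Si\gam F$ combined with $\Si\gam F\to \Si F$ being an isomorphism makes that injection the zero map, forcing $LF=0$; the other three implications are analogous.

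No real obstacle is anticipated—the corollary is a formal unwinding of the localisation sequence plus its uniqueness. The only point deserving care is verifying exactness of the degenerate identity-map sequences above, but this is entirely mechanical.
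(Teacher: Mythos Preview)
Your proposal is correct and is exactly the kind of argument the paper has in mind: the corollary is stated without proof (just \qedhere), being an immediate consequence of Proposition~\ref{pr:loc-seq}, and your use of the uniqueness clause with the degenerate sequences, together with the fact that $\gam F\in\Coh\sfS$ and $LF\in\Coh\sfT/\sfS$ by construction, is precisely how one unwinds it.
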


A pair $(\sf U,\sfV)$ of full subcategories of an additive category forms a
\emph{torsion pair} provided that the inclusion of $\sfU$ admits a
right adjoint, the inclusion of $\sfV$ admits a left adjoint,
$\sfU=\{X\mid \Hom(X,Y)=0\text{ for all }Y\in\sfV\}$, and $\sfV=\{Y\mid
\Hom(X,Y)=0\text{ for all }X\in\sfU\}$.
 
\begin{corollary}
\label{co:torsion}
Let $\sfS\subseteq\sfT$ be a triangulated subcategory. Then $\Coh\sfS$
and $\Coh\sfT/\sfS$ form a torsion pair in $\Coh\sfT$. Thus
\begin{gather*}
  \Coh\sfT/\sfS=\{F\in\Coh\sfT\mid\Hom(E,F)=0\text{ for all
  }E\in\Coh\sfS\},\\
\Coh\sfS=\{F\in\Coh\sfT\mid\Hom(F,G)=0\text{ for all }G\in\Coh\sfT/\sfS\}.
\end{gather*}
\end{corollary}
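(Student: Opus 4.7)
The plan is to verify the four conditions in the definition of a torsion pair by systematically exploiting the localisation sequence \eqref{eq:loc-seq} from Proposition~\ref{pr:loc-seq}, the identifications in Corollary~\ref{co:loc-seq}, and the adjunctions $i^*\dashv i_*$ and $q^*\dashv q_*$ from Lemma~\ref{le:exact-fun}.

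First I would establish the Hom-vanishing, which simultaneously gives the ``$\subseteq$'' half of both orthogonality statements. Given $E\in\Coh\sfS$ and $G\in\Coh\sfT/\sfS$, write $E=\colim_\alpha H_{X_\alpha}$ with $X_\alpha\in\sfS$, as in the description preceding Lemma~\ref{le:cofinal}. Continuity of $\Hom$ in the first variable combined with Yoneda's lemma yields $\Hom(E,G)=\lim_\alpha G(X_\alpha)$, which vanishes because $G|_\sfS=0$.

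Next I would deduce the adjoint functors required. Applying $\Hom(E,-)$ for $E\in\Coh\sfS$ to the localisation sequence \eqref{eq:loc-seq}, the vanishing just established kills the terms $\Hom(E,\Sigma^{-1}LF)$ and $\Hom(E,LF)$, so that $\Hom(E,\gam F)\xra{\sim}\Hom(E,F)$. Since $\gam F=i^*(i_*F)\in\Coh\sfS$, this realises $\gam$ as a right adjoint to the inclusion $\Coh\sfS\hookrightarrow\Coh\sfT$. A dual application of $\Hom(-,G)$ for $G\in\Coh\sfT/\sfS$ to \eqref{eq:loc-seq} exhibits $L$ as a left adjoint to the inclusion $\Coh\sfT/\sfS\hookrightarrow\Coh\sfT$.

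Finally I would complete the orthogonality characterisations. For the converse of the first one, suppose $\Hom(E,F)=0$ for every $E\in\Coh\sfS$. Taking $E=\gam F$ shows that the counit map $\theta_F\colon\gam F\to F$ of the sequence \eqref{eq:loc-seq} vanishes. Under the adjunction $i^*\dashv i_*$ one has $\Hom(\gam F,F)=\Hom(i^*i_*F,F)\cong\End(i_*F)$, and the counit corresponds to $\id_{i_*F}$; hence $i_*F=0$, so $\gam F=i^*i_*F=0$, and Corollary~\ref{co:loc-seq} gives $F\in\Coh\sfT/\sfS$. The second characterisation follows dually: if $\Hom(F,G)=0$ for every $G\in\Coh\sfT/\sfS$, setting $G=LF$ and using $q^*\dashv q_*$ forces the unit $F\to LF$ to be zero, hence $LF=0$ and $F\in\Coh\sfS$.

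The only genuinely non-formal step is the adjunction identification that turns the vanishing of the counit (respectively unit) into the vanishing of $\gam F$ (respectively $LF$); every other step is bookkeeping with \eqref{eq:loc-seq} and Corollary~\ref{co:loc-seq}.
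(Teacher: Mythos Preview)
Your proof is correct, and your treatment of the second orthogonality via the adjunction $q^*\dashv q_*$ is exactly what the paper does: $\Hom(F,LF)=\Hom(F,q_*q^*F)\cong\End(q^*F)$ forces $q^*F=0$, hence $LF=0$.

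The paper is more direct in two places. First, rather than rederiving the adjoints from the localisation sequence, it simply invokes Lemma~\ref{le:exact-fun}: the inclusion $i^*$ of $\Coh\sfS$ already has right adjoint $i_*$, and the inclusion $q_*$ of $\Coh\sfT/\sfS$ has left adjoint $q^*$. Your route via ``apply $\Hom(E,-)$ to \eqref{eq:loc-seq}'' is not quite automatic, since $\Hom(E,-)$ is only left exact on $\Mod\sfT$; to justify that $\Hom(E,\gam F)\to\Hom(E,F)$ is an isomorphism you still need to observe that the image of $\Si^{-1}LF\to\gam F$ vanishes pointwise on $\sfS$ (as a quotient of $\Si^{-1}LF$, which does), so that $\gam F(X)\to F(X)$ is bijective for each $X\in\sfS$. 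This is easy, but it is essentially the adjunction again.

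Second, for the first orthogonality the paper bypasses the counit argument entirely: $\Hom(E,F)=0$ for all $E\in\Coh\sfS$ is, by Yoneda and the colimit description of $E$, equivalent to $F(X)=0$ for all $X\in\sfS$, i.e.\ to $F|_\sfS=0$; and this is precisely the identification $\Coh\sfT/\sfS=\{F\mid F|_\sfS=0\}$. Your counit trick works too, and has the virtue of being formally parallel to the second case, but the paper's observation is shorter because a concrete description of $\Coh\sfT/\sfS$ is available while none is for $\Coh\sfS$.
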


\begin{proof}
  We have already seen in Lemma~\ref{le:exact-fun} that the inclusion
  of $\Coh\sfS$ admits a right adjoint while the inclusion of
  $\Coh\sfT/\sfS$ admits a left adjoint. For the first equality,
  observe that $\Hom(E,F)=0$ for all $E\in\Coh\sfS$ means that
  $F(X)=0$ for all $X\in\sfS$. This is equivalent to $\gam F=0$, and
  therefore to $F\in \Coh\sfT/\sfS$. For the second equality, it
  remains to show that $\Hom(F,G)=0$ for all $G\in\Coh\sfT/\sfS$
  implies $F\in\Coh\sfS$. The assumption on $F$ implies $LF=0$ since
  $L=q_*\comp q^*$ and \[\Hom(q^*F,q^*F)\cong\Hom(F,(q_*\comp
  q^*)F)=0.\] Thus $F$ belongs to $\Coh\sfS$.
\end{proof}

In general, (co)localisation functors do note commute; see
\cite[Example~3.5]{\bik:2008a}. The following lemma identifies some
conditions under which they do.

\begin{lemma}
\label{le:rules-commute}
Let $\sfS_1\subseteq \sfS_2\subseteq\sfT$ be triangulated
subcategories and let $(\gam_1,L_1)$ and $(\gam_2,L_2)$ be the
corresponding pairs of (co)localisation functors on $\Coh\sfT$. The
morphisms in \eqref{eq:loc-seq} induce isomorphisms
\[
\gam_1\gam_2\cong\gam_1\cong\gam_2\gam_1,\quad L_1L_2\cong L_2\cong
L_2 L_1,\quad \gam_1 L_2=0=L_2\gam_1,\quad \gam_2 L_1\cong L_1\gam_2.
\]
\end{lemma}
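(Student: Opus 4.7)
The plan is to leverage the inclusions $\Coh\sfS_1 \subseteq \Coh\sfS_2$ and $\Coh\sfT/\sfS_2 \subseteq \Coh\sfT/\sfS_1$, which are immediate from $\sfS_1\subseteq\sfS_2$ and the characterisations in Corollary~\ref{co:loc-seq}. Indeed, for any $F\in\Coh\sfT$, the object $\gam_1 F$ lies in $\Coh\sfS_1\subseteq\Coh\sfS_2$, so Corollary~\ref{co:loc-seq} yields $\gam_2\gam_1 F\cong \gam_1 F$ and $L_2\gam_1 F=0$; dually, $L_2 F\in\Coh\sfT/\sfS_2\subseteq\Coh\sfT/\sfS_1$ gives $L_1 L_2 F\cong L_2 F$ and $\gam_1 L_2 F=0$. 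This establishes four of the six relations with no further work.

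Next, to obtain $\gam_1\gam_2\cong\gam_1$ and $L_2 L_1\cong L_2$, I would apply the exact functor $\gam_1$ (respectively $L_2$) to the $\sfS_2$-localisation sequence $\cdots\to\Si^{-1}L_2 F\to\gam_2 F\to F\to L_2 F\to\cdots$ (respectively the $\sfS_1$-localisation sequence for $F$) from Proposition~\ref{pr:loc-seq}. The vanishings $\gam_1 L_2=0$ and $L_2\gam_1=0$ just established make the long exact sequences collapse to the desired isomorphisms $\gam_1\gam_2 F\xra{\sim}\gam_1 F$ and $L_2 F\xra{\sim}L_2 L_1 F$.

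For the remaining isomorphism $\gam_2 L_1\cong L_1\gam_2$, I apply the exact functor $\gam_2$ to the $\sfS_1$-localisation sequence for $F$. Using $\gam_2\gam_1\cong\gam_1$ this produces an exact sequence
\[
\cdots\lto \Si^{-1}\gam_2 L_1 F\lto\gam_1 F\lto\gam_2 F\lto\gam_2 L_1 F\lto \Si\gam_1 F\lto\cdots
\]
By the uniqueness statement in Proposition~\ref{pr:loc-seq}, this will be the $\sfS_1$-localisation sequence for $\gam_2 F$, forcing $L_1\gam_2 F\cong \gam_2 L_1 F$, \emph{provided} one verifies $\gam_2 L_1 F\in\Coh\sfT/\sfS_1$. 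This verification is the main obstacle. To resolve it, I would show the auxiliary fact that for any $G\in\Coh\sfT$ and any $X\in\sfS_2$ one has $(\gam_2 G)(X)\cong G(X)$: since $H_X\in\Coh\sfS_2$ and $L_2 G,\,\Si^{-1}L_2 G\in\Coh\sfT/\sfS_2$, Corollary~\ref{co:torsion} gives $\Hom(H_X,\gam_2 G)\cong\Hom(H_X,G)$, which is the claim by Yoneda. Specialising to $G=L_1 F$ and $X\in\sfS_1\subseteq\sfS_2$, where $(L_1 F)(X)=0$, we conclude $(\gam_2 L_1 F)(X)=0$, so $\gam_2 L_1 F\in\Coh\sfT/\sfS_1$, which completes the proof.
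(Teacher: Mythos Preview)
Your proof is correct and follows the same approach the paper intends: the paper's proof is the single line ``Apply the localisation sequence \eqref{eq:loc-seq}'', and you have carefully spelled out what that entails using Corollary~\ref{co:loc-seq} and the uniqueness clause of Proposition~\ref{pr:loc-seq}. One small simplification in your last step: the auxiliary fact $(\gam_2 G)(X)\cong G(X)$ for $X\in\sfS_2$ is immediate from the identification $\Coh\sfT/\sfS_2=\{F:F|_{\sfS_2}=0\}$, since evaluating the $\sfS_2$-localisation sequence at $X$ kills the $L_2$-terms directly---no need to route through Corollary~\ref{co:torsion}.
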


\begin{proof}
Apply the localisation sequence \eqref{eq:loc-seq}.
\end{proof}

This has the following useful consequence.

\begin{corollary}
\label{cor:detecting-subcategories}
\pushQED{\qed} 
Given thick subcategories $\sfS_1$ and $\sfS_2$ of $\sfT$, one has
\[
\sfS_1\subseteq\sfS_2\qquad\iff\qquad\Coh\sfS_1\subseteq\Coh\sfS_2.\qedhere
\]
\end{corollary}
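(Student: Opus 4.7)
The plan is to establish the two implications separately, with the forward direction being essentially tautological and the reverse direction requiring a retract argument enabled by Lemma~\ref{le:cofinal}.

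For the implication $\sfS_1\subseteq\sfS_2 \Rightarrow \Coh\sfS_1\subseteq\Coh\sfS_2$, I would simply appeal to the description
\[
\Coh\sfS_i = \{F\in\Coh\sfT \mid F=\colim_\alpha H_{X_\alpha}\text{ with all } X_\alpha\in\sfS_i\}
\]
recorded just before Lemma~\ref{le:cofinal}: any presentation of $F$ as a filtered colimit of representables indexed by objects of $\sfS_1$ is \emph{a fortiori} a presentation indexed by objects of $\sfS_2$.

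The content is in the converse. Assuming $\Coh\sfS_1\subseteq\Coh\sfS_2$, I would fix $X\in\sfS_1$ and aim to show $X\in\sfS_2$. The representable $H_X$ lies in $\Coh\sfS_1$, hence in $\Coh\sfS_2$ by hypothesis, so Lemma~\ref{le:cofinal} applies: every morphism $H_Y\to H_X$ with $Y\in\sfT$ factors through $H_Z$ for some $Z\in\sfS_2$. Specialising this to the identity morphism $H_X\to H_X$ produces a factorisation $H_X\to H_Z\to H_X$ whose composite is $\id_{H_X}$. By Yoneda's lemma this corresponds to morphisms $X\to Z\to X$ in $\sfT$ composing to $\id_X$, exhibiting $X$ as a direct summand of $Z\in\sfS_2$.

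The final step is the only point at which the thickness hypothesis enters: since $\sfS_2$ is closed under direct summands, $X\in\sfS_2$. I do not anticipate a genuine obstacle; the only thing to watch is that the argument truly uses thickness of $\sfS_2$ and would fail for a merely triangulated subcategory, which is why the statement is phrased for thick subcategories.
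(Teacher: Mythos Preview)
Your proof is correct. The paper states this corollary with no explicit argument, presenting it as an immediate consequence of the surrounding localisation machinery (Lemma~\ref{le:rules-commute} and Corollary~\ref{co:loc-seq}). The route the paper presumably has in mind for the nontrivial direction is: if $H_X\in\Coh\sfS_2$ then $L_2 H_X=0$ by Corollary~\ref{co:loc-seq}, and since $L_2 H_X=q_*q^*H_X=H_{q(X)}$ for the quotient $q\colon\sfT\to\sfT/\sfS_2$, this forces $q(X)=0$, hence $X\in\sfS_2$ by thickness. Your approach via Lemma~\ref{le:cofinal} and the retract argument is a more hands-on unpacking of exactly the same fact---that a representable functor lying in $\Coh\sfS_2$ forces its representing object to be a summand of something in $\sfS_2$---and has the virtue of not invoking the Verdier quotient explicitly. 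Both arguments are short and rely on thickness in the same way; neither is materially more general than the other.
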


\section{Cohomological localisation}

In this section we introduce cohomological localisation functors for
categories of cohomological functors and explain how to compute these
functors in terms of Koszul objects. These are analogues of results in
\cite[\S\S4--6]{\bik:2008a}.

Let $\sfT$ be an essentially small triangulated category. For objects $X,Y$ in $\sfT$ set
\[
\Hom^*_\sfT(X,Y)=\bigoplus_{n\in\bbZ}\Hom_\sfT(\Si^{-n}X, Y).
\]
More generally, each $F$ in $\Mod\sfT$ induces a functor
$F^*\colon\sfT^{\op}\to\Ab^*$ into the category of graded abelian
groups, with
\[
F^n(X)=F(\Si^{-n}X) \quad\text{for each $n\in\bbZ$}.
\]

\subsection*{Central ring actions}
Let $R$ be a graded commutative ring; thus $R$ is $\bbZ$-graded and
satisfies $rs=(-1)^{|r||s|}sr$ for each pair of homogeneous elements
$r,s$ in $R$.  We say that $\sfT$ is \emph{$R$-linear}, or that $R$
\emph{acts} on $\sfT$, if there is a homomorphism $\phi\col R\to
Z^*(\sfT)$ of graded rings, where
\[
Z^*(\sfT)=\bigoplus_{n\in\bbZ}\{\eta\col\Id_\sfT\to\Si^n\mid\eta\Si=(-1)^n\Si\eta\}
\]
is the graded centre of $\sfT$. For each object $X$ in $\sfT$ this
yields a homomorphism $\phi_X\col R\to\End^*_\sfT(X)$ of graded rings
such that for all objects $X,Y\in\sfT$ the $R$-module structures on
$\Hom^*_\sfT(X,Y)$ induced by $\phi_{X}$ and $\phi_{Y}$ agree up to a
sign. Namely, for any homogeneous elements $r\in R$ and
$\alpha\in\Hom^*_\sfT(X,Y)$, one has
\[
\phi_Y(r)\alpha=(-1)^{|r||\alpha|}\alpha\phi_X(r).
\]

Here are some examples.

\begin{example}
\label{ex:ring-actions}
(1) Any triangulated category admits a canonical action of $\bbZ$.

(2) The derived category of a ring $A$ has a canonical action of the centre of $A$.

(3) If $A$ is an algebra over a field $k$, the derived category of $A$
has a canonical action of the Hochschild cohomology of $A$ over $k$.

(4) Given a finite dimensional Hopf algebra $H$ over a field $k$ (for
example, the group algebra of a finite group), the derived category of
$H$ (and hence also the stable module category) has a canonical action
of the $k$-algebra $\Ext^*_{H}(k,k)$.
\end{example}

We fix an action of $R$ on $\sfT$.  The following observations will be used repeatedly.

\begin{remark}
  The $R$-action on $\sfT$ induces an action on any triangulated
  subcategory $\sfS\subseteq\sfT$ and on the quotient $\sfT/\sfS$,
  compatible with the inclusion and quotient functors,
  respectively. It also extends to an action on $\Mod\sfT$.
\end{remark}

\subsection*{Torsion objects}
The set of homogeneous prime ideals of $R$ is denoted $\Spec R$. For a
homogeneous ideal $\fa$ of $R$ we set
\[
\mcV(\fa)=\{\fp\in\Spec R\mid\fa\subseteq\fp\}.
\]   
Let $\mcV$ be a \emph{specialisation closed} subset of $\Spec R$; this
condition means that if $\mcV$ contains a prime $\fp$, then it
contains everything in $\mcV(\fp)$. An $R$-module $M$ is
\emph{$\mcV$-torsion} if $M_\fp=0$ for each $\fp\in\Spec
R\setminus\mcV$. Note that $M$ is $\mcV(\fa)$-torsion if and only if
each $r\in\fa$ and $x\in M$ satisfy $r^nx=0$ for $n\gg 0$.

A functor $F\in\Coh\sfT$ is \emph{$\mcV$-torsion} if $F^*(X)$ is
$\mcV$-torsion for all $X\in\sfT$. The full subcategory of all
$\mcV$-torsion functors is denoted by $(\Coh\sfT)_\mcV$. Analogously,
an object $Y\in\sfT$ is \emph{$\mcV$-torsion} if $\End_\sfT^*(Y)$ is
$\mcV$-torsion. This means the functor $\Hom_\sfT(-,Y)$ is
$\mcV$-torsion, since for each $X\in\sfT$ the $R$-action on
$\Hom^*_\sfT(X,Y)$ factors through $\End_\sfT^*(Y)$. Set
\[
\sfT_\mcV=\{X\in\sfT\mid \End_\sfT^*(X)_\fp=0\text{ for all
}\fp\in\Spec R\setminus\mcV\}.
\]
Note that $\sfT_\mcV$ is a thick subcategory of $\sfT$.  Recall that
we view $\Coh(\sfT_{\mcV})$ as a full subcategory of $\Coh \sfT$. It
follows from the definitions that there is an inclusion
\begin{equation}
\label{eq:coh-tor-is-tor-coh}
\Coh(\sfT_{\mcV})\subseteq (\Coh \sfT)_{\mcV}.
\end{equation}
Equality holds when $R$ is noetherian; see
Corollary~\ref{co:coh-tor-is-tor-coh}, and also
Propositions~\ref{pr:loc-prime} and \ref{pr:loc-closed}. Following the
definition in \eqref{eq:loc-fun}, the inclusion
$\sfT_\mcV\subseteq\sfT$ induces functors
\[
\gam_\mcV, L_\mcV\colon \Coh\sfT\lto\Coh\sfT,
\]
where $\gam_{\mcV}$ is a colocalisation functor and $L_{\mcV}$ is a
localisation functor. Note that these functors are exact and preserve
filtered colimits.

\subsection*{Inverting central elements}

Given a homogeneous element $r\in R$ of degree $d$ and $X\in\sfT$, we
write $\kos{X}{r}$ for the cone of the morphisms $X\xra{r}
\Si^{d}X$. This definition yields the following exact sequence.
\begin{equation}
\label{eq:koszul-object}
\cdots\lto H_X\xra{\ \pm r\ } H_{\Si^d X}\lto H_{\kos{X}{r}}
\lto H_{\Si X}\xra{\ \pm r\ } H_{\Si^{d+1} X}\lto \cdots
\end{equation}
In particular, inverting $r$ in $\sfT$ is equivalent to annihilating
$\kos Xr$ for all $X\in \sfT$.

Let $\Phi$ be a multiplicatively closed set of homogeneous elements in
$R$. The following lemma describes the quotient functor for $\sfT$
that inverts the elements of $\Phi$. We consider the specialisation
closed set
\[
\mcZ(\Phi)=\{\fp\in\Spec R\mid \fp\cap\Phi\neq\varnothing\}.
\]
Given an $R$-module $M$, we write $M[\Phi^{-1}]$ for the localisation
of $M$ with respect to $\Phi$. Note that $M$ is $\mcZ(\Phi)$-torsion
iff $M[\Phi^{-1}]=0$.

The following lemma is a variation of known results; see for instance
\cite[Theorem~3.6]{Balmer:2010a} or
\cite[Theorem~3.3.7]{Hovey/Palmieri/Strickland:1997a}.

\begin{lemma}
\label{le:loc-prime} 
Let $\Phi$ be a multiplicatively closed set of homogeneous elements in
$R$ and $\sfT'\subseteq\sfT$ a subcategory satisfying
$\Thick(\sfT')=\sfT$. Then there is an equality
\[
\sfT_{\mcZ(\Phi)}=\Thick(\{\kos{X}{r}\mid X\in\sfT',\,r\in\Phi\}),
\] 
and the quotient functor $\sfT\to\sfT/\sfT_{\mcZ(\Phi)}$ induces a natural isomorphism 
\[
\Hom^*_\sfT(X,Y)[\Phi^{-1}]\xra{\sim}\Hom^*_{\sfT/\sfT_{\mcZ(\Phi)}}(X,Y)
\]
for all objects $X,Y$ in $\sfT$.
\end{lemma}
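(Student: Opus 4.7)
The plan is to prove the first assertion and then deduce the second. Write $\sfC := \Thick(\{\kos X r : X \in \sfT',\, r \in \Phi\})$. For the inclusion $\sfC \subseteq \sfT_{\mcZ(\Phi)}$ I verify that $r^{2}\cdot\id_{\kos X r} = 0$ for every generator. From the defining triangle $X \xra{r} \Si^{d} X \xra{\iota} \kos X r \xra{\partial} \Si X$, exactness yields $\iota\comp r = 0$ and $\Si r\comp \partial = 0$; centrality of $r$ then gives $\phi_{\kos X r}(r)\comp \iota = 0$ and $\Si^{d}\partial\comp \phi_{\kos X r}(r) = 0$, so $\phi_{\kos X r}(r)$ factors as $\Si^{d}\iota\comp \tilde h$ for some $\tilde h \colon \kos X r \to \Si^{2d} X$. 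Squaring and using $\phi_{\kos X r}(r)\comp\iota = 0$ makes $\phi_{\kos X r}(r)^{2} = 0$, and since $r^{2}\in\Phi$ this forces $\End^{*}_{\sfT}(\kos X r)[\Phi^{-1}] = 0$; thickness of $\sfT_{\mcZ(\Phi)}$ then gives the inclusion.

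For the reverse inclusion, given $Y \in \sfT_{\mcZ(\Phi)}$ the vanishing $\End^{*}_{\sfT}(Y)[\Phi^{-1}] = 0$ produces some $r \in \Phi$ with $\phi_{Y}(r) = 0$; the Koszul triangle then splits and $\Si Y$ is a direct summand of $\kos Y r$, so it suffices to prove $\kos Y r \in \sfC$. I show this by considering $\sfU_{r} := \{Z \in \sfT : \kos Z r \in \sfC\}$. This subcategory contains $\sfT'$ by definition of $\sfC$, and it is thick: for any triangle $A \to B \to C$ one obtains (via the octahedral axiom applied to the morphism $r$) a triangle $\kos A r \to \kos B r \to \kos C r$, and Koszul objects respect direct summands. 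Hence $\sfU_{r} = \Thick(\sfT') = \sfT$, completing (1). This step is the main obstacle, as it is the one where the triangulated structure genuinely enters.

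For (2), part (1) shows that every $\kos X r$ with $X\in\sfT$ and $r\in\Phi$ lies in $\sfT_{\mcZ(\Phi)}$, so the Koszul triangle forces each $r\in\Phi$ to act as an isomorphism on $\Hom^{*}_{\sfT/\sfT_{\mcZ(\Phi)}}(X,Y)$; the universal property of localisation yields the natural map. For injectivity, a morphism $g\in\Hom^{*}_{\sfT}(X,Y)$ that vanishes in the quotient factors as $g = v\comp u$ through some $Z \in \sfT_{\mcZ(\Phi)}$ by the general theory of Verdier quotients, and choosing $r\in\Phi$ with $\phi_{Z}(r) = 0$ yields $r\cdot g = v\comp\phi_{Z}(r)\comp u = 0$, so $g/1 = 0$. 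For surjectivity, represent a morphism in the quotient by a roof $X \xla{s} Z \xra{f} \Si^{n} Y$ with $C := \Cone(s) \in \sfT_{\mcZ(\Phi)}$, pick $r\in\Phi$ with $\phi_{C}(r) = 0$, and lift $\phi_{X}(r)$ along $\Si^{d} s$ to some $h \colon X \to \Si^{d} Z$ (its composite into $\Si^{d} C$ vanishes by centrality). The original roof becomes equivalent, via the common refinement $\Si^{-d} h \colon \Si^{-d}X \to Z$, to the roof $X \xla{\phi_{X}(r)} \Si^{-d}X \xra{f\comp \Si^{-d} h} \Si^{n} Y$, which is the image of $(f\comp\Si^{-d}h)/r \in \Hom^{*}_{\sfT}(X,Y)[\Phi^{-1}]$.
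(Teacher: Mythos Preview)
Your proof is correct, but it takes a genuinely different route from the paper's.

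The paper reverses your order of attack. It sets $\sfS=\Thick(\{\kos{X}{r}\mid X\in\sfT',\,r\in\Phi\})$ and first establishes the isomorphism
\[
\Hom^*_\sfT(X,Y)[\Phi^{-1}]\xra{\ \sim\ }\Hom^*_{\sfT/\sfS}(X,Y)
\]
for all $X,Y$, \emph{before} knowing that $\sfS=\sfT_{\mcZ(\Phi)}$. For this it invokes the long exact sequence~\eqref{eq:loc-long} from Proposition~\ref{pr:loc-seq}, which relates $\Hom^{*}_{\sfT}(-,Y)$ to $\Hom^{*}_{\sfT/\sfS}(-,Y)$ via a colimit over the slice category $\sfS/Y$; after inverting $\Phi$ the obstruction terms vanish by~\eqref{eq:koszul-object}. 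The equality $\sfS=\sfT_{\mcZ(\Phi)}$ then drops out immediately: $X\in\sfT_{\mcZ(\Phi)}$ iff $\End^{*}_{\sfT}(X)[\Phi^{-1}]=0$ iff $X=0$ in $\sfT/\sfS$.

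Your argument avoids the cohomological-functor machinery entirely. You obtain $\sfS=\sfT_{\mcZ(\Phi)}$ directly by the $r^{2}$-annihilation trick and the $3\times 3$ lemma (for thickness of $\sfU_{r}$), and then verify the Hom isomorphism by explicit roof manipulation in the Verdier quotient. This is more elementary and self-contained in the triangulated world; the paper's version is shorter once the framework of \S2 is in place, and illustrates how the localisation sequence~\eqref{eq:loc-seq} does the bookkeeping that your roof calculus performs by hand.
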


\begin{proof}
Set $\sfS=\Thick(\{\kos{X}{r}\mid X\in\sfT',\,r\in\Phi\})$ and  $\sfU=\sfT/\sfS$. We claim: 
\begin{enumerate}
\item If $X$ or $Y$ is in $\sfS$, then $\Hom^*_\sfT(X,Y)[\Phi^{-1}]=0$.
\item For any $X,Y$ in $\sfT$, the natural morphism $\Hom^*_\sfT(X,Y)\to \Hom^*_\sfU(X,Y)$ induces an isomorphism
\[
\Hom^*_\sfT(X,Y)[\Phi^{-1}]\xra{\sim} \Hom^*_\sfU(X,Y).
\] 
\end{enumerate}
Indeed, (1) follows from \eqref{eq:koszul-object}. Given this, it
follows from the exact sequence \eqref{eq:loc-long} that the morphism
in (2) induces an isomorphism
\[
\Hom^*_\sfT(X,Y)[\Phi^{-1}]\xra{\sim} \Hom^*_\sfU(X,Y)[\Phi^{-1}].
\] 
On the other hand, $\Phi$ acts invertibly on $\Hom^*_\sfU(X,Y)$, that is to say,
\[
\Hom^*_\sfU(X,Y)\xra{\sim}\Hom^*_\sfU(X,Y)[\Phi^{-1}].
\] 
It suffices to check this claim for all $Y\in\sfT'$, and then it is
clear from \eqref{eq:koszul-object}. Combining both isomorphisms
yields (2), and completes the proof of the claims.

It remains to observe that $X\in\sfT$ is $\mcZ(\Phi)$-torsion iff
$\End_\sfT^*(X)[\Phi^{-1}]=0$; given (2) above, the latter condition
translates to $X=0$ in $\sfU$. Thus $\sfT_{\mcZ(\Phi)}=\sfS$.
\end{proof}

Let $\Phi$ be a multiplicatively closed set. We define a
functor \[L_\Phi\colon\Coh\sfT\lto\Coh\sfT\] by taking $F$ in
$\Coh\sfT$ to $F[\Phi^{-1}]$ given by $F[\Phi^{-1}]^*(X)=
F^*(X)[\Phi^{-1}]$ for $X\in\sfT$.  It is easy to verify that this is
an exact localisation functor; the corresponding colocalisation
functor is denoted $\gam_\Phi$.

\begin{proposition}
\label{pr:loc-prime}
  There is a natural isomorphism $L_\Phi\xra{\sim} L_{\mcZ(\Phi)}$ and hence
\[
\Coh(\sfT_{\mcZ(\Phi)})=(\Coh \sfT)_{\mcZ(\Phi)}.
\]
\end{proposition}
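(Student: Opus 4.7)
The plan is to establish the natural isomorphism $L_\Phi \xra{\sim} L_{\mcZ(\Phi)}$ first on representable functors, extend by filtered colimits, and then deduce the equality of subcategories using Corollary~\ref{co:loc-seq} together with the elementary fact recorded just before Lemma~\ref{le:loc-prime} that an $R$-module $M$ is $\mcZ(\Phi)$-torsion iff $M[\Phi^{-1}]=0$.

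For the first step, write $q\colon\sfT\to\sfT/\sfT_{\mcZ(\Phi)}$ for the quotient functor, so that by \eqref{eq:loc-fun} one has $L_{\mcZ(\Phi)}=q_*\comp q^*$. By the definition of $q^*$ in Lemma~\ref{le:exact-fun} (and Yoneda), $q^*H_X\cong H_{q(X)}$, and therefore
\[
(L_{\mcZ(\Phi)}H_X)^*(Y)=\Hom^*_{\sfT/\sfT_{\mcZ(\Phi)}}(Y,X).
\]
Lemma~\ref{le:loc-prime}(2) identifies the right hand side naturally with $\Hom^*_\sfT(Y,X)[\Phi^{-1}]=(L_\Phi H_X)^*(Y)$, producing a natural isomorphism $L_\Phi H_X\xra{\sim} L_{\mcZ(\Phi)}H_X$ for every $X\in\sfT$.

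For the second step, I note that both functors commute with filtered colimits: $L_\Phi$ does because localisation of $R$-modules commutes with colimits and colimits in $\Mod\sfT$ are pointwise, while $L_{\mcZ(\Phi)}=q_*\comp q^*$ does by Lemma~\ref{le:exact-fun}(2). Since every $F\in\Coh\sfT$ is a filtered colimit of representables by Lemma~\ref{le:flat}, the isomorphism on representables extends uniquely and naturally to all of $\Coh\sfT$.

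For the consequence, Corollary~\ref{co:loc-seq} gives $F\in\Coh(\sfT_{\mcZ(\Phi)})$ iff $L_{\mcZ(\Phi)}F=0$. By the isomorphism just proved, this is equivalent to $L_\Phi F=0$, that is, $F^*(X)[\Phi^{-1}]=0$ for every $X\in\sfT$, which by the characterisation of $\mcZ(\Phi)$-torsion is exactly the condition $F\in(\Coh\sfT)_{\mcZ(\Phi)}$. The only genuinely non-trivial input is Lemma~\ref{le:loc-prime}(2); once that is in hand the rest is bookkeeping, and I do not expect any obstacle beyond checking naturality carefully on representables.
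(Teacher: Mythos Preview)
Your proof is correct and follows exactly the approach of the paper: invoke Lemma~\ref{le:loc-prime} to get the isomorphism on representables, then pass to filtered colimits. You have simply spelled out the details more fully, including the derivation of the subcategory equality from Corollary~\ref{co:loc-seq}, which the paper leaves implicit in the word ``hence''.
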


\begin{proof}
  Lemma~\ref{le:loc-prime} yields the isomorphism for
  representable functors, and the general case follows since
  $L_\Phi$ and $L_{\mcZ(\Phi)}$ preserve filtered colimits.
\end{proof}

\subsection*{Localisation at a prime ideal}

Let $\fp$ be a homogeneous prime ideal of $R$. Thus $R\setminus\fp$ is
a multiplicatively closed subset and
\[\mcZ(R\setminus\fp)=\{\fq\in\Spec R\mid \fq\not\subseteq\fp\}.\]
Set \[\sfT_{\fp} = \sfT/\sfT_{\mcZ(R\setminus\fp)}\] and let $X_{\fp}$
denote the image of an object $X$ in $\sfT$ under the natural functor
$\sfT\to\sfT_{\fp}$. This quotient category is described in
Lemma~\ref{le:loc-prime}.  Thus for all $X,Y\in\sfT$ there is a
natural isomorphism
\[
\Hom^*_\sfT(X,Y)_\fp\cong \Hom^*_{\sfT_\fp}(X_\fp,Y_\fp).
\]
It follows from Proposition~\ref{pr:loc-prime} that the localisation functor 
\[
\Coh\sfT\lto\Coh\sfT,\quad F\mapsto F_\fp,
\]
defined by $(F_\fp)^*(X)=F^*(X)_\fp$ is isomorphic to
$L_{\mcZ(R\setminus\fp)}$. A functor $F\in\Coh\sfT$ is called
\emph{$\fp$-local} if $F\cong F_\fp$, and $(\Coh\sfT)_\fp$ denotes the
full subcategory formed by all $\fp$-local
functors. Proposition~\ref{pr:loc-prime} yields an identification
\begin{equation}
\label{eq:loc-prime}
\Coh(\sfT_\fp)=(\Coh\sfT)_\fp.
\end{equation}

\subsection*{Koszul objects}

Fix a homogeneous element $r\in R$ of degree $d$.  For each $X$ in
$\sfT$ and each integer $n$ set $X_n=\Si^{nd}X$ and consider the
commuting diagram
\[
\xymatrix{
X\ar@{=}[r]\ar[d]^{r^{}} &X\ar@{=}[r]\ar[d]^{r^{2}} &X\ar@{=}[r]\ar[d]^{r^{3}} &\cdots\\
X_1\ar[d]\ar[r]^r        &X_2\ar[d]\ar[r]^r         &X_3\ar[d]\ar[r]^r         &\cdots\\ \kos
Xr\ar[r]                 &\kos Xr^{2}\ar[r]         &\kos Xr^{3}\ar[r]         &\cdots }
\] 
where each vertical sequence is given by the exact triangle defining
$\kos Xr^{n}$, and the morphisms in the last row are the
(non-canonical) ones induced by the commutativity of the upper
squares.

\begin{lemma}\label{le:gamma_r_colim}
  Let $r\in R$ be a homogeneous element of degree $d$. 
\begin{enumerate}
\item For $F\in\Coh\sfT$, the colimit of the sequence 
\[
F \stackrel{r}\lto\Si^{d} F \stackrel{r}\lto\Si^{2d} F \stackrel{r}\lto\Si^{3d} F \stackrel{r}\lto\cdots
\] 
is naturally isomorphic to $L_{\mcV(r)} F$.
\item For $X\in\sfT$, the colimit of the sequence
  \[
  H_{\kos{\Si^{-1}X}r^1}\lto H_{\kos{\Si^{-1}X}r^2}\lto H_{\kos{\Si^{-1}X}r^3}\lto\cdots
  \] 
  is naturally isomorphic to $\gam_{\mcV(r)}H_X$.
\end{enumerate}
\end{lemma}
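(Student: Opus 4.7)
The plan is to handle part~(1) directly from Proposition~\ref{pr:loc-prime} and to derive part~(2) from part~(1) by matching the colimit against the localisation sequence of Proposition~\ref{pr:loc-seq}.

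For part~(1), take $\Phi=\{r^n\mid n\ge 0\}$. Since a homogeneous prime $\fp$ meets $\Phi$ iff $r\in\fp$, we have $\mcZ(\Phi)=\mcV(r)$, so Proposition~\ref{pr:loc-prime} gives $L_\Phi\cong L_{\mcV(r)}$. Evaluating $C:=\colim(F\xra{r}\Si^d F\xra{r}\Si^{2d}F\xra{r}\cdots)$ at $X\in\sfT$ yields $\colim_n F^{nd}(X)$, which is the degree-zero part of the graded localisation $F^*(X)[\Phi^{-1}]$. The analogous computation in every suspension degree identifies $C$ with $F[\Phi^{-1}]=L_\Phi F$, naturally in $F$.

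For part~(2), set $K_n:=\kos{\Si^{-1}X}{r^n}$; each $K_n$ belongs to $\sfT_{\mcV(r)}$ by Lemma~\ref{le:loc-prime}. Applying the Yoneda embedding to the defining triangle $\Si^{-1}X\xra{r^n}\Si^{nd-1}X\to K_n\to X$ gives a long exact sequence in $\Coh\sfT$, and the commutative ladder displayed just before the lemma (with $X$ replaced by $\Si^{-1}X$) organises these into a directed system whose transitions are identities on $H_{\Si^{-1}X}$ and $H_X$ and multiplication by $r$ on $H_{\Si^{nd-1}X}$ and $H_{\Si^{nd}X}$. Filtered colimits in $\Mod\sfT$ are exact, and part~(1) identifies $\colim_n H_{\Si^{nd-k}X}\cong\Si^{-k}L_{\mcV(r)}H_X$ for every $k\in\bbZ$. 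The colimit of the system thus produces an exact sequence in $\Coh\sfT$ of the three-periodic shape~\eqref{eq:loc-seq1} with $F=H_X$, $F'=\colim_n H_{K_n}$, and $F''=L_{\mcV(r)}H_X$.

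It then remains to verify the hypotheses of the uniqueness clause of Proposition~\ref{pr:loc-seq}. Since $K_n\in\sfT_{\mcV(r)}$ for every $n$ and $\Coh(\sfT_{\mcV(r)})$ is closed under filtered colimits in $\Coh\sfT$ by the description preceding Lemma~\ref{le:cofinal}, we have $F'\in\Coh(\sfT_{\mcV(r)})$; similarly, $F''=L_{\mcV(r)}H_X$ lies in $\Coh(\sfT/\sfT_{\mcV(r)})$ by the very definition of the localisation functor. Proposition~\ref{pr:loc-seq} then identifies the colimit sequence with the localisation sequence for $\sfT_{\mcV(r)}\subseteq\sfT$ applied to $H_X$, giving $\colim_n H_{K_n}\cong\gam_{\mcV(r)}H_X$. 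The main obstacle will be the careful bookkeeping of transition maps needed to recognise the colimit sequence as fitting the three-periodic pattern~\eqref{eq:loc-seq1}, rather than merely a truncated four-term exact sequence.
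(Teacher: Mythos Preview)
Your proposal is correct and follows essentially the same route as the paper. The paper also takes the colimit of the displayed ladder and invokes the uniqueness clause of Proposition~\ref{pr:loc-seq} together with Proposition~\ref{pr:loc-prime}; the only cosmetic difference is that the paper first reduces (1) to representable $F$ and then reads off both (1) and (2) simultaneously from the resulting colimit exact sequence, whereas you verify (1) for arbitrary $F$ directly via the explicit formula for $L_\Phi$ and then feed this into the argument for (2).
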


\begin{proof}
  The colimit construction in (1) yields a functor
  $\Coh\sfT\to\Coh\sfT$; we claim that it is isomorphic to
  $L_{\mcV(r)}$.  It suffices to prove this for representable functors
  as both functors preserve filtered colimits. When $F=H_X$ one has an
  exact sequence
  \[
  \cdots\lto\colim H_{\kos{\Si^{-1}X}r^n}\lto H_X \lto \colim H_{X_n}\lto \colim H_{\kos Xr^n}\lto\cdots
  \] 
  where $r$ acts invertibly on $\colim H_{X_n}$ while $\colim H_{\kos
    Xr^n}$ is $\mcV(r)$-torsion. Thus the sequence is isomorphic to
  the localisation sequence for $\sfT_{\mcV(r)}\subseteq\sfT$, by
  Propositions~\ref{pr:loc-seq} and \ref{pr:loc-prime}.
\end{proof}

Let $\fa$ be a finitely generated homogeneous ideal of $R$ and
$X\in\sfT$. Pick a sequence of elements $r_1,\ldots,r_n$ in $R$ that
generate the ideal $\fa$ and define inductively
\[
X_0=X\qquad\text{and}\qquad X_{i}=\kos {X_{i-1}}{r_i} \text{ for }1\le i\le n.
\] 
We call $X_n$ a \emph{Koszul object} of $X$ with respect to $\fa$, and
denote it $\kos X{\fa}$. This depends on a choice of a sequence of
generators for $\fa$, so our notation is ambiguous. However, there is
the following uniqueness result.

\begin{lemma}
\label{le:kos}
There is an equality
\[
\Thick(\kos X\fa)=\{Y\in\Thick(X)\mid \End_\sfT^*(Y)_\fp=0\text{ for
  all } \fp\not\supseteq\fa\}.
\]
\end{lemma}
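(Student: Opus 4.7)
The plan is to prove the two inclusions separately, the hard direction by induction on the number of generators of $\fa$.

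For the inclusion $\Thick(\kos X\fa) \subseteq \{Y \in \Thick(X)\mid \End_\sfT^*(Y)_\fp = 0 \text{ for all } \fp \not\supseteq \fa\}$, note that the right-hand side is a thick subcategory of $\sfT$, so it suffices to place $\kos X \fa$ in it. Membership in $\Thick(X)$ is immediate from the iterated cone construction. For the support condition, I would show by induction along the iterated construction that, for each generator $r_i$ of $\fa$, some power $r_i^{N}$ annihilates $\End_\sfT^*(\kos X\fa)$. The key base step: applying $\Hom_\sfT(\kos Y r,-)$ to the defining triangle $Y\xra{r}\Si^{|r|}Y\to\kos Y r$ presents $\End_\sfT^*(\kos Y r)$ as an extension of a kernel by a cokernel of multiplication-by-$r$ maps between Hom-groups, both of which are killed by $r$; hence $r^2=0$ in $\End_\sfT^*(\kos Y r)$. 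Propagating through subsequent cones doubles the nilpotency exponent but preserves nilpotence, using the centrality relation $\phi_W(r)\alpha=\pm\alpha\phi_{Y}(r)$. Given $\fp\not\supseteq\fa$, pick $r_i\in\fa\setminus\fp$; then $r_i$ is simultaneously a unit in $R_\fp$ and nilpotent on $\End_\sfT^*(\kos X\fa)_\fp$, which forces $\End_\sfT^*(\kos X\fa)_\fp=0$.

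For the reverse inclusion, I would induct on the number $n$ of generators of $\fa$. The base case $n=1$ is the technical heart. Viewing $\sfS\coloneqq\Thick(X)$ as an essentially small $R$-linear triangulated category in its own right, Lemma~\ref{le:loc-prime} applied to $\sfS$ with $\sfT'=\{X\}$ and $\Phi=\{r_1^k\mid k\geq 0\}$ yields $\sfS_{\mcZ(\Phi)}=\Thick(\{\kos X{r_1^k}\mid k\geq 1\})$. Since $\mcZ(\Phi)=\mcV(r_1)$, the left-hand side is $\Thick(X)\cap\sfT_{\mcV(r_1)}$. It remains to observe that $\Thick(\kos X{r_1^k})=\Thick(\kos X{r_1})$ for every $k\geq 1$, which follows from an octahedral-axiom argument: factoring $r_1^{k+1}$ through $r_1$ and $r_1^k$ produces an exact triangle $\kos X{r_1}\to\kos X{r_1^{k+1}}\to\kos X{r_1^k}$.

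For the inductive step, set $\fa'=(r_1,\dots,r_{n-1})$ and $X'=\kos X{\fa'}$, so that $\kos X\fa=\kos{X'}{r_n}$ by definition. The inductive hypothesis gives $\Thick(X')=\Thick(X)\cap\sfT_{\mcV(\fa')}$, and the base case applied to $X'$ and $r_n$ inside $\Thick(X')$ gives $\Thick(\kos{X'}{r_n})=\Thick(X')\cap\sfT_{\mcV(r_n)}$. Intersecting and using $\mcV(\fa')\cap\mcV(r_n)=\mcV(\fa)$ yields the desired identity. The main obstacle is the base case $n=1$: without Lemma~\ref{le:loc-prime}, identifying $\sfT_{\mcV(r)}\cap\Thick(X)$ directly with $\Thick(\kos X r)$ would require re-running the proof of that lemma, and checking that the octahedral identification of thick subcategories generated by $\kos X{r^k}$ for varying $k$ is essential to match the two formulations.
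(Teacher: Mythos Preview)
Your proposal is correct and follows essentially the same approach as the paper: reduce to the principal case via Lemma~\ref{le:loc-prime} applied to $\Thick(X)$, then iterate. The paper's proof is just two sentences and leaves implicit the step you make explicit, namely that $\Thick(\kos X{r^k})=\Thick(\kos X r)$ via the octahedral axiom; your separate direct nilpotence argument for the forward inclusion is a harmless variation, since that inclusion also falls out of Lemma~\ref{le:loc-prime}.
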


\begin{proof}
  When $\fa$ is generated by a single element the desired statement
  follows from Lemma~\ref{le:loc-prime}, applied to $\sfT=\Thick(X)$.
  An iteration settles the general case.
\end{proof}

\begin{proposition}
\label{pr:loc-closed}
Let $\fa$ be a finitely generated homogeneous ideal of $R$. Then
\[
\sfT_{\mcV(\fa)}=\Thick(\{\kos X\fa\mid
X\in\sfT\})\qquad\text{and}\qquad
\Coh(\sfT_{\mcV(\fa)})=(\Coh\sfT)_{\mcV(\fa)}.
\] 
Moreover, the objects of $\sfT_{\mcV(\fa)}$ are precisely the direct
summands of Koszul objects $\kos{X}{\fb}$ with $X\in\sfT$ and $\fb$ an
ideal of $R$ satisfying $\sqrt\fb=\sqrt \fa$.
\end{proposition}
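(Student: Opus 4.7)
I would prove the three assertions in the order: the first equality, the Koszul description, then the second equality, with Lemma~\ref{le:kos} as the main input throughout. The equality $\sfT_{\mcV(\fa)}=\Thick(\{\kos X\fa\mid X\in\sfT\})$ is immediate from Lemma~\ref{le:kos}: for $\supseteq$, applied with $Y=\kos X\fa$ the lemma places each Koszul object in the thick subcategory $\sfT_{\mcV(\fa)}$; for $\subseteq$, applied with $X=Y\in\sfT_{\mcV(\fa)}$ it yields $Y\in\Thick(\kos Y\fa)$.

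For the Koszul description, $(\Leftarrow)$ follows because $\End^*(Y)$ is a direct summand of $\End^*(\kos W\fb)$, which is $\mcV(\fb)$-torsion by Lemma~\ref{le:kos}; since $\sqrt\fb=\sqrt\fa$ implies $\mcV(\fb)=\mcV(\fa)$, one gets $Y\in\sfT_{\mcV(\fa)}$. For $(\Rightarrow)$, given $Y\in\sfT_{\mcV(\fa)}$, the identity $\id_Y\in\End^0(Y)$ is $\mcV(\fa)$-torsion, so $\fa^N\cdot\id_Y=0$ for some $N$. Set $\fb=\fa^N$ with the generating set of $N$-fold products $r_{i_1}\cdots r_{i_N}$ in the chosen generators $r_1,\ldots,r_n$ of $\fa$; then $\sqrt\fb=\sqrt\fa$, and each generator $s$ of $\fb$ lies in $\fa^N$, so $s\cdot\id_Y=0$ and the morphism $Y\xra{s}\Si^{|s|}Y$ vanishes. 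The cone of a zero morphism splits, giving $\kos Y s\cong\Si^{|s|}Y\oplus\Si Y$; because $s$ also annihilates the identity of every shift of $Y$, this splitting is preserved at every stage of the iterative construction, and $\kos Y\fb$ emerges as a direct sum of shifts of $Y$, exhibiting $Y$ as a direct summand.

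For the second equality, the inclusion $\Coh(\sfT_{\mcV(\fa)})\subseteq(\Coh\sfT)_{\mcV(\fa)}$ is \eqref{eq:coh-tor-is-tor-coh}. For the reverse, take $F\in(\Coh\sfT)_{\mcV(\fa)}$; by Lemma~\ref{le:cofinal} it suffices to factor every morphism $\varphi\colon H_X\to F$ through some $H_Y$ with $Y\in\sfT_{\mcV(\fa)}$. The element $x\in F(X)$ corresponding to $\varphi$ is $\mcV(r_1)$-torsion, so $r_1^{M_1}\cdot x=0$ for some $M_1$. Applying $F$ to the rotated defining triangle
\[
\Si^{-M_1 d_1}X\xra{r_1^{M_1}}X\xra{\kappa}Y_1\lto\Si^{-M_1 d_1+1}X
\]
with $Y_1=\Si^{-M_1 d_1}\kos X{r_1^{M_1}}$ yields an exact sequence in which $F(r_1^{M_1})\colon F(X)\to F(\Si^{-M_1 d_1}X)$ coincides with the $R$-action of $r_1^{M_1}$ on $F^0(X)$; its vanishing on $x$ lifts $x$ to $\tilde x_1\in F(Y_1)$, giving the factorisation $\varphi=\tilde\varphi_1\circ H_\kappa$ through $H_{Y_1}$. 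Iterating with $r_2,\ldots,r_n$ (at each stage the lifted element remains $\mcV(r_{i+1})$-torsion because $F^*(Y_i)$ is $\mcV(\fa)$-torsion) one reaches a factorisation through $H_{Y_n}$ with $Y_n$ a shift of the iterated Koszul $\kos X{r_1^{M_1},\ldots,r_n^{M_n}}$. Since the generating ideal has radical $\sqrt\fa$, Lemma~\ref{le:kos} places $Y_n\in\sfT_{\mcV(\fa)}$. The main delicacy is this iterative lifting: consistently identifying the connecting maps in the cohomological long exact sequences with the $R$-action on $F^*$, which relies on the compatibility of the induced actions $\phi_X$ and $\phi_Y$ recorded in the subsection on central ring actions.
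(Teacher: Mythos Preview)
Your proof is correct, and the key technical step---the iterative factorisation of a morphism $H_X\to F$ through Koszul objects via Lemma~\ref{le:cofinal}---is exactly the argument the paper uses. The organisation differs, however. The paper proves a single chain of inclusions
\[
\Coh(\sfT_{\mcV(\fa)})\subseteq (\Coh\sfT)_{\mcV(\fa)}\subseteq \Coh\sfS \subseteq\Coh(\sfT_{\mcV(\fa)}),
\]
with $\sfS=\Thick(\{\kos X\fa\})$, and reads off all three assertions from that: the first equality via Corollary~\ref{cor:detecting-subcategories}, and the Koszul description by specialising the factorisation argument to $F=H_Y$ and $\phi=\id_Y$, which exhibits $Y$ as a retract of the resulting Koszul object. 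Your direct proof of the first equality from Lemma~\ref{le:kos} is a genuine simplification, since it avoids passing through $\Coh\sfT$ altogether. On the other hand, your separate argument for the Koszul description---annihilating $\id_Y$ by $\fa^N$ and iteratively splitting cones---is correct but redundant: it is precisely your own factorisation argument from the third part, applied with $F=H_Y$ and $\phi=\id_Y$, so you could have deferred it and saved the work.
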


\begin{proof}
 Set $\sfS=\Thick(\{\kos X\fa\mid X\in\sfT\})$. It suffices to show that 
 \[
 \Coh(\sfT_{\mcV(\fa)})\subseteq (\Coh\sfT)_{\mcV(\fa)}\subseteq
 \Coh\sfS \subseteq\Coh(\sfT_{\mcV(\fa)}).
 \] 
 From this the first part of the assertion follows. The fact that all
 objects in $\sfT_{\mcV(\fa)}$ are direct summands of Koszul objects
 follows from the proof.
 
 The first inclusion is by definition and the last follows from
 Lemma~\ref{le:kos}. To verify the inclusion in the middle, it
 suffices to show that for any $F\in (\Coh\sfT)_{\mcV(\fa)}$ each
 morphism $\phi\colon H_X\to F$ with $X$ in $\sfT$ factors through a
 morphism $H_X\to H_Y$ with $Y$ in $\sfS$; see
 Lemma~\ref{le:cofinal}. To this end, let $r_{1},\dots,r_{n}$ be a
 sequence of elements that generate the ideal $\fa$. Starting with
 $X_{0}=X$ and $\phi_{0}=\phi$, we construct factorisations
\[
\phi_{i-1}\colon H_{X_{i-1}}\lto H_{X_{i}}\xra{\phi_{i}} F
\] 
for $i=1,2,\ldots, n$.  The assumption on $F$ implies that each
$\phi_{i-1}$ is annihilated by $r_i^{\alpha_i}$, for some
$\alpha_{i}\ge 1$. Thus we set
\[
X_{i}=\Si^{-\alpha_i|r_i|}\kos {X_{i-1}}{r_i^{\alpha_i}}.
\] 
The object $Y=X_{n}$ is the desired object; it belongs to $\sfS$ by
Lemma~\ref{le:kos}.
\end{proof}

\subsection*{Composition laws}

We show that cohomological localisation and colocalisation functors
commute; see Lemma~\ref{le:rules-commute} for related commutation rules.

\begin{lemma}\label{le:commute}
  Let $\Phi$ and $\Psi$ be multiplicatively closed sets of homogeneous
  elements in $R$. Then \[ L_\Phi\comp L_\Psi\cong L_\Psi\comp L_\Phi,
  \qquad L_\Phi\comp \gam_\Psi\cong \gam_\Psi\comp L_\Phi,
  \qquad \gam_\Phi\comp \gam_\Psi\cong \gam_\Psi\comp \gam_\Phi.\]
\end{lemma}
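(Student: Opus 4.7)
The plan is to prove the three isomorphisms in the order listed, since each feeds into the next. The first, $L_\Phi \comp L_\Psi \cong L_\Psi \comp L_\Phi$, is immediate from the pointwise definition of $L_\Phi$: both composites send $F \in \Coh\sfT$ to the functor $X \mapsto F^*(X)[(\Phi\cup\Psi)^{-1}]$, by commutativity of iterated module localisation.

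For the second isomorphism $L_\Phi \comp \gam_\Psi \cong \gam_\Psi \comp L_\Phi$, I would apply the exact functor $L_\Phi$ to the localisation sequence (Proposition~\ref{pr:loc-seq}) associated to $\sfT_{\mcZ(\Psi)} \subseteq \sfT$ and an arbitrary $F \in \Coh\sfT$, producing a long exact sequence in $\Coh\sfT$ with $L_\Phi F$ in the middle and outer terms $L_\Phi\gam_\Psi F$ and $L_\Phi L_\Psi F$. I would then verify that these outer terms lie in the correct subcategories: $L_\Phi\gam_\Psi F$ is $\mcZ(\Psi)$-torsion, since $\gam_\Psi F$ is torsion by Proposition~\ref{pr:loc-prime} and localising a torsion module preserves torsion; and $L_\Phi L_\Psi F \cong L_\Psi L_\Phi F$ by the first isomorphism, hence is $\Psi$-local and lies in $\Coh(\sfT/\sfT_{\mcZ(\Psi)})$ by Corollary~\ref{co:loc-seq}. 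The uniqueness clause of Proposition~\ref{pr:loc-seq} then identifies this sequence with the localisation sequence of $L_\Phi F$ at $\sfT_{\mcZ(\Psi)}$, delivering $L_\Phi\gam_\Psi F \cong \gam_\Psi L_\Phi F$.

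The third isomorphism $\gam_\Phi \comp \gam_\Psi \cong \gam_\Psi \comp \gam_\Phi$ follows by the same template: apply the exact functor $\gam_\Phi$ to the localisation sequence of $F$ at $\sfT_{\mcZ(\Psi)}$, use the second isomorphism to identify the right-hand outer term $\gam_\Phi L_\Psi F$ with $L_\Psi \gam_\Phi F$ (which is $\Psi$-local), verify that the left-hand outer term $\gam_\Phi \gam_\Psi F$ is $\mcZ(\Psi)$-torsion, and invoke Proposition~\ref{pr:loc-seq} once more against the localisation sequence of $\gam_\Phi F$ at $\sfT_{\mcZ(\Psi)}$.

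The only non-routine verification, and hence the main obstacle, is showing that $\gam_\Phi \gam_\Psi F$ is $\mcZ(\Psi)$-torsion in the third step, i.e.\ that $\gam_\Phi$ preserves $\Psi$-torsion functors. I plan to argue this using that $\Psi$-torsion $R$-modules form a Serre subcategory of all $R$-modules: evaluating the exact sequence $\gam_\Phi G \to G \to L_\Phi G$ at each $X \in \sfT$ with $G = \gam_\Psi F$ gives a long exact sequence in $\Ab$ whose terms coming from $G$ and $L_\Phi G$ are $\Psi$-torsion (the former by hypothesis, the latter because localisation preserves torsion), so the Serre property forces the intermediate $\gam_\Phi G$ to be $\Psi$-torsion as well.
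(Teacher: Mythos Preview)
Your proposal is correct, and it follows the same overall architecture as the paper's proof: the first isomorphism is immediate from commutativity of module localisation, and each subsequent isomorphism is obtained by applying one functor to the localisation sequence of the other and then comparing. The difference lies only in how the comparison is made. The paper sets up a zig-zag
\[
L_\Phi\mathbb{E}_\Psi \lto L_\Phi\mathbb{E}_\Psi L_\Phi \longleftarrow \mathbb{E}_\Psi L_\Phi
\]
of exact sequences (where $\mathbb{E}_\Psi$ is the localisation sequence for $\Psi$) and uses the five lemma, checking only that certain unit maps are isomorphisms via idempotency of $L_\Phi$ and the first identity. You instead invoke the uniqueness clause of Proposition~\ref{pr:loc-seq} directly, which forces you to verify membership in $\Coh\sfT_{\mcZ(\Psi)}$ and $\Coh\sfT/\sfT_{\mcZ(\Psi)}$; this is where your extra module-theoretic checks (torsion is preserved under $\Phi$-localisation, and the Serre argument for $\gam_\Phi\gam_\Psi F$) come in. Both routes are clean; the paper's avoids the torsion verifications at the cost of a slightly more elaborate diagram, while yours trades that diagram for a direct appeal to the characterisation of the localisation sequence.
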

\begin{proof}
  The first isomorphism is clear since localising $R$-modules with
  respect to $\Phi$ and $\Psi$ commutes.  We consider the exact
  localisation sequence
\[\xymatrix{
\mathbb E_\Psi\colon&\cdots\ar[r]&\gam_\Psi\ar[r]&\Id\ar[r]&L_\Psi\ar[r]&\cdots
}\]
and the pair of morphisms \[(L_\Phi \mathbb E_\Psi)\lto (L_\Phi
\mathbb E_\Psi) L_\Phi=L_\Phi (\mathbb E_\Psi L_\Phi)\longleftarrow
(\mathbb E_\Psi L_\Phi).\] This yields the following commutative diagram
with exact rows.
\[\xymatrix{
\cdots\ar[r]&L_\Phi\gam_\Psi\ar[r]\ar[d]&L_\Phi\ar[r]\ar[d]&L_\Phi L_\Psi\ar[r]\ar[d]&\cdots\\
\cdots\ar[r]&L_\Phi\gam_\Psi L_\Phi\ar[r]&L_\Phi L_\Phi\ar[r]&L_\Phi L_\Psi L_\Phi\ar[r]&\cdots\\
\cdots\ar[r]&\gam_\Psi L_\Phi\ar[r]\ar[u]&L_\Phi\ar[r]\ar[u]&L_\Psi L_\Phi\ar[r]\ar[u]&\cdots
}\]
In two of three columns the vertical morphisms are isomorphisms. Thus
the five lemma implies that $L_\Phi\gam_\Psi\cong \gam_\Psi L_\Phi$. A
similar argument based on the pair of morphisms $\mathbb
E_\Psi\gam_\Phi\leftarrow \gam_\Phi \mathbb E_\Psi \gam_\Phi\to
\gam_\Phi\mathbb E_\Psi$ is used to deduce the third isomorphism
$\gam_\Phi\gam_\Psi\cong \gam_\Psi \gam_\Phi$ from the second.
\end{proof}

Next observe for a multiplicatively closed set $\Phi=\{r^i\mid
i\in\bbN \}$ that $\mcZ(\Phi)=\mcV(r)$ and so
$\gam_{\Phi}\cong\gam_{\mcV(r)}$ by Proposition~\ref{pr:loc-prime}.

\begin{lemma}\label{le:composite}
  Let $\fa$ and $\fb$ be finitely generated homogeneous ideals of $R$. Then 
  \[
 \gam_{\mcV(\fa)}\comp \gam_{\mcV(\fb)}\cong \gam_{\mcV(\fa)\cap\mcV(\fb)}.
 \]
\end{lemma}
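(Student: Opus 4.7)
The plan is to identify $\gam_{\mcV(\fa)}\comp\gam_{\mcV(\fb)}$ as the right adjoint of the inclusion $\Coh(\sfT_{\mcV(\fa+\fb)})\hookrightarrow\Coh\sfT$, and then conclude by uniqueness of right adjoints. The starting observation is that $\mcV(\fa)\cap\mcV(\fb)=\mcV(\fa+\fb)$ as subsets of $\Spec R$. Combined with the identification $\Coh(\sfT_{\mcV(\fc)})=(\Coh\sfT)_{\mcV(\fc)}$ from Proposition~\ref{pr:loc-closed}, applied to $\fa$, $\fb$, and $\fa+\fb$, this yields
\[\Coh(\sfT_{\mcV(\fa)})\cap\Coh(\sfT_{\mcV(\fb)})=\Coh(\sfT_{\mcV(\fa+\fb)})\]
as subcategories of $\Coh\sfT$ (a pointwise statement: a module is $\mcV(\fa)$- and $\mcV(\fb)$-torsion precisely when its support lies in $\mcV(\fa)\cap\mcV(\fb)$).

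The main technical step is then to verify that $\gam_{\mcV(\fa)}\gam_{\mcV(\fb)}F$ lies in this intersection for every $F$. Membership in $\Coh(\sfT_{\mcV(\fa)})$ is automatic, so the task reduces to showing that $\gam_{\mcV(\fa)}$ preserves the subcategory $(\Coh\sfT)_{\mcV(\fb)}$. The key arithmetic input is that $X\in\sfT_{\mcV(\fb)}$ forces $\fb^{n}\cdot\id_X=0$ in $\End_\sfT^*(X)$ for some $n$: the condition $\End_\sfT^*(X)_{\fp}=0$ for $\fp\not\supseteq\fb$ gives $\mcV(\Ann_R(\id_X))\subseteq\mcV(\fb)$, hence $\sqrt{\Ann_R(\id_X)}\supseteq\fb$, and $\fb$ is finitely generated. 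Under the quotient $q\colon\sfT\to\sfT/\sfT_{\mcV(\fa)}$ the same relation $\fb^n\cdot\id_{qX}=0$ holds, so $\fb^n$ annihilates $\Hom^*(qY,qX)$ for every $Y$; hence $L_{\mcV(\fa)}H_X$ is pointwise $\mcV(\fb)$-torsion whenever $X\in\sfT_{\mcV(\fb)}$. Since $L_{\mcV(\fa)}$ preserves filtered colimits and $(\Coh\sfT)_{\mcV(\fb)}$ is closed under them, $L_{\mcV(\fa)}$ maps $(\Coh\sfT)_{\mcV(\fb)}$ into itself. The pointwise exactness of the localisation sequence~\eqref{eq:loc-seq}, combined with the closure of $\mcV(\fb)$-torsion modules under subquotients and extensions, then shows $\gam_{\mcV(\fa)}$ also preserves $(\Coh\sfT)_{\mcV(\fb)}$.

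With these in place, the universal property is immediate: for $E\in\Coh(\sfT_{\mcV(\fa+\fb)})$ the containments $E\in\Coh(\sfT_{\mcV(\fa)})$ and $E\in\Coh(\sfT_{\mcV(\fb)})$, combined with the adjunction of Lemma~\ref{le:exact-fun}(1), give
\[\Hom(E,\gam_{\mcV(\fa)}\gam_{\mcV(\fb)}F)\cong\Hom(E,\gam_{\mcV(\fb)}F)\cong\Hom(E,F),\]
exhibiting $\gam_{\mcV(\fa)}\gam_{\mcV(\fb)}$ as the right adjoint of $\Coh(\sfT_{\mcV(\fa+\fb)})\hookrightarrow\Coh\sfT$; by uniqueness of right adjoints it is naturally isomorphic to $\gam_{\mcV(\fa+\fb)}$. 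The main obstacle is the preservation claim for $\gam_{\mcV(\fa)}$ described above; everything else is a formal consequence of the adjoint characterisation.
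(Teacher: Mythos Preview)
Your argument is correct, and it takes a genuinely different route from the paper's. The paper proceeds by induction on the number of generators of $\fa$: writing $\fa=(r_1,\ldots,r_n)$ and $\fa'=(r_1,\ldots,r_{n-1})$, it invokes Lemma~\ref{le:commute} to know that $\gam_{\mcV(r_n)}$ and $\gam_{\mcV(\fa')}$ commute, concludes that the image of $\gam_{\mcV(r_n)}\gam_{\mcV(\fa')}$ lies in $(\Coh\sfT)_{\mcV(r_n)}\cap(\Coh\sfT)_{\mcV(\fa')}=\Coh(\sfT_{\mcV(\fa)})$, and then reads off the isomorphism from the localisation sequence~\eqref{eq:loc-seq}. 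You bypass both the induction and Lemma~\ref{le:commute}: your annihilator argument (that $\fb^n\cdot\id_X=0$ for $X\in\sfT_{\mcV(\fb)}$, hence the same relation holds after passing to $\sfT/\sfT_{\mcV(\fa)}$) shows directly that $L_{\mcV(\fa)}$, and therefore $\gam_{\mcV(\fa)}$, preserves $(\Coh\sfT)_{\mcV(\fb)}$, so the composite lands in $\Coh(\sfT_{\mcV(\fa+\fb)})$ without needing to swap the two factors. The paper's approach is more modular, reducing everything to the principal case and the commutation of $L_\Phi$ with $\gam_\Psi$; yours is more self-contained and makes transparent why finite generation of $\fb$ matters. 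Either way the endgame is the same: once the composite is known to land in the correct subcategory, the identification with $\gam_{\mcV(\fa+\fb)}$ follows from the localisation sequence (as in the paper) or, equivalently, from uniqueness of the right adjoint (as you phrase it).
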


\begin{proof}
It suffices to show for $\fa$ generated by
  homogeneous elements $r_1,\ldots, r_n$ that
\[
\gam_{\mcV(\fa)}=\gam_{\mcV(r_n)}\comp\ldots\comp \gam_{\mcV(r_1)}.
\] We prove this assertion by induction on $n$.  Let
$\fa'=(r_1,\ldots, r_{n-1})$.  We know from Lemma~\ref{le:commute}
that the $\gam_{\mcV(r_i)}$ commute. Thus $\gam_{\mcV(\fa')}
\gam_{\mcV(r_n)}\cong \gam_{\mcV(r_n)} \gam_{\mcV(\fa')}$ by the
induction hypothesis. Using Proposition~\ref{pr:loc-closed}, it
follows that the image of $\gam_{\mcV(r_n)} \gam_{\mcV(\fa')}$ belongs
to
\begin{align*}
\Coh(\sfT_{\mcV(r_n)})\cap\Coh(\sfT_{\mcV(\fa')})&=(\Coh\sfT)_{\mcV(r_n)}\cap (\Coh\sfT)_{\mcV(\fa')}\\
&=(\Coh\sfT)_{\mcV(\fa)}\\
&=\Coh(\sfT_{\mcV(\fa)}).
\end{align*}
Therefore $L_{\mcV(\fa)}(\gam_{\mcV(r_n)} \gam_{\mcV(\fa')})=0$. On
the other hand, $\gam_{\mcV(\fa)}(\gam_{\mcV(r_n)}
\gam_{\mcV(\fa')})=\gam_{\mcV(\fa)}$.  The exact localisation sequence
\eqref{eq:loc-seq} yields the following exact sequence
\[
\cdots\to \gam_{\mcV(\fa)}(\gam_{\mcV(r_n)}
\gam_{\mcV(\fa')}) \to \gam_{\mcV(r_n)} \gam_{\mcV(\fa')}\to
L_{\mcV(\fa)}(\gam_{\mcV(r_n)}
\gam_{\mcV(\fa')})\to\cdots
\]
and therefore $\gam_{\mcV(\fa)}\cong\gam_{\mcV(r_n)}
\gam_{\mcV(\fa')}$.
\end{proof}

\begin{corollary}
\label{co:commute}
Let $\fa$ be a finitely generated homogeneous ideal, and $\fp$ a
homogeneous prime ideal, of $R$.  For each $F\in\Coh\sfT$, there is a
natural isomorphism
\[
  (\gam_{\mcV(\fa)} F)_\fp\cong \gam_{\mcV(\fa)}( F_\fp).
\]
\end{corollary}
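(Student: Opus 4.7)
The plan is to reduce this commutation statement to the commutation rules established in Lemma~\ref{le:commute}, using the decomposition of $\gam_{\mcV(\fa)}$ into one-variable pieces from Lemma~\ref{le:composite}.

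First I would recall that the functor $F \mapsto F_\fp$ is, by the discussion preceding \eqref{eq:loc-prime}, nothing but $L_\Phi$ applied to $F$, where $\Phi = R \setminus \fp$ is a multiplicatively closed set of homogeneous elements. Thus the claim is equivalent to the natural isomorphism $L_\Phi \comp \gam_{\mcV(\fa)} \cong \gam_{\mcV(\fa)} \comp L_\Phi$.

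Next I would choose a finite sequence $r_1, \dots, r_n$ of homogeneous generators for $\fa$. For each $i$, setting $\Phi_i = \{r_i^k \mid k \in \bbN\}$, the remark preceding Lemma~\ref{le:composite} identifies $\gam_{\mcV(r_i)}$ with $\gam_{\Phi_i}$. Then Lemma~\ref{le:composite} provides an isomorphism
\[
\gam_{\mcV(\fa)} \cong \gam_{\Phi_n} \comp \gam_{\Phi_{n-1}} \comp \cdots \comp \gam_{\Phi_1}.
\]
Now by Lemma~\ref{le:commute}, $L_\Phi$ commutes with each $\gam_{\Phi_i}$. Applying these commutations one factor at a time, I would pass $L_\Phi$ through the composition:
\[
L_\Phi \comp \gam_{\Phi_n} \comp \cdots \comp \gam_{\Phi_1} \cong \gam_{\Phi_n} \comp L_\Phi \comp \gam_{\Phi_{n-1}} \comp \cdots \comp \gam_{\Phi_1} \cong \cdots \cong \gam_{\Phi_n} \comp \cdots \comp \gam_{\Phi_1} \comp L_\Phi,
\]
and then re-fold the right-hand side back to $\gam_{\mcV(\fa)} \comp L_\Phi$ using Lemma~\ref{le:composite} again. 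Naturality of all the isomorphisms involved is automatic since they are built from the natural localisation sequence \eqref{eq:loc-seq}.

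There is no serious obstacle here; the statement is a formal consequence of the previously established commutation rules and the reduction of $\gam_{\mcV(\fa)}$ to a composition of one-element colocalisations. The only point worth checking carefully is that the intermediate isomorphism from Lemma~\ref{le:composite} is indeed natural, so that the composite of the natural isomorphisms from Lemma~\ref{le:commute} transports across it; this is clear from the construction in the proof of Lemma~\ref{le:composite}, which only uses the canonical morphisms of the localisation sequence.
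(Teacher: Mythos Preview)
Your proof is correct and is exactly the argument the paper intends: its proof reads ``Apply Lemmas~\ref{le:commute} and \ref{le:composite},'' and you have simply unpacked those two invocations in the right order. There is nothing to add.
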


\begin{proof}
Apply Lemmas~\ref{le:commute} and \ref{le:composite}.
\end{proof}

\section{Support}
In this section, we define the support of a cohomological functor and
establish some useful rules for computing it; the development
parallels the one in \cite[\S5]{\bik:2008a}.

Let $R$ be a graded commutative ring and $\sfT$ be an essentially
small $R$-linear triangulated category. From now on we assume $R$ to
be noetherian.

\subsection*{Support}

For each $F$ in $\Coh\sfT$ and $\fp$ in $\Spec R$ set 
\[
\gam_\fp F=\gam_{\mcV(\fp)}( F_\fp).
\] 
Then $\gam_{\fp}$ is an exact functor on $\Coh\sfT$ that preserves
filtered colimits. The subset
\[
\supp_R F=\{\fp\in\Spec R\mid \gam_\fp F\neq 0\}
\]
is called the \emph{support} of $F$.

\begin{proposition}
\label{pr:support}
Let $F\in\Coh\sfT$. Then $\supp_R F=\varnothing$ if and only if $F=0$.
\end{proposition}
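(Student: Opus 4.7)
The easy direction is immediate: if $F = 0$ then every $\gam_{\fp} F$ vanishes, so $\supp_{R} F = \varnothing$.

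For the converse I start from $F \neq 0$ and produce $\fp \in \Spec R$ with $\gam_{\fp} F \neq 0$. Pick $X \in \sfT$ with $F^{*}(X) \neq 0$; since $R$ is noetherian the nonzero graded $R$-module $F^{*}(X)$ has an associated prime, so one can choose a homogeneous $\xi \in F^{*}(X)$ with $\fp := \Ann_{R}(\xi) \in \Spec R$, and then $R\xi \cong R/\fp$. Localising at $\fp$, the image $\xi/1 \in F_{\fp}^{*}(X)$ is non-zero (because $\fp$ is the minimal prime in the support of $R/\fp$) and is annihilated by $\fp R_{\fp}$.

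The claim is then $\fp \in \supp_{R} F$, i.e.\ $\gam_{\fp} F = \gam_{\mcV(\fp)}(F_{\fp}) \neq 0$. By Corollary~\ref{co:loc-seq} this reduces to producing some $Y \in \sfT_{\mcV(\fp)}$ with $F_{\fp}^{*}(Y) \neq 0$, and the natural candidate is $Y = \kos{X}{\fp}$, which lies in $\sfT_{\mcV(\fp)}$ by Proposition~\ref{pr:loc-closed}. I would prove $F_{\fp}^{*}(\kos{X}{\fp}) \neq 0$ by induction on the number $d$ of generators of $\fp$, using the iterated Koszul construction $X_{0} = X$, $X_{i} = \kos{X_{i-1}}{r_{i}}$ and the long exact sequences in $F_{\fp}^{*}$ coming from each defining triangle $X_{i-1} \to \Si^{|r_{i}|} X_{i-1} \to X_{i} \to \Si X_{i-1}$. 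The base case $d=1$ is immediate: the Koszul long exact sequence exhibits $F_{\fp}^{*}(X_{1})$ as surjecting onto the $r_{1}$-kernel in $F_{\fp}^{*}(X)$, which contains $\xi/1 \neq 0$.

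The main obstacle is the inductive step: a lift of $\xi/1$ to $F_{\fp}^{*}(X_{1})$ need not remain annihilated by $r_{2}$, so successive lifts cannot be built naively. I would address this using the ambiguity in the Koszul LES lift (which lies in the image of a boundary from $F_{\fp}^{*}(\Si X_{i-1})$) combined with the freedom, afforded by the final assertion of Proposition~\ref{pr:loc-closed}, to replace each generator $r_{i}$ by a sufficiently high power $r_{i}^{n_{i}}$ without leaving the thick subcategory: the Koszul object $\kos{X}{(r_{1}^{n_{1}},\dots,r_{d}^{n_{d}})}$ still belongs to $\sfT_{\mcV(\fp)}$ because its defining ideal has radical $\fp$. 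This flexibility should allow the accumulated obstructions to the lift to be absorbed, yielding a non-zero element of $F_{\fp}^{*}(Y)$ for some such $Y \in \sfT_{\mcV(\fp)}$; Corollary~\ref{co:loc-seq} then gives $\gam_{\fp} F \neq 0$, as required.
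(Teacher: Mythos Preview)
Your inductive step is a genuine gap, not a technicality. After lifting $\xi/1$ to some $\tilde\xi \in F_\fp^*(\kos{X}{r_1})$, the element $r_2\tilde\xi$ maps to $r_2\xi = 0$ in $F_\fp^*(X)$, but nothing forces $r_2\tilde\xi$ itself to vanish. Neither of your proposed remedies resolves this: replacing $r_1$ by a power $r_1^{n}$ gives no new leverage on the $r_2$-action on the lift, and the ambiguity in the lift lies in the image of a boundary map from $F_\fp^*(\Si X)$, on which you have no control over the $r_2$-action either. The factorisation argument in the proof of Proposition~\ref{pr:loc-closed} faces exactly the same issue and succeeds there only because the \emph{target} functor is assumed $\mcV(\fa)$-torsion, so that each successive morphism $\phi_i$ is again annihilated by some power of $r_{i+1}$. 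In your setup only the single element $\xi$ is $\fp$-torsion, not $F_\fp$ as a whole, and that hypothesis does not propagate through the Koszul tower.

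The paper avoids this entirely by a different choice of $\fp$: take $\fp$ \emph{minimal} among primes with $F_\fp \neq 0$. Then for every $\fq \subsetneq \fp$ and every object $X$ one has $F_\fp^*(X)_\fq \cong F^*(X)_\fq = 0$, so each graded module $F_\fp^*(X)$ is $\mcV(\fp)$-torsion --- not merely one element of one of them. Now Proposition~\ref{pr:loc-closed} applies directly: $F_\fp \in (\Coh\sfT)_{\mcV(\fp)} = \Coh(\sfT_{\mcV(\fp)})$, so the natural map $\gam_\fp F = \gam_{\mcV(\fp)}(F_\fp) \to F_\fp$ is an isomorphism, giving $\gam_\fp F \neq 0$. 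No induction on generators is needed.
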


\begin{proof}
  Clearly, $\supp_R F=\varnothing$ when $F=0$.  Suppose $F$ is
  non-zero. Recall that if an $R$-module $M$ is non-zero, then there
  exists a $\fp\in \Spec R$ such that $M_{\fp}\ne 0$. Choose a prime
  $\fp$ that is minimal subject to the condition that $F_\fp\neq
  0$. Then for all primes $\fq$ properly contained in $\fp$ and all
  $X\in\sfT$, one has
 \[
 F_{\fp}(X)_{\fq}\cong F(X)_{\fq}=0.
 \]
 Hence $F_{\fp}(X)$ is $\mcV(\fp)$-torsion, by
 \cite[Theorem~6.5]{Matsumura:1986a}; that is to say, $F_\fp$ is
 $\mcV(\fp)$-torsion. It then follows from
 Proposition~\ref{pr:loc-closed} that $F_{\fp}$ is in
 $\Coh(\sfT_{\mcV(\fp)})$, so the natural map $\gam_\fp F\to F_\fp$ is
 an isomorphism. As $F_{\fp}\neq 0$, one gets that $\fp$ is in
 $\supp_{R}F$.
\end{proof}

We can compute the support of the representable functors as follows.

\begin{proposition}
\label{pr:tria_support}
Let $X$ be an object in $\sfT$. Then
\[
\supp_R H_X\subseteq\{\fp\in\Spec R\mid \End^*_\sfT(X)_\fp\neq 0\},
\]
and equality holds when $\End^*_\sfT(X)$ is finitely generated over
$R$.
\end{proposition}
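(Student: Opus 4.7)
The plan is to handle the two inclusions separately. The stated inclusion $\supp_R H_X\subseteq\{\fp\mid\End^*_\sfT(X)_\fp\ne 0\}$ is essentially Yoneda: I would observe that $\gam_\fp H_X=\gam_{\mcV(\fp)}((H_X)_\fp)\ne 0$ forces $(H_X)_\fp\ne 0$, and then use the identification \eqref{eq:loc-prime} to see that $(H_X)_\fp$ corresponds to the representable $H_{X_\fp}\in\Coh\sfT_\fp$. Yoneda then gives $X_\fp\ne 0$, so the graded ring $\End^*_\sfT(X)_\fp\cong\End^*_{\sfT_\fp}(X_\fp)$ contains a nonzero unit.

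For the converse under finite generation of $\End^*_\sfT(X)$, I would fix a finite generating sequence $r_1,\dots,r_n$ for $\fp$ (available since $R$ is noetherian) and form in $\sfT_\fp$ the iterated Koszul object $K_0=X_\fp$, $K_i=\kos{K_{i-1}}{r_i}$, $K:=K_n=\kos{X_\fp}{\fp}$, which lies in $(\sfT_\fp)_{\mcV(\fp)}$ by Proposition~\ref{pr:loc-closed}. The core of the argument will be to show that $M_n:=\Hom^*_{\sfT_\fp}(K,X_\fp)$ is nonzero. Applying $\Hom^*_{\sfT_\fp}(-,X_\fp)$ to the defining triangles $K_{i-1}\xra{r_i}\Si^{|r_i|}K_{i-1}\to K_i\to\Si K_{i-1}$ produces long exact sequences of graded $R_\fp$-modules relating $M_i:=\Hom^*_{\sfT_\fp}(K_i,X_\fp)$ to $M_{i-1}$ via multiplication by $r_i$. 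Starting from $M_0=\End^*_\sfT(X)_\fp$, which by hypothesis is a nonzero finitely generated graded $R_\fp$-module, I would invoke graded Nakayama to conclude that multiplication by $r_i\in\fp R_\fp$ cannot act invertibly on the nonzero finitely generated module $M_{i-1}$; the long exact sequence then forces $M_i\ne 0$ and preserves finite generation for the next step.

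Finally, evaluating the localisation sequence \eqref{eq:loc-seq} for $(\sfT_\fp)_{\mcV(\fp)}\subseteq\sfT_\fp$ applied to $H_{X_\fp}$ at $K$ completes the argument: since all suspensions of $K$ lie in $(\sfT_\fp)_{\mcV(\fp)}$, Corollary~\ref{co:loc-seq} shows $L_{\mcV(\fp)}H_{X_\fp}$ vanishes on them, producing $(\gam_{\mcV(\fp)}H_{X_\fp})^*(K)\cong H_{X_\fp}^*(K)=M_n\ne 0$, hence $\gam_\fp H_X\ne 0$ and $\fp\in\supp_R H_X$. The main obstacle will be the inductive Nakayama step: one must carefully track shifts in the long exact sequence and simultaneously propagate finite generation from $M_{i-1}$ to $M_i$ at every stage so that graded Nakayama remains applicable throughout the induction.
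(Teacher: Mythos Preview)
Your argument is correct and follows essentially the same strategy as the paper: form the Koszul object $K=\kos{X_\fp}{\fp}$ and invoke graded Nakayama to force non-vanishing. The only difference is cosmetic: the paper applies Nakayama (citing \cite[Lemma~5.11]{\bik:2008a}) directly to deduce $K\neq 0$, hence $H_K=\gam_\fp H_{\kos X\fp}\neq 0$, and then uses exactness of $\gam_\fp$ on the triangles defining $\kos X\fp$ to conclude $\gam_\fp H_X\neq 0$; you instead run the induction on the modules $\Hom^*_{\sfT_\fp}(K_i,X_\fp)$ and read the conclusion off as the value of $\gam_\fp H_X$ at $K$ via the localisation sequence.
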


\begin{proof}
  The inclusion holds because $(H_X)_\fp=0$ iff $\End^*_\sfT(X)_\fp=
  0$. Now suppose that $\End^*_\sfT(X)_\fp\neq 0$ and that
  $\End^*_\sfT(X)$ is finitely generated. Then $X_\fp\neq 0$, and an
  application of Nakayama's lemma gives $\kos{X_\fp}\fp\neq 0$; see
  \cite[Lemma~5.11]{\bik:2008a}. The functor $H_{\kos{X_\fp}\fp}$ is
  $\mcV(\fp)$-torsion by Lemma~\ref{le:kos}. It is also $\fp$-local,
  so one gets
 \[
 \gam_\fp H_{\kos{X}\fp}=\gam_{\mcV(\fp)}H_{\kos{X_\fp}\fp}= H_{\kos{X_\fp}\fp}\neq 0.
 \] 
Hence $\fp$ is in $\supp_{R} H_X$.
\end{proof}

\subsection*{Composition laws}
Computing  support is compatible with cohomological localisation and colocalisation.

\begin{proposition}
\label{pr:support-V}
Let $\mcV\subseteq\Spec R$ be a specialisation closed subset. For each
$F\in\Coh\sfT$ the following equalities hold
\begin{align*}
\supp_R\gam_\mcV F &= \mcV\cap \supp_R F,\\
\supp_R L_\mcV F&=(\Spec R\setminus \mcV)\cap\supp_R F.
\end{align*}
\end{proposition}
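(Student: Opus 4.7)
The plan is to apply the exact functor $\gam_\fp$ to the localisation sequence of Proposition~\ref{pr:loc-seq} for the pair $\sfT_\mcV\subseteq\sfT$, namely
\[
\cdots\lto \gam_\mcV F\lto F\lto L_\mcV F\lto \Si\gam_\mcV F\lto\cdots,
\]
and then analyse the resulting long exact sequence one prime at a time, distinguishing $\fp\in\mcV$ from $\fp\notin\mcV$. Both equalities will drop out once the correct vanishing statement is established in each case.

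For $\fp\notin\mcV$, the argument is immediate: since $\gam_\mcV F$ lies in $\Coh(\sfT_\mcV)$, the inclusion \eqref{eq:coh-tor-is-tor-coh} shows that the $R$-module $(\gam_\mcV F)(X)$ is $\mcV$-torsion for every $X\in\sfT$; hence $(\gam_\mcV F)_\fp=0$, and therefore $\gam_\fp\gam_\mcV F=\gam_{\mcV(\fp)}((\gam_\mcV F)_\fp)=0$. The long exact sequence then forces $\gam_\fp F\xra{\sim}\gam_\fp L_\mcV F$.

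For $\fp\in\mcV$, I will instead show $\gam_\fp L_\mcV F=0$. Since $\mcV$ is specialisation closed, $\mcV(\fp)\subseteq\mcV$, so $\sfT_{\mcV(\fp)}\subseteq\sfT_\mcV$. The functor $L_\mcV F$ lies in $\Coh(\sfT/\sfT_\mcV)$ by Proposition~\ref{pr:loc-seq}, hence vanishes on all $Y\in\sfT_\mcV$ and in particular on $\sfT_{\mcV(\fp)}$. Localising at $\fp$ preserves this vanishing, so $(L_\mcV F)_\fp$ lies in $\Coh(\sfT/\sfT_{\mcV(\fp)})$; by Corollary~\ref{co:loc-seq} one obtains $\gam_\fp L_\mcV F=\gam_{\mcV(\fp)}((L_\mcV F)_\fp)=0$. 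The long exact sequence now gives $\gam_\fp\gam_\mcV F\xra{\sim}\gam_\fp F$.

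Collecting: for $\fp\in\mcV$ one has $\gam_\fp\gam_\mcV F\cong\gam_\fp F$ and $\gam_\fp L_\mcV F=0$, while for $\fp\notin\mcV$ one has $\gam_\fp\gam_\mcV F=0$ and $\gam_\fp L_\mcV F\cong\gam_\fp F$. Reading off when each of these is non-zero yields the two displayed equalities. No step looks genuinely hard; the only point requiring care is that only the inclusion in \eqref{eq:coh-tor-is-tor-coh}, not its converse, is being used, so no appeal to the forthcoming Corollary~\ref{co:coh-tor-is-tor-coh} is needed.
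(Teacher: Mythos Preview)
Your proof is correct and shares the same case split as the paper's: for $\fp\notin\mcV$ both arguments use that $\gam_\mcV F$ is $\mcV$-torsion to get $(\gam_\mcV F)_\fp=0$, and for $\fp\in\mcV$ both exploit $\sfT_{\mcV(\fp)}\subseteq\sfT_\mcV$. The packaging differs slightly. For $\fp\in\mcV$ the paper computes $\gam_\fp\gam_\mcV F\cong\gam_\fp F$ directly, using Lemma~\ref{le:rules-commute} to get $\gam_{\mcV(\fp)}\gam_\mcV\cong\gam_{\mcV(\fp)}$ and Corollary~\ref{co:commute} to swap $(\,\cdot\,)_\fp$ past $\gam_{\mcV(\fp)}$, and then treats the second equality by a parallel argument. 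You instead show $\gam_\fp L_\mcV F=0$ by the elementary observation that $(L_\mcV F)_\fp$ still vanishes on $\sfT_{\mcV(\fp)}$, and then let the exact sequence do the work. Your route has the small advantage of yielding both displayed equalities simultaneously and bypassing Corollary~\ref{co:commute}; otherwise the two arguments are essentially the same.
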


\begin{proof}
  If $\fp\not\in\mcV$ then $(\gam_\mcV F)_\fp=0$,  since
  $\gam_{\mcV}F$ is $\mcV$-torsion. Thus $\supp_R\gam_\mcV
  F \subseteq \mcV$.  If $\fp\in \mcV$ then
  $\sfT_{\mcV(\fp)}\subseteq\sfT_{\mcV}$, hence
  $\gam_{\mcV(\fp)}\gam_{\mcV}=\gam_{\mcV(\fp)}$; see
  Lemma~\ref{le:rules-commute}. This gives the second equality below:
\[
\gam_\fp (\gam_\mcV F)=(\gam_{\mcV(\fp)}\gam_\mcV F)_\fp=
(\gam_{\mcV(\fp)}F)_\fp = \gam_\fp F,
\] 
while the other two are by Corollary~\ref{co:commute}.  Thus 
\[
\supp_R\gam_\mcV F= \mcV\cap\supp_R\gam_\mcV F =\mcV\cap \supp_R F.
\]
This proves the first equality; the proof of the second is similar.  
\end{proof}

The following result says that a $\mcV$-torsion functor is a colimit
of representable functors defined by $\mcV$-torsion objects.
 
\begin{corollary}
\label{co:coh-tor-is-tor-coh}
Let $\mcV\subseteq\Spec R$ be specialisation closed. Then
$\Coh(\sfT_{\mcV}) = \Coh(\sfT)_{\mcV}$.
\end{corollary}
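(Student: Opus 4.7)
The plan is to establish the non-trivial inclusion $(\Coh\sfT)_{\mcV} \subseteq \Coh(\sfT_{\mcV})$, since the reverse inclusion was already noted in \eqref{eq:coh-tor-is-tor-coh}. My strategy is to reduce the containment to a vanishing statement about the localisation functor $L_{\mcV}$, and then use support to verify that vanishing.

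First I would fix $F \in (\Coh\sfT)_{\mcV}$ and observe that by definition $F^{*}(X)$ is a $\mcV$-torsion $R$-module for every $X \in \sfT$. In particular, $F^{*}(X)_{\fp} = 0$ for every $\fp \notin \mcV$, so $F_{\fp} = 0$ for such primes, and hence
\[
\gam_{\fp} F = \gam_{\mcV(\fp)}(F_{\fp}) = 0 \quad \text{for all } \fp \notin \mcV.
\]
This gives the containment $\supp_{R} F \subseteq \mcV$.

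Next I would apply Proposition~\ref{pr:support-V} to compute
\[
\supp_{R} L_{\mcV} F = (\Spec R \setminus \mcV) \cap \supp_{R} F = \varnothing,
\]
the last equality because $\supp_{R} F \subseteq \mcV$. Proposition~\ref{pr:support} then yields $L_{\mcV} F = 0$. Finally, Corollary~\ref{co:loc-seq}, applied to the triangulated subcategory $\sfT_{\mcV} \subseteq \sfT$, translates the vanishing $L_{\mcV} F = 0$ into the statement $F \in \Coh(\sfT_{\mcV})$, completing the proof.

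There is no real obstacle here once the support machinery is in place: the argument is essentially a one-line combination of Propositions~\ref{pr:support} and \ref{pr:support-V} with the recognition criterion in Corollary~\ref{co:loc-seq}. The only subtle point worth double-checking is that membership in $(\Coh\sfT)_{\mcV}$ really does force $\supp_{R} F \subseteq \mcV$, which is immediate from the pointwise definition of $\mcV$-torsion for functors together with the formula $\gam_{\fp} F = \gam_{\mcV(\fp)}(F_{\fp})$.
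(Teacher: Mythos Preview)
Your argument is correct and matches the paper's proof essentially line for line: the paper also fixes $F\in(\Coh\sfT)_{\mcV}$, observes $F_{\fp}=0$ for $\fp\notin\mcV$ to get $\supp_{R}F\subseteq\mcV$, invokes Proposition~\ref{pr:support-V} (together with Proposition~\ref{pr:support}) to conclude $L_{\mcV}F=0$, and then uses the localisation sequence \eqref{eq:loc-seq} (equivalently Corollary~\ref{co:loc-seq}) to deduce $F\in\Coh(\sfT_{\mcV})$.
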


\begin{proof}
  It suffices to prove that $\Coh(\sfT)_{\mcV}\subseteq
  \Coh(\sfT_{\mcV})$; confer \eqref{eq:coh-tor-is-tor-coh}. Fix
  $F\in(\Coh \sfT)_{\mcV}$. For any $\fp\not\in\mcV$, one has
  $F_{\fp}=0$, and hence $\gam_{\fp}F=0$, that is to say,
  $\supp_{R}F\subseteq \mcV$. Proposition~\ref{pr:support-V} then
  implies that $L_{\mcV}F=0$ and it follows from \eqref{eq:loc-seq}
  that the natural map $\gam_{\mcV}F\to F$ is an isomorphism. This is
  the desired result.
\end{proof}

\begin{corollary}
\label{co:gamma-L}
Let $\mcV$ and $\mcW$ be specialisation closed subsets of $\Spec R$. Then
\[
\gam_\mcV\gam_\mcW\cong \gam_{\mcV\cap\mcW}\cong
\gam_\mcW\gam_\mcV,\qquad L_\mcV L_\mcW\cong L _{\mcV\cup\mcW}\cong
L_\mcW L_\mcV,\qquad \gam_\mcV L_\mcW\cong L_\mcW\gam_\mcV.
\]
\end{corollary}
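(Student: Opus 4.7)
The plan is to reduce each of the three isomorphisms to the uniqueness statement in Proposition~\ref{pr:loc-seq}: any exact sequence $F'\to F\to F''$ in $\Coh\sfT$ with $F'\in\Coh\sfS$ and $F''\in\Coh(\sfT/\sfS)$ is isomorphic to the canonical localisation sequence for $\sfS\subseteq\sfT$. The key bookkeeping device is that, by combining Corollary~\ref{co:coh-tor-is-tor-coh} with Corollary~\ref{co:loc-seq} and Propositions~\ref{pr:support} and~\ref{pr:support-V}, membership in $\Coh(\sfT_\mcU)$ is equivalent to $\supp_R F\subseteq\mcU$, and membership in $\Coh(\sfT/\sfT_\mcU)$ to $\supp_R F\cap\mcU=\varnothing$. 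In each case, I will apply a suitable exact functor ($\gam_\mcV$ or $L_\mcV$, exact by Lemma~\ref{le:exact-fun}, and commuting with $\Sigma$) to an existing localisation sequence, verify the support conditions on the outer terms, and invoke this uniqueness.

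For $\gam_\mcV\gam_\mcW\cong\gam_{\mcV\cap\mcW}$: apply the exact functor $\gam_\mcV$ to the localisation sequence $\gam_\mcW F\to F\to L_\mcW F$ for $\sfT_\mcW\subseteq\sfT$, producing an exact sequence
\[
\gam_\mcV\gam_\mcW F\lto \gam_\mcV F\lto \gam_\mcV L_\mcW F.
\]
Two applications of Proposition~\ref{pr:support-V} give $\supp_R\gam_\mcV\gam_\mcW F\subseteq\mcV\cap\mcW$ and $\supp_R\gam_\mcV L_\mcW F\subseteq\mcV\cap(\Spec R\setminus\mcW)\subseteq\Spec R\setminus(\mcV\cap\mcW)$. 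By the support criterion above, this is therefore an exact sequence of the form required to apply Proposition~\ref{pr:loc-seq} with $\sfS=\sfT_{\mcV\cap\mcW}$ to the middle term $\gam_\mcV F$, so it is isomorphic to the canonical localisation sequence $\gam_{\mcV\cap\mcW}(\gam_\mcV F)\to\gam_\mcV F\to L_{\mcV\cap\mcW}(\gam_\mcV F)$. Finally, since $\sfT_{\mcV\cap\mcW}\subseteq\sfT_\mcV$, Lemma~\ref{le:rules-commute} collapses $\gam_{\mcV\cap\mcW}\gam_\mcV$ to $\gam_{\mcV\cap\mcW}$, yielding the desired isomorphism. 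The symmetric statement $\gam_\mcW\gam_\mcV\cong\gam_{\mcV\cap\mcW}$ follows by interchanging the roles of $\mcV$ and $\mcW$.

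The second isomorphism is established dually: apply $L_\mcV$ to the localisation sequence for $\sfT_\mcW$ to obtain $L_\mcV\gam_\mcW F\to L_\mcV F\to L_\mcV L_\mcW F$, check that $\supp_R L_\mcV\gam_\mcW F\subseteq\mcW\subseteq\mcV\cup\mcW$ and $\supp_R L_\mcV L_\mcW F\subseteq\Spec R\setminus(\mcV\cup\mcW)$, recognise this via Proposition~\ref{pr:loc-seq} as the localisation sequence for $\sfT_{\mcV\cup\mcW}$ at $L_\mcV F$, and use $L_{\mcV\cup\mcW}L_\mcV\cong L_{\mcV\cup\mcW}$ from Lemma~\ref{le:rules-commute} (since $\sfT_\mcV\subseteq\sfT_{\mcV\cup\mcW}$) to identify the right-hand term with $L_{\mcV\cup\mcW}F$. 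For the third isomorphism, having established the second, apply $L_\mcW$ to the localisation sequence $\gam_\mcV F\to F\to L_\mcV F$ to get $L_\mcW\gam_\mcV F\to L_\mcW F\to L_\mcW L_\mcV F\cong L_{\mcV\cup\mcW}F$; the supports of the outer terms lie in $\mcV$ and $\Spec R\setminus(\mcV\cup\mcW)\subseteq\Spec R\setminus\mcV$ respectively, so Proposition~\ref{pr:loc-seq} recognises the sequence as the localisation sequence for $\sfT_\mcV$ at $L_\mcW F$, whose first term is $\gam_\mcV L_\mcW F$.

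The main obstacle is not conceptual but logistical: one must track, in each of the three cases, that the specialisation closed set controlling the support of the first outer term matches the one controlling the \emph{complement} of the support of the third, so that a single $\sfS=\sfT_\mcU$ is available to invoke Proposition~\ref{pr:loc-seq}. All of this reduces to elementary set-theoretic identities involving $\mcV$, $\mcW$, $\mcV\cap\mcW$, and $\mcV\cup\mcW$, combined with the computational rules of Proposition~\ref{pr:support-V}; no further structural ingredient is needed.
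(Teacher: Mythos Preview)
Your proof is correct and follows essentially the approach the paper intends: the paper's own proof simply cites Proposition~\ref{pr:support-V} and refers to the argument for \cite[Proposition~6.1]{\bik:2008a}, which is precisely the strategy you have written out in full---use the support computations to place the outer terms of an applied localisation sequence in $\Coh(\sfT_\mcU)$ and $\Coh(\sfT/\sfT_\mcU)$ respectively, then invoke the uniqueness clause of Proposition~\ref{pr:loc-seq}. Your explicit handling of the set-theoretic identities and the final reduction via Lemma~\ref{le:rules-commute} is exactly what the cited argument does in the compactly generated setting.
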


\begin{proof}
  Given Proposition~\ref{pr:support-V}, one can argue as for
  \cite[Proposition~6.1]{\bik:2008a}.
\end{proof}

\begin{corollary}
\label{co:gammap-variants}
Let $\fp\in\Spec R$, and let $\mcV$ and $\mcW$ be specialisation
closed subsets of $\Spec R$ such that
$\mcV\setminus\mcW=\{\fp\}$. Then
\[
 L_{\mcW}\gam_{\mcV}\cong \gam_\fp\cong \gam_{\mcV}L_{\mcW}.
 \]
\end{corollary}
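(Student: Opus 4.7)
The plan is to reduce, via the commutation rules of Corollary~\ref{co:gamma-L}, to a single natural isomorphism, and then to exploit a combinatorial consequence of the hypothesis $\mcV\setminus\mcW=\{\fp\}$. Corollary~\ref{co:gamma-L} yields $\gam_{\mcV}L_{\mcW}\cong L_{\mcW}\gam_{\mcV}$, and applied to the pair $(\mcV(\fp),\mcZ(R\setminus\fp))$ it yields $\gam_{\fp}=\gam_{\mcV(\fp)}L_{\mcZ(R\setminus\fp)}\cong L_{\mcZ(R\setminus\fp)}\gam_{\mcV(\fp)}$. So it will suffice to construct one natural isomorphism, say $L_{\mcW}\gam_{\mcV}F\xra{\sim}\gam_{\fp}F$ for $F\in\Coh\sfT$.

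The heart of the argument will be the following observation. Since $\mcV$ and $\mcW$ are specialisation closed, so is $\mcV\cap\mcW=\mcV\setminus\{\fp\}$; this forces no prime strictly contained in $\fp$ to lie in $\mcV$, because otherwise specialisation closure of $\mcV\cap\mcW$ would put $\fp$ into $\mcW$, contradicting $\fp\notin\mcW$. Hence every $\fq\in\mcV$ with $\fq\ne\fp$ satisfies $\fq\not\subseteq\fp$, giving the set equality
\[
\mcV\cap\mcW\;=\;\mcV\setminus\{\fp\}\;=\;\mcV\cap\mcZ(R\setminus\fp).
\]

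From this set equality, Corollary~\ref{co:gamma-L} delivers $\gam_{\mcW}\gam_{\mcV}\cong\gam_{\mcV\cap\mcW}\cong\gam_{\mcZ(R\setminus\fp)}\gam_{\mcV}$, so comparing the localisation triangles \eqref{eq:loc-seq} for $L_{\mcW}$ and $L_{\mcZ(R\setminus\fp)}$ applied to $\gam_{\mcV}F$ produces $L_{\mcW}\gam_{\mcV}F\cong(\gam_{\mcV}F)_{\fp}$. To identify this with $\gam_{\fp}F$, I would use that $\mcV(\fp)\subseteq\mcV$ gives $\gam_{\mcV(\fp)}\gam_{\mcV}\cong\gam_{\mcV(\fp)}$ (Corollary~\ref{co:gamma-L}) and that noetherianness of $R$ makes Corollary~\ref{co:commute} applicable with $\fa=\fp$; chaining these one obtains $\gam_{\fp}F\cong\gam_{\mcV(\fp)}\bigl((\gam_{\mcV}F)_{\fp}\bigr)$. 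The remaining task is then to show $(\gam_{\mcV}F)_{\fp}$ is $\mcV(\fp)$-torsion, which I would verify by computing the support of $L_{\mcV(\fp)}(\gam_{\mcV}F)_{\fp}$ via Proposition~\ref{pr:support-V}: it is contained in $\{\fq\subsetneq\fp\}\cap\mcV$, which is empty by the combinatorial observation, so Proposition~\ref{pr:support} forces vanishing.

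The step I expect to be the main obstacle is the combinatorial observation in paragraph two; once one recognises that the hypothesis $\mcV\setminus\mcW=\{\fp\}$ combined with specialisation closure excludes primes strictly below $\fp$ from $\mcV$, the identification of $L_{\mcW}$ with localisation at $\fp$ on the subcategory of $\mcV$-torsion functors becomes essentially mechanical via the composition laws of Corollaries~\ref{co:gamma-L} and \ref{co:commute}.
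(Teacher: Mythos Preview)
Your argument is correct and follows the same support-based route as the paper's (which simply invokes \cite[Theorem~6.2]{\bik:2008a}); your explicit combinatorial observation that $\mcV$ contains no prime strictly below $\fp$ is precisely the set-theoretic content underlying those support computations. The one step that deserves a word more is ``comparing the localisation triangles''---to conclude $L_{\mcW}\gam_{\mcV}F\cong(\gam_{\mcV}F)_{\fp}$ you need the isomorphism $\gam_{\mcW}\gam_{\mcV}F\cong\gam_{\mcZ(R\setminus\fp)}\gam_{\mcV}F$ to respect the counits to $\gam_{\mcV}F$, which it does since both sides are $(\mcV\cap\mcW)$-torsion and hence factor uniquely through $\gam_{\mcV\cap\mcW}(\gam_{\mcV}F)$---but this is easily supplied.
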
 

\begin{proof}
  Given Proposition~\ref{pr:support-V}, one can argue as in the proof
  of \cite[Theorem~6.2]{\bik:2008a}.
\end{proof}

\begin{corollary}
\label{co:gammap-support}
Let $\fp\in\Spec R$ and $F\in\Coh\sfT$. Then  $\supp_{R}\gam_\fp F\subseteq \{\fp\}$.
\end{corollary}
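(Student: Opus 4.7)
The plan is to reduce the statement to two applications of Proposition~\ref{pr:support-V}, using the formula $\gam_\fp F = \gam_{\mcV(\fp)}(F_\fp)$ from the definition together with the identification (in the subsection on localisation at a prime ideal) of the functor $F\mapsto F_\fp$ with $L_{\mcZ(R\setminus\fp)}$ up to natural isomorphism. Note that $\mcZ(R\setminus\fp)=\{\fq\in\Spec R\mid \fq\not\subseteq\fp\}$ is specialisation closed, and $\mcV(\fp)=\{\fq\in\Spec R\mid\fq\supseteq\fp\}$, so both halves of Proposition~\ref{pr:support-V} are available.

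First I would compute the support of the localisation $F_\fp$. By Proposition~\ref{pr:support-V} applied to the specialisation closed set $\mcZ(R\setminus\fp)$,
\[
\supp_R F_\fp = \supp_R L_{\mcZ(R\setminus\fp)}F = \bigl(\Spec R\setminus\mcZ(R\setminus\fp)\bigr)\cap \supp_R F = \{\fq\mid\fq\subseteq\fp\}\cap\supp_R F.
\]
Next I would apply the first equality in Proposition~\ref{pr:support-V}, this time to the specialisation closed set $\mcV(\fp)$ and the functor $F_\fp$, to obtain
\[
\supp_R\gam_\fp F = \supp_R \gam_{\mcV(\fp)}(F_\fp) = \mcV(\fp)\cap\supp_R F_\fp.
\]

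Combining the two displays yields
\[
\supp_R\gam_\fp F = \mcV(\fp)\cap\{\fq\mid\fq\subseteq\fp\}\cap\supp_R F,
\]
and since the intersection $\mcV(\fp)\cap\{\fq\mid\fq\subseteq\fp\}$ consists of primes that are both contained in and contain $\fp$, it equals $\{\fp\}$. Hence $\supp_R\gam_\fp F\subseteq\{\fp\}$, as required. No real obstacle arises: the argument is a bookkeeping exercise once Proposition~\ref{pr:support-V} is in hand, and the only point worth flagging is the invocation of the identification of $(-)_\fp$ with $L_{\mcZ(R\setminus\fp)}$ (via Proposition~\ref{pr:loc-prime}) so that Proposition~\ref{pr:support-V} is applicable to the localisation at a prime.
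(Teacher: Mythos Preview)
Your proof is correct and follows essentially the same approach as the paper. The paper's one-line proof (``Combine Proposition~\ref{pr:support-V} and Corollary~\ref{co:gammap-variants}'') amounts to writing $\gam_\fp$ as a composite $L_\mcW\gam_\mcV$ with $\mcV\setminus\mcW=\{\fp\}$ and applying Proposition~\ref{pr:support-V} twice; you do exactly this with the specific choice $\mcV=\mcV(\fp)$, $\mcW=\mcZ(R\setminus\fp)$, invoking Proposition~\ref{pr:loc-prime} directly rather than the more general Corollary~\ref{co:gammap-variants}.
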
 

\begin{proof}
  Combine Proposition~\ref{pr:support-V} and
  Corollary~\ref{co:gammap-variants}.
\end{proof}

\subsection*{Colimits}
The following result can be used to reduce computations involving
specialisation closed subsets to those involving closed sets; it is an
analogue of \cite[Lemma~6.6]{Stevenson:2011a} in the compactly
generated context.

\begin{lemma}
\label{le:gamma_colim}
Let $\mcV=\bigcup_{\alpha}\mcV_\alpha$ be a directed union of
specialisation closed subsets of $\Spec R$. Then
 \[
 \colim\gam_{\mcV_\alpha}\xra{\sim}\gam_{\mcV} \qquad\text{and}\qquad
 \colim L_{\mcV_\alpha}\xra{\sim} L_{\mcV}.
 \]
\end{lemma}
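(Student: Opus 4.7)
The plan is to take the filtered colimit of the localisation sequences \eqref{eq:loc-seq} for each $\mcV_\alpha$ and identify the result with the localisation sequence for $\sfT_\mcV\subseteq\sfT$ via the uniqueness statement in Proposition~\ref{pr:loc-seq}. The required transitions $\gam_{\mcV_\alpha}\to\gam_{\mcV_\beta}$ and $L_{\mcV_\alpha}\to L_{\mcV_\beta}$ for $\mcV_\alpha\subseteq\mcV_\beta$ (and analogous ones into $\gam_\mcV$ and $L_\mcV$) come from the universal properties of the relevant adjoints, using that $\mcV_\alpha\subseteq\mcV_\beta$ entails $\sfT_{\mcV_\alpha}\subseteq\sfT_{\mcV_\beta}$, and hence $\Coh(\sfT_{\mcV_\alpha})\subseteq\Coh(\sfT_{\mcV_\beta})$ together with $\Coh(\sfT/\sfT_{\mcV_\beta})\subseteq\Coh(\sfT/\sfT_{\mcV_\alpha})$.

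Since filtered colimits are exact and computed pointwise in $\Mod\sfT$, and $\Coh\sfT$ is closed under them by Lemma~\ref{le:flat}, applying $\colim_\alpha$ to the long exact sequences \eqref{eq:loc-seq} for each $\mcV_\alpha$ yields an exact sequence
\[
\cdots\to\colim_\alpha\gam_{\mcV_\alpha}F\to F\to\colim_\alpha L_{\mcV_\alpha}F\to\Si\colim_\alpha\gam_{\mcV_\alpha}F\to\cdots
\]
in $\Coh\sfT$, whose morphisms into and out of $F$ are the canonical ones. By the uniqueness part of Proposition~\ref{pr:loc-seq}, once I check that $\colim_\alpha\gam_{\mcV_\alpha}F$ belongs to $\Coh(\sfT_\mcV)$ and $\colim_\alpha L_{\mcV_\alpha}F$ belongs to $\Coh(\sfT/\sfT_\mcV)$, this sequence must coincide with the localisation sequence for $\sfT_\mcV\subseteq\sfT$, delivering both desired isomorphisms at once. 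The first containment is immediate: each $\gam_{\mcV_\alpha}F$ lies in $\Coh(\sfT_{\mcV_\alpha})\subseteq\Coh(\sfT_\mcV)$, and $\Coh(\sfT_\mcV)$ is closed under filtered colimits in $\Coh\sfT$.

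The main obstacle is the second containment. By Corollary~\ref{co:loc-seq} it amounts to showing $\gam_\mcV(\colim_\alpha L_{\mcV_\alpha}F)=0$, which by Propositions~\ref{pr:support} and \ref{pr:support-V} reduces further to verifying $\gam_\fp(\colim_\alpha L_{\mcV_\alpha}F)=0$ for every $\fp\in\mcV$. Since $\gam_\fp$ preserves filtered colimits, it suffices to prove $\gam_\fp L_{\mcV_\alpha}F=0$ for all sufficiently large $\alpha$. Given $\fp\in\mcV$, directedness of the union furnishes an $\alpha_0$ with $\fp\in\mcV_{\alpha_0}$, and for $\alpha\geq\alpha_0$ one has $\mcV(\fp)\subseteq\mcV_\alpha$ since $\mcV_\alpha$ is specialisation closed. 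Lemma~\ref{le:rules-commute} then gives $\gam_{\mcV(\fp)}\gam_{\mcV_\alpha}\cong\gam_{\mcV(\fp)}$, and since $\gam_{\mcV_\alpha}(L_{\mcV_\alpha}F)=0$ by Corollary~\ref{co:loc-seq}, one obtains $\gam_\fp L_{\mcV_\alpha}F=(\gam_{\mcV(\fp)}L_{\mcV_\alpha}F)_\fp=0$, completing the argument.
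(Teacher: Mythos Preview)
Your argument is correct. Both your proof and the paper's rest on the same computational core: for $\fp\in\mcV$ one has $\gam_{\mcV(\fp)}L_{\mcV_\alpha}=0$ as soon as $\mcV(\fp)\subseteq\mcV_\alpha$, and since $\gam_\fp$ commutes with filtered colimits this forces the relevant colimit to have empty support and hence vanish by Proposition~\ref{pr:support}.

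The organisation differs slightly. The paper applies the $\mcV_\alpha$-localisation sequence to $\gam_\mcV F$ (using $\gam_{\mcV_\alpha}\gam_\mcV\cong\gam_{\mcV_\alpha}$), obtains
\[
\cdots\lto\colim\gam_{\mcV_\alpha}\lto\gam_\mcV\lto\colim L_{\mcV_\alpha}\gam_\mcV\lto\cdots,
\]
shows the right-hand term vanishes directly, and then deduces the $L_\mcV$ statement separately from \eqref{eq:loc-seq}. You instead apply the $\mcV_\alpha$-sequence to $F$ itself, identify the colimit sequence with the $\mcV$-localisation sequence via the uniqueness clause of Proposition~\ref{pr:loc-seq}, and thereby obtain both isomorphisms in one stroke. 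Your route is a touch more economical; the paper's is perhaps more hands-on since it exhibits the comparison map $\colim\gam_{\mcV_\alpha}\to\gam_\mcV$ explicitly and avoids any discussion of compatibility of the connecting maps across different $\alpha$.
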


\begin{proof}
  We make repeated use of Proposition~\ref{pr:support}. Since
  $\gam_{\mcV_\alpha}\gam_\mcV=\gam_{\mcV_\alpha}$, it follows from
  the localisation sequence \eqref{eq:loc-seq} that the natural
  morphism $\colim\gam_{\mcV_\alpha}\to\gam_{\mcV}$ fits into an exact
  sequence
  \[
 \cdots \lto\colim\gam_{\mcV_\alpha}\lto\gam_{\mcV}\lto\colim
 L_{\mcV_\alpha}\gam_\mcV\lto\cdots
 \]
 We claim that $\colim L_{\mcV_\alpha}\gam_\mcV=0$. Indeed,
 $\gam_{\fp}$ commutes with filtered colimits so the claim is that
 $\colim (\gam_{\fp} L_{\mcV_\alpha}\gam_\mcV) =0$ for each $\fp$ in
 $\Spec R$. Proposition~\ref{pr:support-V} and its corollaries yield
\[
\supp_{R} (\gam_{\fp} L_{\mcV_\alpha}\gam_\mcV F)\subseteq \{\fp\}\cap
(\Spec R\setminus \mcV_{\alpha})\cap \mcV
\]
for each $F$ in $\Coh\sfT$. Thus, if $\fp\not\in\mcV$, then evidently
$\gam_{\fp} L_{\mcV_\alpha}\gam_\mcV = 0$. Assume $\fp\in\mcV$.  When
$\fp\in\mcV_\alpha$ as well, it again follows from the equality above
that $\gam_{\fp} L_{\mcV_\alpha}\gam_\mcV = 0$. Since $\mcV$ is
directed union of the $\mcV_{\alpha}$, the desired vanishing follows.

This proves the claim. The exact sequence above then yields the
isomorphism involving $\gam_{\mcV}$. The assertion for $L_\mcV$
follows, using again \eqref{eq:loc-seq}.
\end{proof}

\section{The local-global principle}

Let $R$ be a noetherian graded commutative ring and $\sfT$ be an
essentially small $R$-linear triangulated category.  In this section
we establish a local-global principle for $\Coh\sfT$, analogous to the
one in \cite[\S3]{\bik:2011a}. A local-global principle for $\sfT$
then follows.

\subsection*{Localising subcategories}
We call a full subcategory of $\Coh\sfT$ \emph{localising} if it is
closed under forming coproducts, extensions, and suspensions. Here,
$F$ is an \emph{extension} of $F'$ and $F''$ if there is an exact
sequence $F'\to F\to F''$ in $\Coh\sfT$. Any localising subcategory is
closed under subobjects and quotient objects; this follows by
specialising $F'=0$ or $F''=0$. In particular, a localising
subcategory is closed under filtered colimits. The smallest localising
subcategory containing a subcategory $\sfC$ of $\sfT$ is denoted
$\Loc(\sfC)$.

The following lemma provides some basic properties of localising
subcategories; they will be used without further mention.  The
argument is straightforward.

\begin{lemma}
Let $P\colon\Coh\sfT\to\Coh\sfU$ be an exact functor that preserves coproducts.
\begin{enumerate}
\item If $\sfC\subseteq\Coh\sfU$ is localising, then
  $\{F\in\Coh\sfT\mid P(F)\in\sfC\}$ is a localising subcategory of
  $\Coh\sfT$. In particular, the kernel \[\Ker(P)=\{F\in\Coh\sfT\mid
  P(F)=0\}\]
  is a localising subcategory of $\Coh \sfT$.
\item Any subcategory $\sfC\subseteq\Coh\sfT$ satisfies
  $P(\Loc(\sfC))\subseteq \Loc(P(\sfC))$. \qed
\end{enumerate}
\end{lemma}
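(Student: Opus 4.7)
The plan is to handle part (1) by directly verifying the three closure conditions in the definition of a localising subcategory, and then obtain part (2) as a formal consequence of (1).

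For part (1), set $\sfD=\{F\in\Coh\sfT\mid P(F)\in\sfC\}$. First, for coproducts, if $\{F_{i}\}$ is a family in $\sfD$, then since $P$ preserves coproducts one has $P(\coprod_{i}F_{i})\cong\coprod_{i}P(F_{i})$, which lies in $\sfC$ because $\sfC$ is closed under coproducts; hence $\coprod_{i}F_{i}\in\sfD$. Second, for extensions, an exact sequence $F'\to F\to F''$ with $F',F''\in\sfD$ gets mapped by $P$ (which is exact) to an exact sequence $P(F')\to P(F)\to P(F'')$ with outer terms in $\sfC$; closure of $\sfC$ under extensions forces $P(F)\in\sfC$, so $F\in\sfD$. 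Third, for suspensions, the natural isomorphism $P\comp\Si\xra{\sim}\Si\comp P$ that is part of the definition of an exact functor gives $P(\Si F)\cong\Si P(F)\in\sfC$ whenever $F\in\sfD$. The particular case $\sfC=\{0\}$ yields the statement about $\Ker(P)$.

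For part (2), I would apply (1) to the localising subcategory $\Loc(P(\sfC))\subseteq\Coh\sfU$, obtaining that
\[
\sfD=\{F\in\Coh\sfT\mid P(F)\in\Loc(P(\sfC))\}
\]
is a localising subcategory of $\Coh\sfT$. By construction $\sfC\subseteq\sfD$, so minimality of $\Loc(\sfC)$ gives $\Loc(\sfC)\subseteq\sfD$, which is precisely the inclusion $P(\Loc(\sfC))\subseteq\Loc(P(\sfC))$.

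I do not anticipate any real obstacle here; the only subtle point is to remember that the ``suspension compatibility'' for an exact functor is built into the definition given at the start of the section, so the closure under $\Si$ in $\sfD$ is automatic rather than requiring an extra hypothesis. Everything else is a routine transport of the defining closure properties across $P$.
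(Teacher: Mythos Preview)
Your argument is correct and is precisely the routine verification the paper has in mind: the lemma is stated with a terminal \qedhere{} and the surrounding text remarks only that ``the argument is straightforward,'' giving no further details. Your direct check of the three closure properties for part~(1) and the preimage trick for part~(2) constitute exactly that straightforward argument.
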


We provide important examples of localising subcategories.

\begin{lemma}
\label{le:localising}
Let $\sfS\subseteq\sfT$ be a triangulated subcategory. Then $\Coh\sfS$
and $\Coh\sfT/\sfS$ are localising when viewed as subcategories of
$\Coh\sfT$.
\end{lemma}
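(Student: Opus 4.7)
The plan is to exhibit $\Coh\sfS$ and $\Coh\sfT/\sfS$ as kernels of exact, coproduct-preserving functors on $\Coh\sfT$, and then invoke the preceding lemma on localising subcategories.

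By Corollary~\ref{co:loc-seq}, there are identifications
\[
\Coh\sfS=\Ker(L)\qquad\text{and}\qquad\Coh\sfT/\sfS=\Ker(\gam),
\]
where $L=q_*\comp q^*$ and $\gam=i^*\comp i_*$ are the (co)localisation functors from \eqref{eq:loc-fun}. So it suffices to show that $L$ and $\gam$ are exact functors $\Coh\sfT\to\Coh\sfT$ that preserve coproducts (and commute with $\Si$); the preceding lemma then yields the claim. Exactness, together with a natural isomorphism with $\Si$, is built into the definition of an exact functor, and is ensured for $L$ and $\gam$ by Lemma~\ref{le:exact-fun}(2) applied to $q$ and $i$.

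For preservation of coproducts, I would argue componentwise. The functor $i^*$ is a left adjoint, hence preserves all colimits, and dually $q^*$ is a left adjoint; both therefore preserve coproducts. For the restrictions $i_*(F)=F\comp i$ and $q_*(G)=G\comp q$, coproducts in $\Mod\sfT$ (and hence in $\Coh\sfT$, where they are formed as in $\Mod\sfT$ and remain cohomological by Lemma~\ref{le:flat}) are computed pointwise, so both $i_*$ and $q_*$ preserve coproducts as well. Consequently $L$ and $\gam$ preserve coproducts, completing the verification.

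No serious obstacle is expected: the only point requiring a moment's thought is that coproducts in $\Coh\sfT$ agree with those computed pointwise in $\Mod\sfT$, which is immediate from Lemma~\ref{le:flat} since a coproduct of filtered colimits of representables is again a filtered colimit of representables. Everything else reduces to a straightforward combination of Corollary~\ref{co:loc-seq}, Lemma~\ref{le:exact-fun}, and the preceding lemma on kernels of exact coproduct-preserving functors.
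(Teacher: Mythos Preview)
Your proposal is correct and follows essentially the same approach as the paper: identify $\Coh\sfS$ and $\Coh\sfT/\sfS$ as $\Ker(L)$ and $\Ker(\gam)$ via Corollary~\ref{co:loc-seq}, then apply the preceding lemma after checking that $L$ and $\gam$ are exact and coproduct-preserving via Lemma~\ref{le:exact-fun}. The only difference is that you spell out the coproduct preservation explicitly, whereas the paper cites Lemma~\ref{le:exact-fun} directly (relying on the fact that an additive functor preserving filtered colimits preserves arbitrary coproducts).
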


\begin{proof}
  From Corollary~\ref{co:loc-seq} it follows that $\Coh\sfS$ equals
  the kernel of the functor $L$, while $\Coh\sfT/\sfS$ equals the
  kernel of the functor $\gam$. It remains to observe that both
  functors are exact and preserve coproducts, by
  Lemma~\ref{le:exact-fun}.
\end{proof}

\begin{proposition}
\label{pr:colim}
Let $\mcV$ be a specialisation closed subset of $\Spec R$ and
$F\in\Coh\sfT$. Then $\gam_\mcV F$ and $L_\mcV F$ belong to $\Loc(F)$.
\end{proposition}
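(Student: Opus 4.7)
The plan is to reduce the statement to a single element of $R$, where the relevant functors are computed explicitly. First, reduce to closed sets. Since $R$ is noetherian and $\mcV$ is specialisation closed, $\mcV$ is the directed union of the closed sets $\mcV(\fa)$ for (finitely generated) ideals $\fa$ satisfying $\mcV(\fa)\subseteq\mcV$, with directedness coming from $\mcV(\fa)\cup\mcV(\fb)=\mcV(\fa\cap\fb)$. Lemma~\ref{le:gamma_colim} then gives $\gam_\mcV F\cong\colim\gam_{\mcV(\fa)}F$ and $L_\mcV F\cong\colim L_{\mcV(\fa)}F$. Since localising subcategories are closed under filtered colimits, it suffices to prove the result for each closed set $\mcV(\fa)$.

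For a finitely generated ideal $\fa=(r_1,\ldots,r_n)$, Lemma~\ref{le:composite} identifies $\gam_{\mcV(\fa)}$ with the iterated composition $\gam_{\mcV(r_n)}\comp\cdots\comp\gam_{\mcV(r_1)}$, so once the single-element case is established, iteration delivers $\gam_{\mcV(\fa)}F\in\Loc(F)$. To pass from $\gam_{\mcV(\fa)}F\in\Loc(F)$ to $L_{\mcV(\fa)}F\in\Loc(F)$, I apply the localisation sequence \eqref{eq:loc-seq} for $\sfS=\sfT_{\mcV(\fa)}$, which provides a short exact sequence in $\Coh\sfT$ of the form $0\to A\to L_{\mcV(\fa)}F\to B\to 0$, where $A$ is a quotient of $F$ and $B$ is a subobject of $\Si\gam_{\mcV(\fa)}F$. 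Both lie in $\Loc(F)$ by closure under subobjects and quotients, so closure under extensions gives $L_{\mcV(\fa)}F\in\Loc(F)$.

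It remains to handle a single element $r\in R$ of degree $d$. Lemma~\ref{le:gamma_r_colim}(1) presents $L_{\mcV(r)}F$ as the filtered colimit of $F\xra{r}\Si^d F\xra{r}\Si^{2d}F\xra{r}\cdots$, a filtered colimit of suspensions of $F$, hence $L_{\mcV(r)}F\in\Loc(F)$. For $\gam_{\mcV(r)}F$, the localisation sequence now yields a short exact sequence $0\to C\to \gam_{\mcV(r)}F\to D\to 0$ in $\Coh\sfT$, where $C$ is a quotient of $\Si^{-1}(L_{\mcV(r)}F)\in\Loc(F)$ and $D=\ker(F\to L_{\mcV(r)}F)$ is a subobject of $F$. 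Closure under extensions then places $\gam_{\mcV(r)}F$ in $\Loc(F)$.

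The main technical point is to extract genuine short exact sequences in the exact category $\Coh\sfT$ from the long exact sequence \eqref{eq:loc-seq}, which is only guaranteed to be pointwise exact in $\Mod\sfT$; the relevant images and kernels must be verified to be cohomological functors. This rests on the closure of $\Coh\sfT$ under extensions in $\Mod\sfT$ (Lemma~\ref{le:quillen}) together with the concrete description of the maps in the localisation sequence in terms of triangles in $\sfT$.
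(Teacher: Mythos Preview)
Your reduction strategy---from general specialisation closed sets to closed sets via Lemma~\ref{le:gamma_colim}, then to principal closed sets via Lemma~\ref{le:composite}, then handling a single element via Lemma~\ref{le:gamma_r_colim}(1)---is exactly the paper's approach. The paper compresses this into three lines, but the content is the same.

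Your final paragraph, however, introduces a spurious difficulty. You are trying to break the long exact sequence~\eqref{eq:loc-seq} into short exact sequences $0\to A\to L_{\mcV(\fa)}F\to B\to 0$ and then argue that the images $A,B$ are cohomological functors. That last step is not obviously true in general (cohomological functors are closed under extensions in $\Mod\sfT$, but not under images or cokernels), and your appeal to Lemma~\ref{le:quillen} plus ``the concrete description of the maps'' does not settle it. Fortunately, none of this is needed. Look again at the paper's definition of a localising subcategory: closure under \emph{extensions} means closure under the middle term of any three-term exact sequence $F'\to G\to F''$ in $\Coh\sfT$, not a short exact sequence. The localisation sequence~\eqref{eq:loc-seq} gives you exactly such three-term pieces: $F\to L_\mcV F\to \Si(\gam_\mcV F)$ is exact with outer terms in $\Loc(F)$, so $L_\mcV F\in\Loc(F)$ immediately; likewise $\Si^{-1}(L_{\mcV(r)}F)\to\gam_{\mcV(r)}F\to F$ gives $\gam_{\mcV(r)}F\in\Loc(F)$. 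No intermediate objects $A,B,C,D$ are required, and the whole last paragraph can be deleted.
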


\begin{proof}
  From the localisation sequence \eqref{eq:loc-seq} it follows that it
  suffices to prove this for $\gam_\mcV$. The assertion follows from
  Lemmas~\ref{le:gamma_r_colim} and \ref{le:composite} when
  $\mcV=\mcV(\fa)$ for some ideal $\fa$.  The general case then
  follows from Lemma~\ref{le:gamma_colim}.
\end{proof}

\begin{lemma}
\label{le:Loc}
Let $X\in\sfT$ and $\fa$ be a homogeneous ideal of $R$. Then
\[
\Loc(H_X)=\Coh\Thick(X)\qquad\text{and}\qquad\Loc(\gam_{\mcV(\fa)}H_X)=\Loc(H_{\kos X\fa}).
\]
\end{lemma}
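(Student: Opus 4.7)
My plan is to prove the two equalities in turn. The first is a standard localising-subcategory argument, while the second reduces to the first by induction on the number of generators of $\fa$, using Lemmas~\ref{le:gamma_r_colim}(2), \ref{le:composite}, and \ref{le:kos} as the main ingredients.

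For the first equality, the inclusion $\Loc(H_X)\subseteq\Coh\Thick(X)$ is immediate from Lemma~\ref{le:localising}, which ensures $\Coh\Thick(X)$ is a localising subcategory containing $H_X$. For the reverse inclusion I would introduce $\sfS=\{Y\in\sfT\mid H_Y\in\Loc(H_X)\}$ and show it is a thick subcategory of $\sfT$. Closure under suspension is immediate, closure under direct summands follows because $\Loc(H_X)$ is closed under subobjects, and closure under cones is obtained by cutting the long exact sequence attached to a triangle $A\to B\to C\to\Si A$ into a short exact sequence $K\to H_C\to L$ whose outer terms are a quotient of $H_B$ and a subobject of $H_{\Si A}$; both lie in $\Loc(H_X)$ when $H_A$ and $H_B$ do. Since $X\in\sfS$, we get $\Thick(X)\subseteq\sfS$; the result follows because every $F\in\Coh\Thick(X)$ is a filtered colimit of representable functors $H_Y$ with $Y\in\Thick(X)$ by Lemma~\ref{le:flat}, and $\Loc(H_X)$ is closed under filtered colimits.

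For the second equality I would induct on the minimal number of generators of $\fa$. In the base case $\fa=(r)$, Lemma~\ref{le:gamma_r_colim}(2) presents $\gam_{\mcV(r)}H_X$ as $\colim H_{\kos{\Si^{-1}X}{r^n}}$; since $\mcV(r^n)=\mcV(r)$, Lemma~\ref{le:kos} places each $\kos{\Si^{-1}X}{r^n}$ in $\Thick(\kos Xr)$, and the first equality puts this colimit in $\Loc(H_{\kos Xr})$. Conversely, applying the exact coproduct-preserving functor $\gam_{\mcV(r)}$ to the long exact sequence attached to the defining triangle $X\xra r\Si^dX\to\kos Xr\to\Si X$ and using $H_{\kos Xr}\cong\gam_{\mcV(r)}H_{\kos Xr}$ (valid because $H_{\kos Xr}\in\Coh(\sfT_{\mcV(r)})=(\Coh\sfT)_{\mcV(r)}$ by Proposition~\ref{pr:loc-closed}) exhibits $H_{\kos Xr}$ as an extension built from a cokernel and a kernel of morphisms between suspensions of $\gam_{\mcV(r)}H_X$, so $H_{\kos Xr}\in\Loc(\gam_{\mcV(r)}H_X)$. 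For the inductive step, set $\fa=(r_1,\dots,r_n)$ and $\fa'=(r_1,\dots,r_{n-1})$; Lemma~\ref{le:composite} gives $\gam_{\mcV(\fa)}\cong\gam_{\mcV(r_n)}\comp\gam_{\mcV(\fa')}$, and by construction $\kos X\fa=\kos{(\kos X\fa')}{r_n}$. Applying $\gam_{\mcV(r_n)}$ to the two memberships supplied by the induction hypothesis yields $\Loc(\gam_{\mcV(\fa)}H_X)=\Loc(\gam_{\mcV(r_n)}H_{\kos X\fa'})$, and the base case applied to the pair $(\kos X\fa',r_n)$ identifies the right-hand side with $\Loc(H_{\kos X\fa})$.

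The main subtlety throughout is that $\Coh\sfT$ is only an exact category rather than a triangulated one, so each passage from a long exact sequence to an extension of shorter pieces relies on closure of localising subcategories in $\Coh\sfT$ under subobjects and quotients — precisely the structural feature emphasised just before this lemma in the paper.
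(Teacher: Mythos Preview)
Your argument is correct and follows the same overall architecture as the paper's proof: the first equality via the observation that $Y\in\Thick(X)$ forces $H_Y\in\Loc(H_X)$ (which you spell out carefully), and the second via Lemma~\ref{le:gamma_r_colim} plus induction on the number of generators using Lemma~\ref{le:composite}. The one place where the paper is noticeably more economical is the inclusion $H_{\kos X\fa}\in\Loc(\gam_{\mcV(\fa)}H_X)$: rather than folding this into the induction and unwinding exact sequences as you do, the paper handles it in one stroke for arbitrary $\fa$. Since $\kos X\fa\in\Thick(X)$, the first equality already gives $H_{\kos X\fa}\in\Loc(H_X)$; applying the exact coproduct-preserving functor $\gam_{\mcV(\fa)}$ and using $\gam_{\mcV(\fa)}H_{\kos X\fa}\cong H_{\kos X\fa}$ (as $\kos X\fa\in\sfT_{\mcV(\fa)}$) yields $H_{\kos X\fa}\in\Loc(\gam_{\mcV(\fa)}H_X)$ directly. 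Your route works, but the paper's shortcut is worth knowing: it exploits the first equality a second time and bypasses the exact-sequence bookkeeping entirely.
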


\begin{proof}
Both assertions use the following observation:
\[
Y\in\Thick(X)\quad \implies\quad H_{Y}\in \Loc(H_{X}).
\]
From this it follows that $\Coh\Thick(X)\subseteq \Loc(H_X)$, while
the reverse inclusion is by Lemma~\ref{le:localising}. This settles
the first of the desired equalities.

For the second one, note that $\gam_{\mcV(r)} H_X\in \Loc(H_{\kos X
  r})$ for any homogeneous element $r\in R$, by
Lemma~\ref{le:gamma_r_colim}. Thus an induction on the number of
generators of $\fa$ shows that $\gam_{\mcV(\fa)}H_X\in\Loc(H_{\kos
  X\fa})$. Conversely, $H_{\kos X \fa}$ belongs to $\Loc(H_X)$, by the
observation above, so $\gam_{\mcV(\fa)}H_{\kos X \fa}\cong H_{\kos X
  \fa}$ belongs to $\Loc(\gam_{\mcV(\fa)} H_X)$.
\end{proof}

\subsection*{The local-global principle}

The following result is the analogue of a local-global principle for
compactly generated triangulated categories
\cite[Theorem~3.1]{\bik:2011a}.

\begin{theorem}[Local-global principle]
\label{th:lgp_coh}
Let $F\in\Coh\sfT$. Then
\[
\Loc(F) =\Loc(\{\gam_\fp F\mid\fp\in\Spec R\})=\Loc(\{F_\fp\mid\fp\in\Spec R\}).
\]
\end{theorem}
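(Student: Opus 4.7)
The plan is to reduce everything to showing $F \in \sfC_1 := \Loc(\{\gam_\fp F : \fp \in \Spec R\})$ and to prove this by noetherian induction on $\Spec R$. Writing $\sfC_2 := \Loc(\{F_\fp : \fp \in \Spec R\})$, the inclusions $\sfC_1 \subseteq \sfC_2 \subseteq \Loc(F)$ come essentially for free from Proposition~\ref{pr:colim}: one has $F_\fp = L_{\mcZ(R \setminus \fp)} F \in \Loc(F)$, and $\gam_\fp F = \gam_{\mcV(\fp)}(F_\fp) \in \Loc(F_\fp) \subseteq \sfC_2$. Hence, once $F \in \sfC_1$ is established, all three subcategories coincide.

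My main step would be to prove, by noetherian induction on the closed subsets $\mcV(\fa) \subseteq \Spec R$, that $\gam_{\mcV(\fa)} F \in \sfC_1$ for every homogeneous ideal $\fa$ of $R$. Once this is in hand, any specialisation closed $\mcV$ is a directed union of its closed subsets (finite unions of which remain closed, since $\mcV(\fb_1) \cup \mcV(\fb_2) = \mcV(\fb_1 \fb_2)$), so Lemma~\ref{le:gamma_colim} yields $\gam_\mcV F \in \sfC_1$; specialising to $\mcV = \Spec R$ gives $F = \gam_{\Spec R} F \in \sfC_1$.

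For the inductive step, I would assume $\gam_{\mcV(\fa')} F \in \sfC_1$ for every $\mcV(\fa') \subsetneq \mcV(\fa)$, pick a prime $\fp \in \mcV(\fa)$ minimal under inclusion (available since $R$ is noetherian), and set $\mcW := \mcV(\fa) \setminus \{\fp\}$, which is specialisation closed by the minimality of $\fp$. Applying the localisation sequence \eqref{eq:loc-seq} for the pair $(\gam_\mcW, L_\mcW)$ to $\gam_{\mcV(\fa)} F$, and using $\gam_\mcW \gam_{\mcV(\fa)} \cong \gam_\mcW$ (Corollary~\ref{co:gamma-L}, as $\mcW \subseteq \mcV(\fa)$) together with $L_\mcW \gam_{\mcV(\fa)} \cong \gam_\fp$ (Corollary~\ref{co:gammap-variants}, as $\mcV(\fa) \setminus \mcW = \{\fp\}$), I obtain an exact sequence
\[
\cdots \to \gam_\mcW F \to \gam_{\mcV(\fa)} F \to \gam_\fp F \to \cdots
\]
in $\Coh \sfT$. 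The right term lies in $\sfC_1$ by definition. For the left, I would express $\mcW$ as the directed union of its closed subsets $\mcV(\fb)$; since $\fp \notin \mcW$, each such $\mcV(\fb)$ is strictly contained in $\mcV(\fa)$, so Lemma~\ref{le:gamma_colim} together with the inductive hypothesis exhibits $\gam_\mcW F$ as a filtered colimit of objects of $\sfC_1$, hence itself in $\sfC_1$. The three-term exact sequence then forces $\gam_{\mcV(\fa)} F \in \sfC_1$, closing the induction.

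The only delicate point I anticipate is the assembly of the three-term exact sequence in the inductive step: correctly invoking the commutation identities for $\gam_\mcW, \gam_{\mcV(\fa)}, L_\mcW$ from Corollaries~\ref{co:gamma-L} and~\ref{co:gammap-variants}, and using the minimality of $\fp$ to ensure that $\mcW$ is both specialisation closed and a directed union of closed subsets strictly inside $\mcV(\fa)$. Everything else is routine bookkeeping with the filtered colimits inside the localising subcategory $\sfC_1$.
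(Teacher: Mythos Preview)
Your proof is correct and uses the same core ingredients as the paper---Proposition~\ref{pr:colim}, the localisation sequence, Corollaries~\ref{co:gamma-L} and~\ref{co:gammap-variants}, and Lemma~\ref{le:gamma_colim}---but organises the argument differently. The paper proceeds via Zorn's lemma: it considers the set of specialisation closed $\mcV$ with $\gam_\mcV F\in\sfC_1$, notes this set is closed under directed unions, takes a maximal element, and enlarges it by adjoining a prime $\fp$ \emph{maximal} in the complement $\Spec R\setminus\mcV$ (so that $\mcV\cup\{\fp\}$ is again specialisation closed). You instead run noetherian induction on closed subsets and remove a prime $\fp$ \emph{minimal} in $\mcV(\fa)$; the resulting three-term piece of the localisation sequence is the same one the paper uses, just with the roles of the two specialisation closed sets swapped. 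Your approach avoids invoking Zorn but pays for it by needing a separate passage from closed sets to arbitrary specialisation closed sets via Lemma~\ref{le:gamma_colim} at the end, whereas the paper absorbs that step into the closure-under-directed-unions clause of the Zorn argument. One small omission: your inductive step begins by choosing a minimal prime in $\mcV(\fa)$, so you should mention the trivial base case $\mcV(\fa)=\varnothing$ (where $\gam_\varnothing F=0\in\sfC_1$).
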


\begin{proof}
  The idea for this proof is taken from
  \cite[Lemma~2.10]{Neeman:1992b}.  

It follows from
  Proposition~\ref{pr:colim} that $\gam_\fp F$ and $F_\fp$ belong to
  $\Loc(F)$.

Now we set $\sfC=\Loc(\{\gam_\fp F\mid\fp\in\Spec R\})$ and prove that
$F\in\sfC$. Consider the set of specialisation closed subsets $\mcW$
of $\Spec R$ such that $\gam_\mcW F\in\sfC$. This set is non-empty,
for it contains the empty set, and it is closed under directed unions
by Lemma~\ref{le:gamma_colim}.  Thus it has a maximal element, say
$\mcV$, by Zorn's lemma. We claim that $\mcV=\Spec R$. To this end
assume $\mcV\neq\Spec R$ and choose a prime ideal $\fp$ maximal in the
subset $\Spec R\setminus \mcV$. The subset $\mcV\cup\{\fp\}$ is then
specialisation closed. Consider the localisation sequence with respect
to $\mcV$:
\[
\cdots\lto\gam_\mcV\gam_{\mcV\cup\{\fp\}}F\lto
\gam_{\mcV\cup\{\fp\}}F\lto L_\mcV\gam_{\mcV\cup\{\fp\}}F\lto\cdots
\] 
Corollaries~\ref{co:gamma-L} and \ref{co:gammap-variants} yield that
$\gam_\mcV\cong\gam_\mcV\gam_{\mcV\cup\{\fp\}}$ and $\gam_\fp\cong
L_\mcV\gam_{\mcV\cup\{\fp\}}$. Hence the terms on the left and on the
  right of the localisation sequence are in $\sfC$, and hence so is
  the one in the middle, $\gam_{\mcV\cup\{\fp\}}F$. This contradicts
  the maximality of $\mcV$ and yields the first equality.

The second equality follows from the first since $\gam_\fp
F=\gam_{\mcV(\fp)}(F_\fp)\in\Loc(F_\fp)$ for each prime $\fp$, by
Proposition~\ref{pr:colim}.
\end{proof}

Recall that the $R$-action on $\sfT$ induces an action on any
triangulated subcategory $\sfS\subseteq \sfT$ and on $\sfT/\sfS$. It
is not hard to see that the induced functor $\sfS_{\fp}\to\sfT_{\fp}$
is exact, full, and faithful. For this reason, we view $\sfS_{\fp}$ as
a triangulated subcategory of $\sfT_{\fp}$.

\begin{lemma}
\label{le:local-prime}
Let $\sfS\subseteq \sfT$ be a triangulated subcategory and
$\fp\in\Spec R$.  Then the canonical functor $\sfT_\fp\to
(\sfT/\sfS)_\fp$ induces an equivalence of triangulated categories
\[
\sfT_\fp/\sfS_\fp\stackrel{\sim}\lto (\sfT/\sfS)_\fp.
\]
\end{lemma}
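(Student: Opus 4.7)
The plan is to introduce the thick subcategory
\[
\sfW=\Thick(\sfS\cup\sfT_{\mcZ(R\setminus\fp)})\subseteq\sfT
\]
and to identify both sides of the claimed equivalence with the single Verdier quotient $\sfT/\sfW$. The main tool is the standard iterated-quotient identity
\[
\sfT/\sfV\simeq(\sfT/\sfU)/(\sfV/\sfU)\qquad\text{for thick subcategories $\sfU\subseteq\sfV\subseteq\sfT$,}
\]
combined with the description of torsion subcategories provided by Lemma~\ref{le:loc-prime}.

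For the identification $\sfT/\sfW\simeq(\sfT/\sfS)_\fp$, I take $\sfU=\sfS$ and $\sfV=\sfW$. Since the quotient $\pi\col\sfT\to\sfT/\sfS$ is $R$-linear and exact, it satisfies $\pi(\kos Xr)\cong\kos{\pi(X)}r$, and a double application of Lemma~\ref{le:loc-prime} yields
\[
\sfW/\sfS=\Thick\bigl(\pi(\sfT_{\mcZ(R\setminus\fp)})\bigr)=(\sfT/\sfS)_{\mcZ(R\setminus\fp)},
\]
so $\sfT/\sfW\simeq(\sfT/\sfS)_\fp$. For the identification $\sfT/\sfW\simeq\sfT_\fp/\sfS_\fp$, I take $\sfU=\sfT_{\mcZ(R\setminus\fp)}$ and $\sfV=\sfW$; then $\sfU$ is killed in $\sfT_\fp$, and $\sfW/\sfU$ is the thick closure of the essential image of $\sfS$ in $\sfT_\fp$. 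The Hom formula of Lemma~\ref{le:loc-prime}, applied in parallel to $\sfT$ and to $\sfS$, shows that for $X,Y\in\sfS$ one has
\[
\Hom^*_{\sfT_\fp}(X,Y)=\Hom^*_\sfT(X,Y)_\fp=\Hom^*_\sfS(X,Y)_\fp=\Hom^*_{\sfS_\fp}(X,Y),
\]
so that essential image is equivalent to $\sfS_\fp$. Composing the two equivalences produces an equivalence $\sfT_\fp/\sfS_\fp\simeq(\sfT/\sfS)_\fp$, and chasing the two factorisations of $\sfT\to(\sfT/\sfS)_\fp$ through $\sfT/\sfW$ confirms that it agrees with the functor induced by the canonical $\sfT_\fp\to(\sfT/\sfS)_\fp$.

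The main obstacle is the comparison of the essential image of $\sfS$ in $\sfT_\fp$ with $\sfS_\fp$ as a triangulated subcategory; this is handled by the Hom computation above. A minor point is that $\sfS_\fp$ need not itself be thick in $\sfT_\fp$, but the Verdier quotient $\sfT_\fp/\sfS_\fp$ coincides with the quotient by its thick closure, and the iterated-quotient identity is applied to thick subcategories, so no genuine difficulty arises.
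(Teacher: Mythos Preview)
Your argument is correct and takes a genuinely different route from the paper. The paper works through the functor category $\Coh\sfT$: it first observes that $f_\fp$ is a quotient functor because $f_\fp p = qf$ is a composite of quotient functors, and then computes $\Ker f_\fp$ by passing to $\Coh$, using the identification $\Ker(f_\fp^*)=\Coh(\sfS_\fp)$ from \eqref{eq:loc-prime} together with Lemma~\ref{le:cofinal} to see that an object of $\sfT_\fp$ lies in the kernel iff it is a direct summand of something in $\sfS_\fp$. Your approach instead stays entirely inside the world of Verdier quotients: by inserting the thick subcategory $\sfW=\Thick(\sfS\cup\sfT_{\mcZ(R\setminus\fp)})$ and applying the iterated-quotient identity twice, you recognise both $(\sfT/\sfS)_\fp$ and $\sfT_\fp/\sfS_\fp$ as $\sfT/\sfW$, with Lemma~\ref{le:loc-prime} doing the work of matching the torsion subcategories on each side. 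Your method is more elementary and self-contained, requiring none of the $\Coh$ machinery; the paper's method, on the other hand, fits seamlessly into its ambient framework and yields as a by-product the explicit description of $\Ker f_\fp$ as the idempotent completion of $\sfS_\fp$ inside $\sfT_\fp$, which your approach does not directly exhibit. Your closing remarks about thick closures and about checking that the resulting equivalence is the one induced by the canonical functor are appropriate; the latter is indeed just the universal property of the Verdier quotient applied to the two factorisations of $\sfT\to\sfT/\sfW$.
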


\begin{proof}
  Consider the commutative diagrams below. The one on the left
  is clear from the constructions and induces the one on the right.
\begin{gather*}
\begin{gathered}
\xymatrix{
\sfT\ar[r]^-f\ar[d]^p&\sfT/\sfS\ar[d]^q\\
\sfT_\fp\ar[r]^-{f_\fp}&(\sfT/\sfS)_\fp}
\end{gathered}
\qquad\qquad
\begin{gathered}
\xymatrix{
\Coh \sfT \ar@{<-}[r]^-{f_{*}}\ar@{<-}[d]^{p_{*}} & \Coh \sfT/\sfS\ar@{<-}[d]^-{q_{*}}\\
\Coh(\sfT_\fp)\ar@{<-}[r]^-{(f_\fp)_{*}}&\Coh(\sfT/\sfS)_\fp}
\end{gathered}
\end{gather*}
Since $f$ and $q$ are quotient functors, so is their composition
$qf$. Then $f_{\fp}p$ and $p$ are quotient functors, and from this it
is not hard to verify that so is $f_{\fp}$. We claim that its kernel
consists precisely of direct summands of objects of $\sfS_{\fp}$, and
the statement would then follow.

As to the claim: In the display above, the functors in the square on
the right are all fully faithful, since they are induced by quotient
functors. Hence we view all the categories in the diagram as
subcategories of $\Coh \sfT$. Recall from Corollary~\ref{co:loc-seq}
that $\Ker(f^{*}) = \Coh \sfS$. Using \eqref{eq:loc-prime} it follows
that $\Ker(f_{\fp}^{*}) = \Coh (\sfS_{\fp})$.  In particular, for an
object $X\in\sfT_{\fp}$, one has $f_\fp(X)=0$ iff $f_{\fp}^{*}(H_X)=0$
iff $X$ is a direct summand of an object in $\sfS_\fp$; the second
equivalence is by Lemma \ref{le:cofinal}. This justifies the claim.
\end{proof}

For $\fp\in\Spec R$ we set $\gam_\fp\sfT=(\sfT_\fp)_{\mcV(\fp)}$. This
yields the following diagram
\[
\xymatrix{
\sfT  \ar@{->>}[r]&\sfT_\fp & \ar@{>->}[l]\,\gam_\fp\sfT
}\]
and henceforth we make the identification 
\[
\Coh\gam_\fp\sfT=(\Coh\sfT)_\fp\cap (\Coh\sfT)_{\mcV(\fp)}.
\]

\begin{corollary} 
\label{co:classification}
Taking a localising subcategory $\sfC\subseteq\Coh\sfT$ to the family 
\[
(\sfC\cap\Coh\gam_\fp\sfT)_{\fp\in\Spec R}
\]
induces a bijection between
\begin{enumerate}
\item[--] the localising subcategories of $\Coh\sfT$, and
\item[--] the families $(\sfC(\fp))_{\fp\in\Spec R}$ with $\sfC(\fp)$
a localising subcategory of $\Coh\gam_{\fp}\sfT$.
\end{enumerate}
\end{corollary}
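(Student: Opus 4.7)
The plan is to exhibit an explicit inverse
\[
\Psi\colon (\sfC(\fp))_{\fp \in \Spec R} \longmapsto \Loc\Bigl(\bigcup_{\fp \in \Spec R} \sfC(\fp)\Bigr),
\]
where each $\sfC(\fp)$ is viewed as a subcategory of $\Coh\sfT$ via the identification $\Coh\gam_\fp\sfT = (\Coh\sfT)_\fp \cap (\Coh\sfT)_{\mcV(\fp)}$ preceding the statement, and to verify that $\Psi$ is inverse to the map $\Phi\colon \sfD \mapsto (\sfD \cap \Coh\gam_\fp\sfT)_\fp$ of the corollary.

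The key input is an orthogonality observation. If $G \in \Coh\gam_\fq\sfT$, then $G$ is both $\mcV(\fq)$-torsion and $\fq$-local, so repeated use of Proposition~\ref{pr:support-V} yields
\[
\supp_R G \subseteq \mcV(\fq) \cap \{\fr \in \Spec R \mid \fr \subseteq \fq\} = \{\fq\},
\]
and, for any $\fp \in \Spec R$, $\supp_R \gam_\fp G = \{\fp\} \cap \supp_R G$. Combined with Proposition~\ref{pr:support}, this forces $\gam_\fp G = 0$ whenever $\fp \neq \fq$, while Corollary~\ref{co:loc-seq} applied to $\gam_{\mcV(\fq)}$ on the $\mcV(\fq)$-torsion $\fq$-local object $G$ shows $\gam_\fq G \cong G$. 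Moreover, each $\gam_\fp$ is exact and preserves coproducts.

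For $\Psi\Phi = \id$, let $\sfD \subseteq \Coh\sfT$ be localising. For $F \in \sfD$ and $\fp \in \Spec R$, Proposition~\ref{pr:colim} gives $\gam_\fp F \in \Loc(F) \subseteq \sfD$, while Corollary~\ref{co:commute} places $\gam_\fp F$ in $\Coh\gam_\fp\sfT$; hence $\gam_\fp F \in \sfD \cap \Coh\gam_\fp\sfT$. The local-global principle (Theorem~\ref{th:lgp_coh}) then gives
\[
F \in \Loc\bigl(\{\gam_\fp F \mid \fp \in \Spec R\}\bigr) \subseteq \Loc\Bigl(\bigcup_{\fp}(\sfD \cap \Coh\gam_\fp\sfT)\Bigr),
\]
which is the nontrivial inclusion; the reverse is trivial.

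For $\Phi\Psi = \id$, set $\sfD = \Loc(\bigcup_\fp \sfC(\fp))$ and fix $\fq \in \Spec R$. The inclusion $\sfC(\fq) \subseteq \sfD \cap \Coh\gam_\fq\sfT$ is immediate. For the reverse, the orthogonality observation shows $\gam_\fq$ annihilates $\sfC(\fp)$ for every $\fp \neq \fq$ and restricts to the identity on $\sfC(\fq)$; since $\gam_\fq$ is exact and preserves coproducts, it carries $\sfD$ into $\Loc(\sfC(\fq)) = \sfC(\fq)$, so any $F \in \sfD \cap \Coh\gam_\fq\sfT$ satisfies $F \cong \gam_\fq F \in \sfC(\fq)$. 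The main obstacle is precisely this last step, which hinges on the orthogonality observation together with the exactness and coproduct-preservation of $\gam_\fq$; everything else is bookkeeping around the local-global principle.
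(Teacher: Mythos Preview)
Your proof is correct and follows exactly the approach the paper indicates: the paper's proof consists of the single sentence ``The inverse map takes a family $(\sfC(\fp))_{\fp\in\Spec R}$ to the smallest localising subcategory of $\Coh\sfT$ containing all $\sfC(\fp)$,'' which is precisely your map $\Psi$. You have supplied the details the paper leaves implicit --- the use of Theorem~\ref{th:lgp_coh} and Proposition~\ref{pr:colim} for $\Psi\Phi=\id$, and the orthogonality argument via support for $\Phi\Psi=\id$ --- and these details are sound.
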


\begin{proof}
  The inverse map takes a family $(\sfC(\fp))_{\fp\in\Spec R}$ to the
  smallest localising subcategory of $\Coh\sfT$ containing all
  $\sfC(\fp)$.
\end{proof}

\begin{remark}\label{re:classification}
  There is an analogue of Corollary~\ref{co:classification} for thick
  subcategories of $\sfT$ since each thick subcategory
  $\sfS\subseteq\sfT$ is determined by the localising subcategory
  $\Coh\sfS\subseteq\Coh\sfT$; see Lemma~\ref{le:localising}.
\end{remark}

\subsection*{Consequences of the local-global principle}
For $X\in\sfT$ and $\fp\in\Spec R$, we set $X(\fp)=(\kos X \fp)_\fp$
and identify this with $\kos{(X_\fp)} \fp$.  Note that
Lemma~\ref{le:Loc} implies
\begin{equation}
\label{eq:kos} 
\Loc(\gam_\fp H_X)=\Loc(H_{X(\fp)}).
\end{equation}

The following is the local-global principle for $\sfT$ announced in
the introduction.

\begin{theorem}[Local-global principle]
\label{th:lgp}
Let $\sfS$ be a thick subcategory of $\sfT$. Then the following
conditions are equivalent for an object $X$ in $\sfT$:
\begin{enumerate}
\item $X$ belongs to $\sfS$.
\item $X_\fp$ belongs to $\Thick(\sfS_\fp)$ for each $\fp\in\Spec R$.
\item $X(\fp)$ belongs to $\Thick(\sfS_\fp)$ for each $\fp\in\Spec R$.
\end{enumerate}
\end{theorem}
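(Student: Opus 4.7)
The plan is to reduce Theorem~\ref{th:lgp} to its cohomological counterpart, Theorem~\ref{th:lgp_coh}, applied to the representable functor $H_X\in\Coh\sfT$. The implications (1)$\Rightarrow$(2)$\Rightarrow$(3) are straightforward: if $X\in\sfS$, then $X_\fp$ lies in $\sfS_\fp\subseteq\Thick(\sfS_\fp)$ by functoriality of localisation; and since $X(\fp)=\kos{X_\fp}{\fp}$ is built by iterated cones from $X_\fp$, it lies in $\Thick(X_\fp)\subseteq\Thick(\sfS_\fp)$.

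The substantive implication is (3)$\Rightarrow$(1). The strategy is to show that (3) forces $H_X\in\Coh\sfS$; then Lemma~\ref{le:cofinal}, applied to $\id\colon H_X\to H_X$, gives a factorisation through $H_Y$ for some $Y\in\sfS$, exhibiting $X$ as a direct summand of $Y$, and thickness of $\sfS$ yields $X\in\sfS$. To prove $H_X\in\Coh\sfS$, invoke Theorem~\ref{th:lgp_coh} together with \eqref{eq:kos} to obtain
\[
\Loc(H_X)=\Loc(\{\gam_\fp H_X\mid\fp\in\Spec R\})=\Loc(\{H_{X(\fp)}\mid\fp\in\Spec R\}).
\]
Since $\Coh\sfS$ is a localising subcategory of $\Coh\sfT$ by Lemma~\ref{le:localising}, it suffices to verify that $H_{X(\fp)}\in\Coh\sfS$ for every $\fp\in\Spec R$.

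Fix $\fp$ and assume $X(\fp)\in\Thick(\sfS_\fp)$. Working inside $\Coh\sfT_\fp$, Lemma~\ref{le:Loc} yields
\[
\Loc(H_{X(\fp)})=\Coh\Thick(X(\fp))\subseteq\Coh\Thick(\sfS_\fp)=\Loc(\{H_{S_\fp}\mid S\in\sfS\}).
\]
Pushing this inclusion forward via the fully faithful functor $q_*\col\Coh\sfT_\fp\to\Coh\sfT$ from \eqref{eq:loc-prime} — which is exact and preserves coproducts by Lemma~\ref{le:exact-fun} — transports $H_{X(\fp)}$ into the localising subcategory of $\Coh\sfT$ generated by the functors $q_*H_{S_\fp}=(H_S)_\fp$, for $S\in\sfS$. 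Finally, each $(H_S)_\fp=L_{\mcZ(R\setminus\fp)}(H_S)$ belongs to $\Loc(H_S)$ by Proposition~\ref{pr:colim}, and $\Loc(H_S)=\Coh\Thick(S)\subseteq\Coh\sfS$ by Lemma~\ref{le:Loc}. Since $\Coh\sfS$ is localising, this places $H_{X(\fp)}$ in $\Coh\sfS$, completing the argument.

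The main obstacle is the bookkeeping between $\Coh\sfT$ and $\Coh\sfT_\fp$: the hypothesis (3) lives naturally in $\sfT_\fp$ and yields information in $\Coh\sfT_\fp$, whereas the conclusion must be stated in $\Coh\sfT$. The passage between these is handled by $q_*$ and the identification \eqref{eq:loc-prime}, but one must verify at each step that the localising structures transport correctly — in particular, that pushforward of representables produces localised representables of the form $(H_S)_\fp$, which already lie in $\Coh\sfS$.
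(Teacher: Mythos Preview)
Your proof is correct and follows essentially the same route as the paper's. The paper's version is more compressed: it directly invokes the identification $\Coh(\sfS_\fp)=(\Coh\sfS)_\fp\subseteq\Coh\sfS$ (Proposition~\ref{pr:loc-prime} applied to $\sfS$, together with Proposition~\ref{pr:colim}) to place $H_{X(\fp)}$ in $\Coh\sfS$, whereas you unpack this same identification through an explicit pushforward along $q_*$; the underlying argument is identical.
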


\begin{proof}
Evidently (1) $\Ra$ (2) and (2) $\Ra$ (3).

Assume (3) holds. We work in $\Coh\sfT$. For each $\fp\in\Spec R$, the
hypothesis implies the first inclusion below:
\[
H_{X(\fp)}\in\Coh (\sfS_\fp)=(\Coh \sfS)_\fp\subseteq\Coh\sfS.
\]
The equality is by Proposition~\ref{pr:loc-prime}. Thus $\gam_\fp
H_X\in\Coh\sfS$ for all $\fp$, by \eqref{eq:kos}. It follows from
Theorem~\ref{th:lgp_coh} that $H_X$ belongs to $\Coh\sfS$, and hence
that $X\in\sfS$.
\end{proof}

\begin{theorem}
\label{th:Hom}
For any pair of objects $X,Y$ in $\sfT$ the following holds:
\begin{align*}
  \Hom^*_\sfT(X,Y)=0\quad&\iff\quad \Hom^*_{\sfT_\fp}(X_\fp,Y_\fp)=0
  \text{ for all  }\fp\in\Spec R\\
  &\iff\quad \Hom^*_{\sfT_\fp}(X(\fp),Y(\fp))=0 \text{ for all
  }\fp\in\Spec R.
\end{align*}
\end{theorem}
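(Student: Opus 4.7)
The equivalence of (1) and (2) is immediate: Lemma~\ref{le:loc-prime} gives $\Hom^*_{\sfT_\fp}(X_\fp, Y_\fp)\cong \Hom^*_\sfT(X,Y)_\fp$, and an $R$-module vanishes iff all its prime localisations do. The implication from the middle vanishing to $\Hom^*_{\sfT_\fp}(X(\fp), Y(\fp))=0$ follows by iterating the long exact sequences of Hom induced by the triangles that define $X(\fp)=\kos{X_\fp}{\fp}$ and $Y(\fp)=\kos{Y_\fp}{\fp}$, once in each variable.

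The substance is the converse: assuming $\Hom^*_{\sfT_\fp}(X(\fp),Y(\fp))=0$ for every $\fp\in\Spec R$, conclude $\Hom^*_\sfT(X,Y)=0$. I would work in $\Coh\sfT$ and establish $\Hom^*(H_X,H_Y)=0$ by repeated use of the local-global principle, Theorem~\ref{th:lgp_coh}. The enabling technical point is that for any $F,G\in\Coh\sfT$ both
\[
\sfD'_G=\{F'\in\Coh\sfT\mid\Hom^*(F',G)=0\}\quad\text{and}\quad\sfD''_F=\{G'\in\Coh\sfT\mid\Hom^*(F,G')=0\}
\]
are localising subcategories. Closure under coproducts and suspensions is routine; closure under extensions follows from the exact-category structure of $\Coh\sfT$ (Lemma~\ref{le:quillen}): a conflation $F'\to F\to F''$ gives a left exact sequence $0\to\Hom(F'',G)\to\Hom(F,G)\to\Hom(F',G)$, so vanishing of the ends forces vanishing in the middle, and analogously for the other variable.

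The central claim I would prove is $\Hom^*(\gam_\fq H_X,\gam_\fp H_Y)=0$ for every pair $\fp,\fq\in\Spec R$. Granting this, two applications of Theorem~\ref{th:lgp_coh} finish the argument: for fixed $\fp$, the localising subcategory $\sfD'_{\gam_\fp H_Y}$ contains all $\gam_\fq H_X$ and hence also $H_X\in\Loc(\{\gam_\fq H_X\}_\fq)$, giving $\Hom^*(H_X,\gam_\fp H_Y)=0$; varying $\fp$, the localising subcategory $\sfD''_{H_X}$ contains all $\gam_\fp H_Y$ and hence also $H_Y\in\Loc(\{\gam_\fp H_Y\}_\fp)$, yielding $\Hom^*_\sfT(X,Y)=\Hom^*(H_X,H_Y)=0$.

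The central claim splits into two cases. When $\fp\neq\fq$, choose a homogeneous element $r$ lying in exactly one of $\fp,\fq$: then $r$ acts invertibly on one of $\gam_\fq H_X,\gam_\fp H_Y$ (by the appropriate locality) and locally nilpotently on the other (by the corresponding $\mcV(\cdot)$-torsion property), and since the two induced $R$-actions on $\Hom^*$ agree up to sign this forces the group to vanish. When $\fp=\fq$, equation~\eqref{eq:kos} yields $\Loc(\gam_\fp H_X)=\Loc(H_{X(\fp)})$ and $\Loc(\gam_\fp H_Y)=\Loc(H_{Y(\fp)})$; the hypothesis gives $H_{Y(\fp)}\in\sfD''_{H_{X(\fp)}}$, so $\Loc(H_{Y(\fp)})\subseteq\sfD''_{H_{X(\fp)}}$ and in particular $\gam_\fp H_Y\in\sfD''_{H_{X(\fp)}}$; dually $H_{X(\fp)}\in\sfD'_{\gam_\fp H_Y}$, so $\Loc(H_{X(\fp)})\subseteq\sfD'_{\gam_\fp H_Y}$, giving $\gam_\fp H_X\in\sfD'_{\gam_\fp H_Y}$, as required. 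The principal subtlety will be verifying that $\sfD'_G$ is localising; once that exactness fact is in hand, the remainder is a careful layering of Theorem~\ref{th:lgp_coh} with the orthogonality of distinct primes.
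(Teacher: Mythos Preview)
Your instinct that $\sfD'_G$ is the crux is correct, but the argument you give does not establish it, and it is not clear the claim is even true. In the paper, a localising subcategory of $\Coh\sfT$ must be closed under three-term pointwise exact sequences $F'\to F\to F''$, which forces closure under subobjects (take $F'=0$). Your conflation argument shows only that $\sfD'_G$ is closed under admissible short exact sequences, because $\Hom(-,G)$ is left exact on those; it says nothing about an arbitrary monomorphism $F\hookrightarrow F''$ in $\Coh\sfT$ with $\Hom^*(F'',G)=0$, since a map $F\to G$ need not extend to $F''$. Yet closure under such three-term extensions is exactly what the proof of Theorem~\ref{th:lgp_coh} uses: it builds $F$ from the $\gam_\fp F$ through the long exact localisation sequences~\eqref{eq:loc-seq}, not through conflations. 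So the step ``$\sfD'_{\gam_\fp H_Y}$ contains all $\gam_\fq H_X$, hence $H_X$'' is unjustified.

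The paper's proof avoids this asymmetry by working only in the covariant variable. Setting $\sfS=\Thick(X)$, one has $\Hom^*_\sfT(X,Y)=0$ iff $H_Y\in\Coh\sfT/\sfS$; this subcategory is precisely your $\sfD''_{H_X}$, and it \emph{is} localising by Lemma~\ref{le:localising}, essentially because $G\mapsto G^*(X)$ is exact on pointwise exact sequences. Theorem~\ref{th:lgp_coh} then applies to $H_Y$ alone, and Lemma~\ref{le:local-prime} together with~\eqref{eq:kos} translates membership of $\gam_\fp H_Y$ in $\Coh\sfT/\sfS$ into the stated vanishing of $\Hom^*_{\sfT_\fp}(X_\fp,Y_\fp)$ and $\Hom^*_{\sfT_\fp}(X(\fp),Y(\fp))$. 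Your argument can be repaired along the same lines: drop $\sfD'_G$ entirely, keep the $\sfD''_{H_{X(\fp)}}$ step to obtain $(\gam_\fp H_Y)^*(X(\fp))=0$, and then pass to $(\gam_\fp H_Y)^*(X)=0$ directly using that $(\gam_\fp H_Y)^*$ is $\fp$-local and $\mcV(\fp)$-torsion together with~\eqref{eq:koszul-object}, as in the final paragraph of the paper's proof.
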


\begin{proof}
  Let $\sfS=\Thick(X)$. Then Theorem~\ref{th:lgp_coh} yields the
  following equivalences:
\begin{align*}
  \Hom^*_\sfT(X,Y)=0\quad&\iff\quad H_Y\in\Coh \sfT/\sfS\\
    &\iff\quad H_{Y_\fp}\in (\Coh \sfT/\sfS)_\fp \text{ for all }\fp\in\Spec R\\
   &\iff\quad \gam_\fp H_{Y}\in (\Coh \sfT/\sfS)_\fp \text{ for all }\fp\in\Spec R.
\end{align*}
Using the identification $\Coh (\sfT_\fp/\sfS_\fp)=\Coh
(\sfT/\sfS)_\fp=(\Coh \sfT/\sfS)_\fp$ from Lemma~\ref{le:local-prime}
and the identity \eqref{eq:kos}, we obtain
\begin{align*}
  \Hom^*_\sfT(X,Y)=0\quad&\iff\quad \Hom^*_{\sfT_\fp}(X_\fp,Y_\fp)=0
  \text{ for all  }\fp\in\Spec R\\
  &\iff\quad \Hom^*_{\sfT_\fp}(X_\fp,Y(\fp))=0 \text{ for all
  }\fp\in\Spec R.
\end{align*}
In the last condition, $X_\fp$ can be replaced by $X(\fp)$. This
follows from the general fact that for any homogenous ideal $\fa$ of
$R$ and any pair of objects $U,V$ in $\sfT$
\[
\Hom_\sfT^*(U,V)=0\quad\iff \quad\Hom_\sfT^*(\kos U\fa,V)=0
\]
when $\Hom_\sfT^*(U,V)$ is $\mcV(\fa)$-torsion; for a proof use
\eqref{eq:koszul-object} or see \cite[Lemma~5.11]{\bik:2008a}.
\end{proof}

\section{Tensor triangulated categories}

Let $(\sfT,\otimes,\one)$ be a tensor triangulated category that is
essentially small.  The tensor product $\otimes\col
\sfT\times\sfT\to\sfT$ is then symmetric monoidal, exact in each
variable, and admits a unit $\one$.  The tensor product on $\sfT$
extends to a tensor product $\Coh\sfT\times\Coh\sfT\to\Coh\sfT$ that
we denote again by $\otimes$. We list the basic (and defining)
properties. For objects $X,Y\in\sfT$ and $F,G\in\Coh\sfT$ we have:
\begin{enumerate} 
\item $H_X\otimes H_Y\cong H_{X\otimes Y}$. 
\item $F\otimes -$ and $-\otimes G$  are exact and preserve filtered colimits.
\item $F\otimes G\cong G\otimes F$.
\end{enumerate}

\begin{lemma}
$(\Coh\sfT,\otimes,H_\one)$ is a  symmetric monoidal
category. \qed
\end{lemma}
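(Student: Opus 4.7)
The plan is to reduce all the symmetric monoidal structure on $\Coh\sfT$ to the corresponding structure on $\sfT$ via the Yoneda embedding, using that every $F\in\Coh\sfT$ is a filtered colimit of representables (Lemma~\ref{le:flat}) and that $\otimes$ commutes with filtered colimits in each variable (property~(2)). Concretely, $\Coh\sfT$ is the ind-completion of $\sfT$, and the claim is the standard fact that a symmetric monoidal structure lifts uniquely along such a completion.

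First I would construct the associativity constraint. For representables, property~(1) applied twice gives $(H_X\otimes H_Y)\otimes H_Z\cong H_{(X\otimes Y)\otimes Z}$ and $H_X\otimes(H_Y\otimes H_Z)\cong H_{X\otimes(Y\otimes Z)}$, so the associator $\alpha^{\sfT}$ of $\sfT$ yields, via the Yoneda embedding, an isomorphism
\[
\alpha_{H_X,H_Y,H_Z}\col (H_X\otimes H_Y)\otimes H_Z\xra{\ \sim\ } H_X\otimes(H_Y\otimes H_Z),
\]
natural in $X,Y,Z$. Writing arbitrary $F,G,H$ as filtered colimits of representables via \eqref{eq:slice} and using property~(2) (tensoring commutes with filtered colimits in each variable), these isomorphisms assemble into a natural isomorphism $\alpha_{F,G,H}$. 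The unit constraints $\lambda_F\col H_\one\otimes F\xra{\sim}F$ and $\rho_F\col F\otimes H_\one\xra{\sim}F$ are obtained in the same way from the identifications $H_\one\otimes H_X\cong H_{\one\otimes X}\cong H_X$ coming from (1) and the unit of $\sfT$. The symmetry $\gamma_{F,G}\col F\otimes G\xra{\sim}G\otimes F$ is provided by property~(3), and for representables one checks that it is induced by the braiding on $\sfT$ under~(1).

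It remains to verify the coherence axioms: the pentagon for $\alpha$, the triangle relating $\alpha$ with the unitors, and the hexagon coupling $\alpha$ with $\gamma$, plus the involutivity $\gamma_{G,F}\gamma_{F,G}=\id$. On representable arguments these diagrams reduce, via (1) and the Yoneda embedding, to the corresponding coherence diagrams in the symmetric monoidal category $\sfT$ and therefore commute. For each axiom, both sides are natural transformations between functors $(\Coh\sfT)^{n}\to\Coh\sfT$ that preserve filtered colimits separately in each variable (by property~(2)); since they agree on the full subcategory of representables and every object of $\Coh\sfT$ is a filtered colimit of representables, they agree everywhere.

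The main (and only genuine) obstacle is the bookkeeping for the colimit extension: one must ensure that the associator, unitors and braiding obtained by taking colimits are well-defined independently of the chosen presentations in \eqref{eq:slice}, and are natural in $F$, $G$, $H$. This is handled by the canonical presentation using the slice category $\sfT/F$, which is functorial in $F$, so the colimit description makes every structure map natural by construction. Once this is in place, the verification above goes through formally, yielding the symmetric monoidal structure $(\Coh\sfT,\otimes,H_\one)$.
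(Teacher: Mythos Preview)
Your argument is correct and is exactly the standard ind-extension argument the paper has in mind; the paper itself gives no proof beyond the \qed, treating the statement as immediate from properties (1)--(3) and Lemma~\ref{le:flat}. Your write-up simply makes explicit the colimit bookkeeping that the authors suppress.
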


\subsection*{Strongly monoidal functors}
A functor $f$ between symmetric monoidal categories is called
\emph{strongly monoidal} if there are isomorphisms
\[
\one\xra{\sim}f(\one)\qquad\text{and}\qquad  f(X)\otimes f(Y)\xra{\sim}f(X\otimes Y)
\]
that are natural and compatible with the monoidal structures. We have
the following projection formula.

\begin{lemma}
\label{le:projection}
Let $f\colon \sfT\to\sfU$ be a strongly monoidal exact functor between
tensor triangulated categories.  For $F\in\Coh\sfT$ and
$G\in\Coh\sfU$, there is a natural morphism
\[
\alpha_{F,G}\colon F\otimes f_* (G)\lto f_*(f^*(F)\otimes G).
\]
This is an isomorphism when $\sfT$ is generated as a triangulated category by $\one$.
\end{lemma}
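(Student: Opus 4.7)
The plan is to build $\alpha_{F,G}$ by adjunction and then run a dévissage on $F$. First I would observe that since $f$ is strongly monoidal, so is its extension $f^*\colon \Coh\sfT\to\Coh\sfU$ from Lemma~\ref{le:exact-fun}: on representable functors one has
\[
f^*(H_X\otimes H_Y)=f^*(H_{X\otimes Y})=H_{f(X\otimes Y)}\cong H_{f(X)\otimes f(Y)}=f^*(H_X)\otimes f^*(H_Y),
\]
and the general case would follow by writing arbitrary objects of $\Coh\sfT$ as filtered colimits of representables (Lemma~\ref{le:flat}) and using that both $f^*$ and $\otimes$ commute with filtered colimits. Then $\alpha_{F,G}$ can be defined as the transpose under $f^*\dashv f_*$ of the composite
\[
f^*(F\otimes f_*G)\xra{\sim} f^*F\otimes f^*f_*G\xra{\id\otimes\eps} f^*F\otimes G,
\]
where $\eps\colon f^*f_*\to\Id$ is the counit of the adjunction.

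For the main claim, I would fix $G\in\Coh\sfU$ and study
\[
\sfC_G=\{F\in\Coh\sfT\mid \alpha_{F,G}\text{ is an isomorphism}\}.
\]
Both functors $F\mapsto F\otimes f_*G$ and $F\mapsto f_*(f^*F\otimes G)$ should be exact in $F$ and should preserve coproducts: the first because the tensor product on $\Coh\sfT$ is biadditive, exact, and commutes with filtered colimits; the second because $f^*$ is an exact left adjoint (Lemma~\ref{le:exact-fun}), $-\otimes G$ is exact and cocontinuous, and $f_*$, being pointwise precomposition with $f$, commutes with all colimits that exist in $\Coh\sfU$. Hence $\sfC_G$ would be a localising subcategory of $\Coh\sfT$, closed under suspensions by naturality of $\alpha$.

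Next I would verify that $H_\one\in\sfC_G$. When $F=H_\one$ the strongly monoidal structure gives $f^*H_\one\cong H_\one$, and unwinding the construction turns $\alpha_{H_\one,G}$ into the canonical isomorphism $H_\one\otimes f_*G\xra{\sim} f_*G\xra{\sim}f_*(H_\one\otimes G)$. The hypothesis $\sfT=\Thick(\one)$ together with Lemma~\ref{le:Loc} then yields
\[
\Coh\sfT=\Coh\Thick(\one)=\Loc(H_\one)\subseteq\sfC_G,
\]
which finishes the proof. The hard part will be the preliminary verification that $f^*$ is strongly monoidal: although the formula is transparent on representables, one must propagate it through the colimit presentations that define both $f^*$ and the tensor product on $\Coh\sfT$ in a coherent way, and one must ensure that the resulting unit and associativity isomorphisms are still natural in $F$ and $G$. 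Once this preparatory step is settled, the dévissage and the check on $H_\one$ are routine.
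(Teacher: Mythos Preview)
Your proposal is correct and follows essentially the same route as the paper: define $\alpha_{F,G}$ as the adjoint of $f^*(F\otimes f_*G)\xra{\sim}f^*F\otimes f^*f_*G\to f^*F\otimes G$ using that $f^*$ is strongly monoidal, check that the $F$ for which $\alpha_{F,G}$ is an isomorphism form a localising subcategory containing $H_\one$, and invoke Lemma~\ref{le:Loc}. You simply supply more detail (the explicit check for $F=H_\one$ and the reasons $\sfC_G$ is localising) than the paper does.
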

\begin{proof}
  Observe that $f^*\colon\Coh\sfT\to\Coh\sfU$ is strongly
  monoidal. This is clear for representable functors; the general case
  follows by taking filtered colimits in one argument and then in the
  other. The morphism $\alpha_{F,G}$ is the adjoint of the composition
\[
  f^*(F\otimes f_*(G)) \xra{\sim}f^*(F)\otimes f^*f_*(G)\to
f^*(F)\otimes G.
\]
Observe that the objects $F$ such that $\alpha_{F,G}$ is an
isomorphism for all $G$ form a localising subcategory of $\Coh\sfT$
containing $H_\one$.  If $\one$ generates $\sfT$, then
$\Loc(H_\one)=\Coh\sfT$, by Lemma~\ref{le:Loc}. Thus $\alpha_{F,G}$ is
an isomorphism for all $F$ and $G$.
\end{proof}

\subsection*{Cohomological localisation}

Let $R$ be a noetherian graded commutative ring acting on $\sfT$. The
cohomological (co)localisation functors arising from this action can
be expressed as tensor functors.

\begin{proposition}\label{pr:tensor}
  Let $\mcV$ be a specialisation closed subset of $\Spec R$. Then
\[\gam_\mcV\cong\gam_\mcV H_\one\otimes-\qquad\text{and}\qquad 
L_\mcV\cong L_\mcV H_\one\otimes-.\]
\end{proposition}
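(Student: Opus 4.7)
The plan is to prove the first isomorphism $\gam_{\mcV}\cong\gam_{\mcV}H_\one\otimes-$ by a dévissage, starting from the case $\mcV=\mcV(r)$ for a single homogeneous element $r\in R$, extending to finitely generated ideals, and then to arbitrary specialisation closed subsets. The second isomorphism will follow from the first via the localisation sequence \eqref{eq:loc-seq}. Since both $\gam_\mcV$ and $\gam_\mcV H_\one\otimes-$ are exact and preserve filtered colimits, at each stage of the dévissage it suffices to check the isomorphism on representables $F=H_X$.

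The base case $\mcV=\mcV(r)$, $F=H_X$ goes as follows. By Lemma~\ref{le:gamma_r_colim}(2),
\[
\gam_{\mcV(r)}H_X\cong\colim_n H_{\kos{\Si^{-1}X}{r^n}}\qquad\text{and}\qquad
\gam_{\mcV(r)}H_\one\cong\colim_n H_{\kos{\Si^{-1}\one}{r^n}}.
\]
Since $-\otimes X$ is an exact triangle-preserving endofunctor of $\sfT$ and the central $R$-action on $\Si^{-1}X\cong\Si^{-1}\one\otimes X$ is obtained from the one on $\Si^{-1}\one$ by tensoring with $\id_X$, tensoring the defining triangle of $\kos{\Si^{-1}\one}{r^n}$ with $X$ yields a canonical isomorphism $\kos{\Si^{-1}\one}{r^n}\otimes X\cong\kos{\Si^{-1}X}{r^n}$ compatible with the cone maps to $X$. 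Passing to the filtered colimit, using $H_A\otimes H_B\cong H_{A\otimes B}$ and the fact that $-\otimes H_X$ preserves filtered colimits, one obtains a natural isomorphism $\gam_{\mcV(r)}H_\one\otimes H_X\xra{\sim}\gam_{\mcV(r)}H_X$ compatible with the canonical maps to $H_X$. Writing an arbitrary $F\in\Coh\sfT$ as a filtered colimit of representables then extends this to all $F$.

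For $\mcV=\mcV(\fa)$ with $\fa=(r_1,\ldots,r_n)$ finitely generated, Lemma~\ref{le:composite} and induction on $n$ yield
\[
\gam_{\mcV(\fa)}F\;\cong\;\gam_{\mcV(r_n)}H_\one\otimes\cdots\otimes\gam_{\mcV(r_1)}H_\one\otimes F;
\]
evaluating at $F=H_\one$ identifies the iterated tensor with $\gam_{\mcV(\fa)}H_\one$, so $\gam_{\mcV(\fa)}F\cong\gam_{\mcV(\fa)}H_\one\otimes F$. For a general specialisation closed subset $\mcV=\bigcup_\alpha\mcV(\fa_\alpha)$, Lemma~\ref{le:gamma_colim} together with the compatibility of $-\otimes F$ with filtered colimits completes the proof of the first isomorphism.

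For the second isomorphism, tensoring the localisation sequence \eqref{eq:loc-seq} for $H_\one$ with $F$ (which is exact because $-\otimes F$ is exact) produces a long exact sequence
\[
\cdots\lto\gam_\mcV H_\one\otimes F\lto F\lto L_\mcV H_\one\otimes F\lto\Si(\gam_\mcV H_\one\otimes F)\lto\cdots
\]
Under the first isomorphism the leftmost displayed term is canonically identified with $\gam_\mcV F\in\Coh(\sfT_\mcV)$. Applying the exact functor $\gam_\mcV$ to this sequence and using $\gam_\mcV\gam_\mcV\cong\gam_\mcV$ (Lemma~\ref{le:rules-commute}) forces $\gam_\mcV(L_\mcV H_\one\otimes F)=0$, whence $L_\mcV H_\one\otimes F\in\Coh(\sfT/\sfT_\mcV)$ by Corollary~\ref{co:loc-seq}. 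The uniqueness clause of Proposition~\ref{pr:loc-seq} then supplies the desired natural isomorphism $L_\mcV H_\one\otimes F\cong L_\mcV F$. The chief obstacle is the base case: one must invoke tensor-compatibility of the central $R$-action to identify $\kos{\Si^{-1}\one}{r^n}\otimes X$ with $\kos{\Si^{-1}X}{r^n}$. Granting this, every subsequent step is a formal consequence of exactness, colimit preservation, and the results of Sections~2--4.
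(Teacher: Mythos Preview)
Your proof is correct and follows essentially the same three-step d\'evissage as the paper: base case $\mcV(r)$, then $\mcV(\fa)$ via Lemma~\ref{le:composite}, then general $\mcV$ via Lemma~\ref{le:gamma_colim}; and the passage between the $\gam_\mcV$ and $L_\mcV$ statements via the localisation sequence is the same idea (the paper phrases it as a five-lemma argument, you phrase it via the uniqueness clause of Proposition~\ref{pr:loc-seq}). The one noteworthy difference is your choice of base case: you establish $\gam_{\mcV(r)}\cong\gam_{\mcV(r)}H_\one\otimes-$ via part~(2) of Lemma~\ref{le:gamma_r_colim} and the identification $\kos{\Si^{-1}\one}{r^n}\otimes X\cong\kos{\Si^{-1}X}{r^n}$, whereas the paper establishes $L_{\mcV(r)}\cong L_{\mcV(r)}H_\one\otimes-$ via part~(1). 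The paper's route is marginally cleaner here, since $L_{\mcV(r)}F=\colim_n\Si^{nd}F$ makes the tensor identity $L_{\mcV(r)}H_\one\otimes F\cong\colim_n(\Si^{nd}H_\one\otimes F)\cong\colim_n\Si^{nd}F$ immediate, with no need to track Koszul objects or the compatibility of the (non-canonical) connecting maps in the directed system of cones. Your approach works too, but you should be aware that the maps in Lemma~\ref{le:gamma_r_colim}(2) are only determined up to a choice, so the compatibility you assert deserves a word of justification.
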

\begin{proof}
  A simple calculation shows that one isomorphism implies the
  other. For instance, when $L_\mcV\cong L_\mcV H_\one\otimes-$ then
  $L_\mcV(\gam_\mcV H_\one\otimes-)=0$. This yields a morphism
  $\gam_\mcV H_\one\otimes-\to\gam_\mcV$ making the following diagram
  commutative.
\[\xymatrix{\cdots \ar[r]& \gam_\mcV H_\one\otimes-\ar[r]\ar[d]&\Id \ar[r]\ar@{=}[d]
&L_\mcV H_\one\otimes-\ar[r]\ar[d]&\cdots\\
\cdots\ar[r] &\gam_\mcV\ar[r]&\Id\ar[r]&L_\mcV\ar[r]&\cdots}\]
The five lemma then shows that this is an isomorphism.

It follows from the description of $L_{\mcV(r)}$ in
Lemma~\ref{le:gamma_r_colim} that the assertion holds for a closed set
$\mcV(r)$ given by some $r\in R$. Lemma~\ref{le:composite} then
implies the assertion for a closed set $\mcV(\fa)$ given by a finitely
generated ideal $\fa$ of $R$, and Lemma~\ref{le:gamma_colim} implies
the assertion for an arbitrary specialisation closed subset.
\end{proof}

\section{Stratification}

Let $R$ be a noetherian graded commutative ring and $\sfT$ be an
essentially small $R$-linear triangulated category.  In this section
we study the stratification of $\sfT$ and $\Coh\sfT$; this is the
analogue of stratification for compactly generated triangulated
categories introduced in \cite[\S4]{\bik:2011a} and inspired by
\cite[\S6]{ Hovey/Palmieri/Strickland:1997a}.

\subsection*{Stratification}
The triangulated category $\sfT$ is called \emph{minimal} if $\sfT$
admits no proper thick subcategory. This means if $\sfS\subseteq\sfT$
is a thick subcategory then $\sfS = 0$ or $\sfS = \sfT$. Analogously,
$\Coh\sfT$ is said to be minimal if $\Coh\sfT$ admits no proper
localising subcategory. Clearly, $\sfT$ is minimal when $\Coh\sfT$ is
minimal.

\begin{definition}
  We say that $\sfT$ is \emph{stratified} by the action of $R$ if
  $\gam_\fp\sfT$ is minimal for each $\fp\in\Spec R$. In the same
  vein, $\Coh\sfT$ is \emph{stratified} by the action of $R$ if
  $\Coh\gam_\fp\sfT$ is minimal for each $\fp\in\Spec R$.
\end{definition}
In each case stratification yields a classification of thick or
localising subcategories in terms of subsets of $\Spec R$; see
Corollary~\ref{co:classification}.

\begin{remark}
  Suppose that $\sfT$ is minimal. Then $\sfT$ is stratified by any
  $R$-action, and in particular, by the canonical action of
  $\bbZ$. Moreover, there is a unique prime $\fp\in\Spec R$ such that
  $\gam_\fp\sfT\neq 0$. Clearly, this implies $\gam_\fq\sfT=0$ for all
  $\fq\neq \fp$ in $\Spec R$.
\end{remark}

\subsection*{Consequences of stratification} 
It is convenient to set $\supp_R X=\supp_R H_X$ for each object
$X\in\sfT$. Observe that \eqref{eq:kos} implies
\[\supp_R X=\{\fp\in\Spec R\mid X(\fp)\neq 0\},\]
and this can be reformulated in terms of the following identity which is an
immediate consequence of Lemma~\ref{le:kos}
\begin{equation}\label{eq:stratification}
\Thick(X(\fp))=\Thick(X_\fp)\cap\gam_\fp\sfT=\gam_\fp\Thick(X).
\end{equation}

\begin{theorem}
\label{th:stratification}
Suppose that $\sfT$ is stratified by the action of $R$. Given objects
$X,Y$ in $\sfT$, we have
\begin{align*}
X\in\Thick(Y) \quad&\iff\quad \supp_R X\subseteq\supp_R Y,\\
\Hom^*_\sfT(X,Y)=0\quad&\iff\quad (\supp_R X)\cap(\supp_R
Y)=\varnothing.
\end{align*}
\end{theorem}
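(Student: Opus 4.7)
The plan is to reduce each equivalence to a pointwise statement at every $\fp \in \Spec R$: Theorem~\ref{th:lgp} lets me translate membership of a thick subcategory into a statement about the Koszul objects $X(\fp)$, while Theorem~\ref{th:Hom} does the same for the vanishing of $\Hom^*$. Once localised to the stratum $\gam_\fp\sfT$, the stratification hypothesis (minimality) does all the real work. The key bookkeeping tool is the identity \eqref{eq:stratification}, which relates the thick subcategory $\Thick(X_\fp)$ of $\sfT_\fp$ to the thick subcategory $\Thick(X(\fp))$ of $\gam_\fp\sfT$.

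For the first equivalence, I will start with the forward direction. If $X \in \Thick(Y)$, then $X_\fp \in \Thick(Y_\fp)$, so the Koszul object $X(\fp) = \kos{X_\fp}{\fp}$ lies in $\Thick(Y_\fp) \cap \gam_\fp\sfT$; by \eqref{eq:stratification} this intersection equals $\Thick(Y(\fp))$, so $X(\fp)\ne 0$ forces $Y(\fp) \ne 0$. Conversely, assuming $\supp_R X \subseteq \supp_R Y$, Theorem~\ref{th:lgp} applied to $\sfS = \Thick(Y)$ reduces the problem to showing $X(\fp) \in \Thick(Y_\fp)$ for every $\fp$. This is trivial when $\fp \notin \supp_R X$; otherwise $\fp \in \supp_R Y$, so $Y(\fp)\neq 0$, and minimality of $\gam_\fp\sfT$ gives $\Thick(Y(\fp)) = \gam_\fp\sfT \ni X(\fp)$, whence $X(\fp) \in \Thick(Y(\fp)) \subseteq \Thick(Y_\fp)$ by \eqref{eq:stratification}.

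For the second equivalence, the backward direction is immediate: if the supports are disjoint, then for each $\fp$ at least one of $X(\fp), Y(\fp)$ vanishes, so $\Hom^*_{\sfT_\fp}(X(\fp),Y(\fp)) = 0$ for every $\fp$ and Theorem~\ref{th:Hom} gives $\Hom^*_\sfT(X,Y) = 0$. The forward direction is the step I expect to require the most care and I will argue it by contradiction. Assuming $\Hom^*_\sfT(X,Y) = 0$ and a prime $\fp \in \supp_R X \cap \supp_R Y$, Theorem~\ref{th:Hom} gives $\Hom^*_{\sfT_\fp}(X(\fp), Y(\fp)) = 0$ while both $X(\fp)$ and $Y(\fp)$ are non-zero in $\gam_\fp\sfT$. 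The subcategory
\[
\sfU \;=\; \{Z \in \gam_\fp\sfT \mid \Hom^*_{\sfT_\fp}(X(\fp),Z) = 0\}
\]
is thick (closed under shifts, triangles via the long exact sequence of $\Hom$, and summands) and omits $X(\fp)$ since $\id_{X(\fp)} \neq 0$. Minimality of $\gam_\fp\sfT$ then forces $\sfU = 0$, contradicting $Y(\fp) \in \sfU$.

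The main obstacle is not really difficulty but bookkeeping: each implication hinges on keeping straight whether a given statement lives in $\sfT$, in the localisation $\sfT_\fp$, or in the torsion stratum $\gam_\fp\sfT$, and on invoking \eqref{eq:stratification} at the right moment to pass between $\Thick(X_\fp) \subseteq \sfT_\fp$ and $\Thick(X(\fp)) \subseteq \gam_\fp\sfT$. With that translation in hand, the substantive input is just the observation that in a minimal triangulated category every non-zero object generates everything.
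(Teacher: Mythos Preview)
Your proof is correct and follows essentially the same route as the paper: both arguments reduce the first equivalence to Theorem~\ref{th:lgp} with $\sfS=\Thick(Y)$ together with the identity \eqref{eq:stratification}, and the second to Theorem~\ref{th:Hom} together with minimality of $\gam_\fp\sfT$. Your explicit construction of the thick subcategory $\sfU$ simply spells out what the paper compresses into the sentence ``minimality of $\gam_\fp\sfT$ implies for objects $U,V$ in $\gam_\fp\sfT$ that $\Hom^*_{\sfT_\fp}(U,V)\neq 0$ iff $U\neq 0\neq V$''.
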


\begin{proof}
  For the first assertion set $\sfS=\Thick(Y)$. The local-global
  principle from Theorem~\ref{th:lgp} gives the first equivalence:
\begin{align*}
  X\in\Thick(Y) \quad&\iff\quad X(\fp)\in\gam_\fp\sfS\text{ for all  }\fp\in\Spec R\\
  &\iff\quad \supp_R X\subseteq\supp_R Y.
\end{align*}
The second equivalence uses the minimality of $\gam_\fp\sfT$ and the
identity \eqref{eq:stratification}.

For the second assertion recall from Theorem~\ref{th:Hom} that
\begin{align*}
  \Hom^*_\sfT(X,Y)=0\quad\iff\quad \Hom^*_{\sfT_\fp}(X(\fp),Y(\fp))=0
  \text{ for all }\fp\in\Spec R.
\end{align*}
The minimality of $\gam_\fp\sfT$ implies for objects $U,V$ in
$\gam_\fp\sfT$ that $\Hom^*_{\sfT_\fp}(U,V)\neq 0$ iff $U\neq 0\neq
V$. Thus $ \Hom^*_{\sfT_\fp}(X(\fp),Y(\fp))=0$ iff $\fp\not\in
(\supp_R X)\cap(\supp_R Y)$.
\end{proof}

Theorem~\ref{th:stratification} has a converse when endomorphism rings
are finitely generated.

\begin{proposition}
\label{pr:stratification-converse}  
Suppose that for each object $X$ in $\sfT$ the endomorphism ring
$\End^*_\sfT(X)$ is finitely generated over $R$. If $\sfT$ is not
stratified by $R$, then there are objects $X,Y\in\sfT$ such that
$\supp_R X=\supp_R Y$ but $\Thick(X)\neq\Thick(Y)$.
\end{proposition}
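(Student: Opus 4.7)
The plan is to extract from the failure of stratification a prime $\fp$ at which $\gam_\fp\sfT$ admits a proper nonzero thick subcategory, lift the resulting mismatch to $\sfT$ via the Verdier quotient $\sfT\to\sfT_\fp$ (which shares its class of objects with $\sfT$), and then manufacture two objects with identical supports but distinct thick subcategories by using the annihilator of an endomorphism ring to control supports of Koszul lifts.

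First I would choose $\fp\in\Spec R$ and a proper nonzero thick subcategory $\sfA\subsetneq\gam_\fp\sfT$, pick $Q\in\sfA$ nonzero and $P\in\gam_\fp\sfT\setminus\sfA$, so that $P\notin\Thick(Q)$ inside $\gam_\fp\sfT$. Viewing $P,Q$ as coming from objects $M,N\in\sfT$ with $M_\fp=P$ and $N_\fp=Q$, I set $Y=\kos{N}{\fp}$. Then $Y\in\sfT_{\mcV(\fp)}$ by Proposition~\ref{pr:loc-closed}, and by the finite generation hypothesis together with Proposition~\ref{pr:tria_support}, the annihilator $\fa=\Ann_R\End^*_\sfT(Y)$ is a finitely generated homogeneous ideal with $\mcV(\fa)=\supp_R Y$; because $Y_\fp=\kos{Q}{\fp}$ is nonzero by Nakayama, $\fp\in\mcV(\fa)$, so $\fa\subseteq\fp$.

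The key step is to set $\widetilde X=\kos{M}{\fa}$ and $X=Y\oplus\widetilde X$. By Lemma~\ref{le:kos}, $\widetilde X\in\Thick(M)\cap\sfT_{\mcV(\fa)}$, so $\supp_R\widetilde X\subseteq\mcV(\fa)=\supp_R Y$, and hence $\supp_R X=\supp_R Y$ for free; a further Nakayama argument at $\fp$ shows $\widetilde X_\fp\ne 0$, so $\widetilde X\ne 0$. To see $\Thick(X)\ne\Thick(Y)$, I would apply the local-global principle (Theorem~\ref{th:lgp}), which reduces the task to showing $\widetilde X(\fp)\notin\Thick(Y(\fp))$ inside $\gam_\fp\sfT$. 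Since $\fa\subseteq\fp$ and both $P$ and $Q$ are $\fp$-torsion, hence also $\fa$-torsion, in $\sfT_\fp$, two applications of Lemma~\ref{le:kos} identify $\Thick(\widetilde X(\fp))=\Thick(P)$ and $\Thick(Y(\fp))=\Thick(Q)$ inside $\gam_\fp\sfT$. An inclusion $\widetilde X(\fp)\in\Thick(Y(\fp))$ would force $P\in\Thick(Q)$, contradicting the choice of $P,Q$; thus $\widetilde X\notin\Thick(Y)$, and so $\Thick(X)\ne\Thick(Y)$.

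The main obstacle I expect is the support-matching step. Naive Koszul lifts like $\kos{M}{\fp^n}$ and $\kos{N}{\fp^n}$ in $\sfT_{\mcV(\fp)}$ can have incomparable supports, and one cannot freely adjoin summands to equalize supports without also enlarging the relevant thick subcategories. The crucial trick is to replace $\fp$ in the Koszul construction of $\widetilde X$ by the support-defining ideal $\fa$ of $Y$ itself: this puts $\widetilde X$ into $\sfT_{\mcV(\fa)}=\sfT_{\supp Y}$ automatically, while the containment $\fa\subseteq\fp$ together with the $\fp$-torsionness of $P$ ensures that the local picture at $\fp$ is undisturbed, i.e.\ $\Thick(\widetilde X(\fp))$ remains $\Thick(P)$.
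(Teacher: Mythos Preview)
Your argument is correct, but it is considerably more elaborate than necessary, and the complication stems from a worry that does not actually arise. In your final paragraph you anticipate that the naive Koszul lifts $\kos{M}{\fp}$ and $\kos{N}{\fp}$ may have incomparable supports, and this is what prompts the annihilator ideal $\fa$, the asymmetric construction $\widetilde X=\kos{M}{\fa}$, the direct-sum trick $X=Y\oplus\widetilde X$, and the appeal to the local-global principle. But that worry is unfounded precisely because of the finite generation hypothesis you are given: for any nonzero $P\in\gam_\fp\sfT$ with preimage $M\in\sfT$, the object $X'=\kos{M}{\fp}$ lies in $\sfT_{\mcV(\fp)}$ and satisfies $X'_\fp\ne 0$, so $\End^*_\sfT(X')$ is a finitely generated $R$-module whose support is contained in $\mcV(\fp)$ and contains $\fp$, hence equals $\mcV(\fp)$; by Proposition~\ref{pr:tria_support} this gives $\supp_R X'=\mcV(\fp)$.

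The paper's proof exploits exactly this. It takes nonzero $X_\fp,Y_\fp\in\gam_\fp\sfT$ with $\Thick(X_\fp)\neq\Thick(Y_\fp)$, sets $X'=\kos{X}{\fp}$ and $Y'=\kos{Y}{\fp}$, observes that $\supp_R X'=\mcV(\fp)=\supp_R Y'$ by the remark above, and then uses Lemma~\ref{le:kos} (as you do) to get $\Thick(X'_\fp)=\Thick(X_\fp)\neq\Thick(Y_\fp)=\Thick(Y'_\fp)$; since localisation at $\fp$ is exact, this forces $\Thick(X')\neq\Thick(Y')$ without invoking Theorem~\ref{th:lgp}. Your route works, but the symmetric construction with both Koszul objects taken at $\fp$ gives equal supports for free and shortens the argument to a few lines.
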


\begin{proof}
  Assume $\gam_\fp\sfT$ is not minimal. Thus there are non-zero objects
  $X_\fp,Y_\fp$ in $\gam_\fp\sfT$ such that
  $\Thick(X_\fp)\neq\Thick(Y_\fp)$. In $\sfT$ consider the objects
  $X'=\kos X\fp$ and $Y'=\kos Y\fp$.  Then $\supp_R
  X'=\mcV(\fp)=\supp_R Y'$ by Proposition~\ref{pr:tria_support}. On
  the other hand, Lemma~\ref{le:kos} gives the equalities below:
\[
\Thick(X'_\fp)=\Thick(X_\fp)\neq\Thick(Y_\fp)=\Thick(Y'_\fp),
\]
so that $\Thick(X')\neq\Thick(Y')$.
\end{proof}

The Hom vanishing statement in Theorem~\ref{th:stratification} can be
strengthened when morphism spaces are finitely generated over $R$.
For an $R$-module $M$, we write
\[
\Supp_R M=\{\fp\in\Spec R\mid M_\fp\neq 0\}.
\] 
The following theorem can be used to explain results on the symmetry
of Hom vanishing, as studied in \cite{Avramov/Buchweitz:2000a,
  Bergh/Oppermann:2011a}.

\begin{theorem}
Let $X$ and $Y$ be objects in $\sfT$.
\begin{enumerate}
\item If $\Hom_\sfT^*(X,Y)$ is finitely generated over $R$, then
\[
\Supp_R  \Hom_\sfT^*(X,Y)\subseteq (\supp_R X)\cap(\supp_R Y).
\]
\item If  $\sfT$ is stratified by the action of $R$, then
\[
\Supp_R  \Hom_\sfT^*(X,Y)\supseteq (\supp_R X)\cap(\supp_R Y).
\]
\end{enumerate}
\end{theorem}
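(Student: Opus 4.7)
The plan is to reduce both inclusions to statements about $\Hom^{*}_{\sfT_{\fp}}(X(\fp),Y(\fp))$, using the Koszul machinery and the cited lemma from the proof of Theorem~\ref{th:Hom}: when $\Hom^{*}_{\sfT}(U,V)$ is $\mcV(\fa)$-torsion, one has $\Hom^{*}_{\sfT}(U,V)=0$ iff $\Hom^{*}_{\sfT}(\kos{U}{\fa},V)=0$ (and, by the symmetric argument based on \eqref{eq:koszul-object} applied on the second argument, iff $\Hom^{*}_{\sfT}(U,\kos{V}{\fa})=0$).

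For (1), fix $\fp\in\Supp_{R}\Hom^{*}_{\sfT}(X,Y)$, so that $N:=\Hom^{*}_{\sfT_{\fp}}(X_{\fp},Y_{\fp})$ is a non-zero finitely generated $R_{\fp}$-module. First I would verify that $\Hom^{*}_{\sfT_{\fp}}(X(\fp),Y_{\fp})\neq 0$ by an iterated Nakayama argument along the Koszul tower defining $X(\fp)$: picking generators $r_{1},\dots,r_{n}$ of $\fp$ and setting $X_{0}=X_{\fp}$ and $X_{i}=\kos{X_{i-1}}{r_{i}}$, the defining triangles yield short exact sequences relating $\Hom^{*}(X_{i},Y_{\fp})$ to the cokernel and kernel of multiplication by $r_{i}$ on $\Hom^{*}(X_{i-1},Y_{\fp})$. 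Since $r_{i}\in\fp R_{\fp}$ cannot act surjectively on a non-zero finitely generated $R_{\fp}$-module by Nakayama, and since $R$ is noetherian so each intermediate $\Hom^{*}$ remains finitely generated over $R_{\fp}$, induction gives $\Hom^{*}(X(\fp),Y_{\fp})\neq 0$; in particular $X(\fp)\neq 0$, so $\fp\in\supp_{R}X$. Because $X(\fp)\in\gam_{\fp}\sfT$, the module $\Hom^{*}_{\sfT_{\fp}}(X(\fp),Y_{\fp})$ is $\mcV(\fp)$-torsion, and the symmetric version of the cited lemma (applied on the $Y$-side) then forces $\Hom^{*}_{\sfT_{\fp}}(X(\fp),Y(\fp))\neq 0$, hence $Y(\fp)\neq 0$ and $\fp\in\supp_{R}Y$.

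For (2), fix $\fp\in(\supp_{R}X)\cap(\supp_{R}Y)$, so that both $X(\fp)$ and $Y(\fp)$ are non-zero objects of $\gam_{\fp}\sfT$. The stratification hypothesis makes $\gam_{\fp}\sfT$ minimal, so, exactly as in the second half of the proof of Theorem~\ref{th:stratification}, one obtains $\Hom^{*}_{\sfT_{\fp}}(X(\fp),Y(\fp))\neq 0$. To convert this into non-vanishing of $\Hom^{*}_{\sfT}(X,Y)_{\fp}$ I argue the contrapositive: if $\Hom^{*}_{\sfT_{\fp}}(X_{\fp},Y_{\fp})=0$, then the triangles in the Koszul tower on $X_{\fp}$ propagate this vanishing to $\Hom^{*}(X(\fp),Y_{\fp})=0$; since the latter is trivially $\mcV(\fp)$-torsion, one more invocation of the cited lemma yields $\Hom^{*}(X(\fp),Y(\fp))=0$, contradicting minimality.

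The main obstacle is the Nakayama step in (1). Propagating vanishing through cones is automatic, but keeping non-vanishing alive requires finite generation of $\Hom^{*}_{\sfT}(X,Y)_{\fp}$ at every stage of the Koszul construction: without it, some $r_{i}\in\fp R_{\fp}$ could act invertibly on an infinitely generated $R_{\fp}$-module and collapse $\Hom^{*}(X(\fp),Y_{\fp})$ to zero. This asymmetry mirrors the gap already observed in Proposition~\ref{pr:tria_support} between the general inclusion and the equality obtained under finite generation.
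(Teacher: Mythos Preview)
Your proposal is correct and follows essentially the same route as the paper: both parts reduce to the (non)vanishing of $\Hom^{*}_{\sfT_{\fp}}(X(\fp),Y(\fp))$, with Nakayama driving~(1) and minimality of $\gam_{\fp}\sfT$ driving~(2). The paper is simply terser, packaging your iterated Nakayama step as a single citation to \cite[Lemma~5.11]{Benson/Iyengar/Krause:2008a}, and in~(2) going directly from $\Hom^{*}_{\sfT_{\fp}}(X(\fp),Y(\fp))\neq 0$ to $\Hom^{*}_{\sfT_{\fp}}(X_{\fp},Y_{\fp})\neq 0$ without spelling out the cone propagation.
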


\begin{proof}
  (1) Let $\fp\in \Supp_R \Hom_\sfT^*(X,Y)$. Thus
  $\Hom^*_{\sfT_\fp}(X_\fp,Y_\fp)\neq 0$. The assumption implies that
  this is finitely generated, and an application of Nakayama's lemma
  gives
\[
\Hom^*_{\sfT_\fp}(X(\fp),Y(\fp))\neq 0;
\] 
use \eqref{eq:koszul-object} or see \cite[Lemma~5.11]{\bik:2008a} for
a proof. Thus $\fp \in (\supp_R X)\cap(\supp_R Y)$.

(2) Let $\fp \in (\supp_R X)\cap(\supp_R Y)$. Then stratification
implies \[\Hom^*_{\sfT_\fp}(X(\fp),Y(\fp))\neq 0,\] and therefore
\[
\Hom^*_\sfT(X,Y)_\fp\cong \Hom^*_{\sfT_\fp}(X_\fp,Y_\fp)\neq 0.
\]
Thus $\fp\in \Supp_R  \Hom_\sfT^*(X,Y)$.
\end{proof}

\subsection*{Perfect complexes}

Let $A$ be a noetherian commutative ring. We denote by $\sfD(A)$ the
derived category of the category of $A$-modules. An object in
$\sfD(A)$ is called \emph{perfect} if it is isomorphic to a bounded
complex of finitely generated projective $A$-modules; these form a
thick subcategory denoted by $\sfD^\per(A)$. For $X\in\sfD(A)$ set
\[H_X=\Hom_A(-,X)|_{\sfD^\per(A)}.\]

The ring $R=A$ acts canonically on $\sfT=\sfD^\per(A)$ and we show
that $\Coh\sfT$ is stratified by this action. The residue fields play
a special role. For $\fp\in\Spec R$ let $k(\fp)=A_\fp/\fp_\fp$, viewed
as a complex concentrated in degree zero.

\begin{lemma}\label{le:perfect}
Let  $\fp\in\Spec R$. Then $\Coh\gam_\fp\sfT=\Loc(H_{k(\fp)})$.
\end {lemma}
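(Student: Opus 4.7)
The plan is to reduce the identity to a computation involving the Koszul complex $K=\kos{A_\fp}{\fp}$, viewed as an object of $\sfT_\fp=\sfD^\per(A_\fp)$ (choose, once and for all, a generating sequence for $\fp$). First I would identify $\Coh\gam_\fp\sfT$ with $\Loc(H_K)$. By Proposition~\ref{pr:loc-closed} applied inside $\sfT_\fp$, the thick subcategory $\gam_\fp\sfT=(\sfT_\fp)_{\mcV(\fp)}$ is generated by the Koszul objects $\kos{X}{\fp}$ as $X$ ranges over $\sfT_\fp$. Since $\sfT_\fp=\Thick(A_\fp)$ and the assignment $X\mapsto\kos{X}{\fp}$ sends exact triangles to exact triangles (iterated octahedra), this collection reduces to the single object $K$, so $\gam_\fp\sfT=\Thick(K)$. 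Lemma~\ref{le:Loc} then yields $\Coh\gam_\fp\sfT=\Coh\Thick(K)=\Loc(H_K)$.

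Next I would check the easy inclusion $\Loc(H_{k(\fp)})\subseteq\Coh\gam_\fp\sfT$ by showing $H_{k(\fp)}\in\Coh\gam_\fp\sfT$. This is immediate from the definitions: since $\fp$ annihilates $k(\fp)$ and elements of $A\setminus\fp$ act invertibly on it, the graded abelian group $\Hom^*_A(Y,k(\fp))$ is $\fp$-local and $\mcV(\fp)$-torsion for every perfect $Y$, hence $H_{k(\fp)}$ lies in $(\Coh\sfT)_\fp\cap(\Coh\sfT)_{\mcV(\fp)}=\Coh\gam_\fp\sfT$.

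The main obstacle is the reverse inclusion, for which it suffices to prove $H_K\in\Loc(H_{k(\fp)})$. I would introduce the class
\[
\sfC=\{X\in\sfD(A_\fp)\mid H_X\in\Loc(H_{k(\fp)})\},
\]
where $H_X=\Hom_A(-,X)|_{\sfT}$ is defined for any $X\in\sfD(A)$. Because $\Loc(H_{k(\fp)})$ is closed under suspensions, subobjects, quotients, and extensions in $\Coh\sfT$, and because each exact triangle $X\to Y\to Z$ in $\sfD(A_\fp)$ induces a long exact sequence of cohomological functors presenting $H_Y$ as an extension of a subobject of $H_Z$ by a quotient of $H_X$, the class $\sfC$ is a thick subcategory of $\sfD(A_\fp)$ containing $k(\fp)$. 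Finally, $K$ is a bounded complex whose cohomology modules $H^i(K)$ are all annihilated by $\fp A_\fp$, hence are finite-dimensional $k(\fp)$-vector spaces. The standard $t$-structure on $\sfD^b(A_\fp)$ then presents $K$ as an iterated extension of shifts of these cohomology modules, each of which lies in $\Thick(k(\fp))$. Therefore $K\in\Thick(k(\fp))\subseteq\sfC$, so $H_K\in\Loc(H_{k(\fp)})$, completing the proof.
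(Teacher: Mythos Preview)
Your proof is correct and follows essentially the same route as the paper: both arguments verify the easy inclusion by observing that $H_{k(\fp)}$ is $\fp$-local and $\mcV(\fp)$-torsion, and for the reverse inclusion both reduce to showing that the relevant objects of $\gam_\fp\sfT$ lie in $\Thick(k(\fp))\subseteq\sfD(A_\fp)$ via a d\'evissage on cohomology. The only organisational difference is that you first reduce $\Coh\gam_\fp\sfT$ to $\Loc(H_K)$ for the single Koszul object $K=\kos{A_\fp}{\fp}$ and then place $K$ in $\Thick(k(\fp))$, whereas the paper works directly with an arbitrary $X\in\gam_\fp\sfT$, noting that any such perfect complex has finite-length cohomology; your reduction step is valid (indeed Lemma~\ref{le:kos} applied with $X=A_\fp$ gives $\Thick(K)=\gam_\fp\sfT$ immediately, without the octahedral argument) but not strictly necessary.
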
 

\begin{proof}
  Since $H_{k(\fp)}$ is $\fp$-local and $\mcV(\fp)$-torsion when
  evaluated at any object from $\sfT$, it belongs to
  $\Coh\gam_\fp\sfT$; this justfies one inclusion. For the other one,
  it is convenient to identify $\sfT_\fp$ and $\sfD^\per(A_\fp)$. Thus
  an object $X$ in $\gam_\fp\sfT$ is a perfect complex over $A_\fp$
  such that its cohomology is of finite length over $A_\fp$. It
  follows that
 \[
  X\in\Thick(k(\fp))\subseteq\sfD(A_\fp).
 \] 
 This is easily shown by an induction on the number of integers $n$
 such $H^n(X)\neq 0$; see for example
 \cite[Example~3.5]{Dwyer/Greenlees/Iyengar:2006}. Thus $H_X\in
 \Loc(H_{k(\fp)})$.
\end{proof}

\begin{theorem}\label{th:ring-stratification}
Let $A$ be a commutative noetherian ring. Then $\Coh\sfD^\per(A)$ is
stratified by the canonical action of $A$.
\end{theorem}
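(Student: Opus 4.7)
The plan is to combine Lemma~\ref{le:perfect} with the classical Hopkins--Neeman classification of thick subcategories. By that lemma, $\Coh\gam_\fp\sfT=\Loc(H_{k(\fp)})$ for each $\fp\in\Spec A$, so the theorem amounts to showing that each such localising subcategory of $\Coh\sfT$ admits no proper non-zero localising subcategory.

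For this I would first invoke Hopkins~\cite{Hopkins:1987a} and Neeman~\cite{Neeman:1992b}: the thick subcategories of $\sfD^\per(A_\fp)$ correspond bijectively to the specialisation-closed subsets of $\Spec A_\fp$. The subcategory $\gam_\fp\sfT$ corresponds to the singleton $\{\fp A_\fp\}$ and is therefore minimal as a triangulated category. In particular, for every non-zero $X\in\gam_\fp\sfT$ one has $\Thick(X)=\gam_\fp\sfT$, and the argument of Lemma~\ref{le:perfect} gives $k(\fp)\in\Thick(X)$ in $\sfD(A_\fp)$. Applying $H_{(-)}$ to the exact triangles realising $k(\fp)$ as an iterated cone built from $X$ (and using that these triangles induce exact sequences in $\Coh\sfT$, cf.\ Lemma~\ref{le:localising}) then yields $H_{k(\fp)}\in\Loc(H_X)$, so $\Loc(H_X)=\Loc(H_{k(\fp)})$.

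Now take a non-zero $F\in\Loc(H_{k(\fp)})$. Lemma~\ref{le:flat} together with the identification $\Loc(H_{k(\fp)})=\Coh\gam_\fp\sfT$ allows us to write $F$ as a filtered colimit of representables $H_{X_i}$ with $X_i\in\gam_\fp\sfT$. Non-vanishing of $F$ supplies a non-zero morphism $H_X\to F$ with $X\in\gam_\fp\sfT$ non-zero, and its image is a non-zero subobject of $F$ lying simultaneously in $\Loc(F)$ and in $\Loc(H_X)=\Loc(H_{k(\fp)})$. The main obstacle is the final step, promoting $\Loc(F)\cap\Loc(H_{k(\fp)})\neq 0$ to the desired equality $\Loc(F)=\Loc(H_{k(\fp)})$, i.e.\ extracting $H_{k(\fp)}$ itself from $F$. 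I would close this either by iterating the construction, using that any quotient of $H_X$ corresponds to strictly simpler data in the tower $0=X_0\to\cdots\to X_n=X$ witnessing $X\in\Thick(k(\fp))$ and reducing eventually to the case $X=k(\fp)$, or by exploiting the tensor structure of Proposition~\ref{pr:tensor}: writing $F\cong\gam_\fp H_A\otimes F$ and using the Koszul description $\Loc(\gam_\fp H_A)=\Loc(H_{A(\fp)})$ with $A(\fp)\in\gam_\fp\sfT$ non-zero, one reduces the problem to a transparent calculation over $k(\fp)$.
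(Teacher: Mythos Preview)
Your argument has a genuine error at the claim $k(\fp)\in\Thick(X)$ in $\sfD(A_\fp)$ for non-zero $X\in\gam_\fp\sfT$. This fails whenever $A_\fp$ is not regular: since $X$ is perfect, the thick subcategory $\Thick(X)$ (computed in $\sfD(A_\fp)$ or in $\sfD^\per(A_\fp)$, it makes no difference) is contained in $\sfD^\per(A_\fp)$, whereas $k(\fp)$ has infinite projective dimension over a non-regular local ring and so lies outside $\sfD^\per(A_\fp)$. Lemma~\ref{le:perfect} only gives the opposite containment $X\in\Thick(k(\fp))$, which yields $H_X\in\Loc(H_{k(\fp)})$ but not the inclusion $H_{k(\fp)}\in\Loc(H_X)$ that you need. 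Thus the equality $\Loc(H_X)=\Loc(H_{k(\fp)})$ for representables is not established, and this is precisely the heart of minimality; Hopkins--Neeman for $\sfD^\per(A_\fp)$ does not supply it. Your proposed iteration does not obviously close the gap either, since the image of $H_X\to F$ need not be representable and there is no evident well-founded descent.

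Your second fallback, the tensor route, is exactly how the paper proceeds, and it treats an arbitrary non-zero $F\in\Coh\gam_\fp\sfT$ directly. Since $A$ generates $\sfT$, one has $\Loc(\gam_\fp H_A)=\Coh\gam_\fp\sfT=\Loc(H_{k(\fp)})$, and Proposition~\ref{pr:tensor} gives $\Loc(F)=\Loc(F\otimes\gam_\fp H_A)=\Loc(F\otimes H_{k(\fp)})$. The projection formula of Lemma~\ref{le:projection} for $f=-\otimes_A k(\fp)\colon\sfD^\per(A)\to\sfD^\per(k(\fp))$ then identifies $F\otimes H_{k(\fp)}$ with $f_*f^*(F)$, which is a non-zero direct sum of suspensions of $H_{k(\fp)}$ because every object of $\sfD^\per(k(\fp))$ splits. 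Note finally that the paper recovers the Hopkins--Neeman classification as a corollary of this theorem, so taking it as input, as you do, would make the argument circular in the paper's own context.
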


\begin{proof}
  Fix $\fp\in\Spec A$.  We need to show that $\Coh\gam_\fp\sfT$ is
  minimal, which is equivalent to $\Loc(F)=\Loc(H_{k(\fp)})$ for each
  non-zero $F\in \Coh\gam_\fp\sfT$, by Lemma~\ref{le:perfect}. This is
  clear for $F=\gam_\fp H_A$.  This gives the second of the following
  equalities:
\[
\Loc(F)=\Loc(F\otimes \gam_\fp H_A)=\Loc(F\otimes H_{k(\fp)}).
\] 
The first one is by Proposition~\ref{pr:tensor}.  Let $f$ denote the functor 
\[
-\otimes _A k(\fp)\colon \sfD^\per(A)\lto \sfD^\per(k(\fp)).
\] 
Then $F\otimes H_{k(\fp)}\cong f_*f^*(F)$ by
Lemma~\ref{le:projection}.  Thus $F\otimes H_{k(\fp)}$ is a direct sum
of suspensions of $H_{k(\fp)}$, since every object in
$\sfD^\per(k(\fp))$ is a direct sum of suspensions of $k(\fp)$.  It
follows that $\Loc(F)=\Loc(H_{k(\fp)})$.
\end{proof}

The following result is due to Hopkins \cite{Hopkins:1987a} and Neeman
\cite{Neeman:1992b}.

\begin{corollary}
Let $\sfS\subseteq \sfD^\per(A)$ be a thick subcategory. Then
\[\sfS=\{X\in \sfD^\per(A)\mid \supp_A X\subseteq\mcV\}\] for some 
specialisation closed subset $\mcV\subseteq\Spec A$.
\end{corollary}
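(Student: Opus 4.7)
The plan is to set $\mcV = \bigcup_{X\in\sfS} \supp_A X$ and verify that this yields the desired description $\sfS = \{X \in \sfD^\per(A) \mid \supp_A X \subseteq \mcV\}$.

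First I would check that $\mcV$ is specialisation closed. For each perfect complex $X$, the graded endomorphism ring $\End^*_A(X)$ is finitely generated over $A$, so Proposition~\ref{pr:tria_support} yields
\[
\supp_A X \;=\; \Supp_A \End^*_A(X) \;=\; \mcV(\Ann_A \End^*_A(X)),
\]
a closed subset of $\Spec A$. A union of closed subsets is in particular specialisation closed, so $\mcV$ has the right shape.

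The inclusion $\sfS \subseteq \{X \mid \supp_A X \subseteq \mcV\}$ is tautological from the definition of $\mcV$. For the reverse inclusion, fix $X \in \sfD^\per(A)$ with $\supp_A X \subseteq \mcV$. By the local-global principle (Theorem~\ref{th:lgp}), it suffices to verify $X(\fp) \in \Thick(\sfS_\fp)$ for every $\fp \in \Spec A$. If $\fp \notin \mcV$ then $\fp \notin \supp_A X$, so $X(\fp) = 0$ and the condition is trivial. If $\fp \in \mcV$, pick $Y \in \sfS$ with $\fp \in \supp_A Y$, so that $Y(\fp) \neq 0$. By construction both $X(\fp)$ and $Y(\fp)$ are $\fp$-local and $\mcV(\fp)$-torsion, hence lie in $\gam_\fp\sfT$, and $Y(\fp) = \kos{Y_\fp}{\fp} \in \Thick(Y_\fp) \subseteq \Thick(\sfS_\fp)$.

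To finish I invoke stratification: Theorem~\ref{th:ring-stratification} asserts that $\Coh\gam_\fp\sfT$ is minimal, which as recorded in the paragraph preceding the definition of stratification immediately implies that the triangulated category $\gam_\fp\sfT$ is itself minimal. Hence the nonzero object $Y(\fp) \in \gam_\fp\sfT$ generates all of $\gam_\fp\sfT$ as a thick subcategory, and therefore $X(\fp) \in \gam_\fp\sfT = \Thick(Y(\fp)) \subseteq \Thick(\sfS_\fp)$, closing the argument. I anticipate no serious obstacle here: the $\Coh$-level stratification of $\sfD^\per(A)$ and the local-global principle do the heavy lifting, and the only small check is that supports of perfect complexes are closed, which is just Proposition~\ref{pr:tria_support} combined with standard commutative algebra.
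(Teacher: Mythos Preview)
Your argument is correct. The paper's own proof is a one-line appeal to Theorem~\ref{th:stratification} together with Theorem~\ref{th:ring-stratification} and Proposition~\ref{pr:tria_support}, so the underlying machinery is the same as yours: the local-global principle plus minimality of each $\gam_\fp\sfT$. The difference is in packaging. The paper first records the single-object statement $X\in\Thick(Y)\Leftrightarrow\supp_R X\subseteq\supp_R Y$ (Theorem~\ref{th:stratification}) and then cites it; to get from there to an arbitrary thick subcategory $\sfS$ one implicitly uses that $\supp_A X$ is closed and $A$ is noetherian to find finitely many $Y_1,\dots,Y_n\in\sfS$ whose supports cover $\supp_A X$, reducing to $Y=Y_1\oplus\cdots\oplus Y_n$. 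Your route bypasses this reduction: you apply Theorem~\ref{th:lgp} directly to $\sfS$ and handle one prime at a time, which is slightly cleaner and is in fact exactly how the proof of Theorem~\ref{th:stratification} itself proceeds internally. Either way the essential inputs are identical.
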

\begin{proof}
  The assertion follows from Theorems~\ref{th:stratification} and
  \ref{th:ring-stratification}, using the fact that $\supp_A X$ is
  specialisation closed for each $X\in\sfD^\per(A)$ by
  Proposition~\ref{pr:tria_support}.
\end{proof}

\begin{remark}
  Theorem~\ref{th:ring-stratification} generalises with same proof in
  two directions as follows.

  (1) Let $A$ be a commutative differential graded algebra such that
  the ring $H^*(A)$ is noetherian. If $A$ is formal, then
  $\Coh\sfD^\per(A)$ is stratified by the canonical
  $H^*(A)$-action. This is an analogue of Theorem~8.1 in
  \cite{Benson/Iyengar/Krause:2011a} that asserts that $\sfD(A)$ is
  stratified by $H^{*}(A)$.

  (2) Let $A$ be a graded commutative noetherian ring. More precisely, we fix
  an abelian grading group $G$ endowed with a symmetric bilinear form
  \[(-,-)\colon G\times G\lto\bbZ/2,\] and $A$ admits a
  decomposition \[A=\bigoplus_{g\in G}A_g\] such that the
  multiplication satisfies $A_g A_h\subseteq A_{g+h}$ for all $g,h\in
  G$ and $xy=(-1)^{(g,h)}yx$ for all homogeneous elements $x\in A_g$,
  $y\in A_h$. We consider $G$-graded $A$-modules with degree zero
  morphisms and let $\sfD(A)$ denote its derived category. Localising
  subcategories of $\Coh\sfD^\per(A)$ and $\sfD(A)$ are supposed to be
  closed under twists, where the \emph{twist} of a module or complex
  $X$ by $g\in G$ is given by $X(g)_h= X_{g+h}$.  Suitably adapting
  definitions and constructions to take into account twists, one can
  establish that $\Coh\sfD^\per(A)$ is stratified by the canonical
  $A$-action. This is an analogue of Corollary~5.7 in \cite{DS:2013a}
  that asserts that $\sfD(A)$ is stratified by $A$.
\end{remark}

\section{Compactly generated triangulated categories}

Let $R$ be a noetherian graded commutative ring and $\sfT$ be a
compactly generated $R$-linear triangulated category. The subcategory
of compacts, $\sfT^{\sfc}$, is an essentially small triangulated
category and has an induced $R$-action. In
\cite{Benson/Iyengar/Krause:2008a} we developed a theory of local
cohomology and support for $\sfT$. In this section, we use the
\emph{restricted Yoneda functor}
\[
\sfT\lto\Coh\sfT^{\sfc},\quad X\mapsto H_X=\Hom_{\sfT}(-,X)|_{\sfT^{\sfc}},
\]
to compare it with the one for $\Coh \sfT^{\sfc}$ introduced in this article.

\begin{remark}
  The morphisms annihilated by the functor $\sfT\to\Coh\sfT^{\sfc}$
  are called \emph{phantom maps}.  In the context of the stable module
  category $\sfT=\StMod kG$ of a finite group $G$, these were studied
  by Benson and Gnacadja. In particular, in
  \cite[\S4]{Benson/Gnacadja:2001a} it is shown that there are
  filtered systems in $\sfT^{\sfc}=\stmod kG$ that do not lift to
  $\mod kG$. As a consequence, there are objects in $\Coh\sfT^{\sfc}$
  that are not in the image of $\sfT\to\Coh\sfT^{\sfc}$, namely the
  filtered colimit of the corresponding representable functors.  In
  the context of the derived category of a commutative noetherian
  ring, examples of filtered systems that do not lift can be found in
  Neeman \cite{Neeman:1997a}.
\end{remark}

\subsection*{Cohomological localisation}

Given a specialisation closed subset $\mcV$ of $\Spec R$, there is an
exact localisation functor $\bar L_\mcV\colon\sfT\to\sfT$ such
that $\bar L_\mcV X=0$ iff $H_X$ is $\mcV$-torsion; see
\cite[\S4]{\bik:2008a}. The corresponding colocalisation functor is
denoted by $\bar\gam_\mcV$. Thus each $X\in\sfT$ fits into an
exact \emph{localisation triangle}
\[\bar\gam_\mcV X\lto X\lto\bar L_\mcV X\lto.\]

The following proposition says that notions developed in
\cite{\bik:2008a} for compactly generated triangulated categories are
determined by analogous concepts for the category of cohomological
functors which are  discussed in this work. This applies, for
instance, to the notion of support.

\begin{proposition}
Let $\mcV\subseteq\Spec R$ be specialisation closed and $X\in\sfT$. Then 
\[
H_{\bar \gam_\mcV X}\cong \gam_\mcV H_X\qquad\text{and}\qquad H_{\bar L_\mcV X}\cong L_\mcV H_X.
\]
\end{proposition}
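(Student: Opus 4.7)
The plan is to apply the restricted Yoneda functor $H\colon\sfT\to\Coh\sfT^{\sfc}$ to the localisation triangle in $\sfT$ and then use the uniqueness part of Proposition~\ref{pr:loc-seq} to identify the resulting long exact sequence with the canonical localisation sequence in $\Coh\sfT^{\sfc}$ attached to the thick subcategory $\sfT^{\sfc}_\mcV\subseteq \sfT^{\sfc}$. Only one isomorphism needs to be proved, since the other follows by the same argument (or by exactness of the two sequences involved).

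First I would apply $H$ to the localisation triangle $\bar\gam_\mcV X\to X\to \bar L_\mcV X\to$. Since $\Hom^{*}_{\sfT}(C,-)$ is cohomological for every compact $C$, evaluation at each $C\in\sfT^{\sfc}$ yields a long exact sequence of abelian groups, and hence, by the paper's definition of exactness in $\Coh\sfT^{\sfc}$, the sequence
\[
\cdots\lto \Si^{-1}H_{\bar L_\mcV X}\lto H_{\bar\gam_\mcV X}\lto H_X\lto H_{\bar L_\mcV X}\lto \Si H_{\bar\gam_\mcV X}\lto\cdots
\]
is exact in $\Coh\sfT^{\sfc}$.

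Next I would verify that the two endpoints lie in the expected subcategories. For $H_{\bar\gam_\mcV X}$: by the construction in \cite{\bik:2008a}, $\bar\gam_\mcV X$ is $\mcV$-torsion in $\sfT$, so $\Hom^{*}_\sfT(C,\bar\gam_\mcV X)$ is a $\mcV$-torsion $R$-module for every $C\in\sfT^{\sfc}$; thus $H_{\bar\gam_\mcV X}$ is $\mcV$-torsion as a functor, and Corollary~\ref{co:coh-tor-is-tor-coh} identifies $(\Coh\sfT^{\sfc})_\mcV=\Coh(\sfT^{\sfc}_\mcV)$. For $H_{\bar L_\mcV X}$: any $C\in\sfT^{\sfc}_\mcV$ has $\End^{*}_{\sfT^{\sfc}}(C)_\fp=0$ for $\fp\notin\mcV$, hence (since $\Hom^{*}_{\sfT}(D,C)$ is an $\End^{*}(C)$-module for every $D$) it is also $\mcV$-torsion in the sense of \cite{\bik:2008a}, so it lies in the BIK subcategory $\sfT_\mcV\subseteq\sfT$. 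The right-orthogonality property of $\bar L_\mcV X$ then gives $\Hom^{*}_\sfT(C,\bar L_\mcV X)=0$ for all $C\in\sfT^{\sfc}_\mcV$, which is the condition $H_{\bar L_\mcV X}\in\Coh(\sfT^{\sfc}/\sfT^{\sfc}_\mcV)$ appearing in Proposition~\ref{pr:loc-seq}.

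Finally, the uniqueness statement in Proposition~\ref{pr:loc-seq}, applied to $F=H_X$ with the thick subcategory $\sfT^{\sfc}_\mcV$, forces the exact sequence above to be isomorphic to the canonical localisation sequence $\cdots\to \gam_\mcV H_X\to H_X\to L_\mcV H_X\to\cdots$, yielding simultaneously $H_{\bar\gam_\mcV X}\cong \gam_\mcV H_X$ and $H_{\bar L_\mcV X}\cong L_\mcV H_X$. The main obstacle is the bookkeeping step checking that BIK's notion of $\mcV$-torsion, when restricted to $\sfT^{\sfc}$, coincides with the present paper's notion; once that identification is made, the BIK orthogonality of $\bar L_\mcV X$ against the whole of $\sfT_\mcV$ immediately gives the vanishing on the small category $\sfT^{\sfc}_\mcV$, and the rest is a formal appeal to the uniqueness of localisation sequences.
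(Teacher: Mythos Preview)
Your proposal is correct and follows exactly the paper's own approach: apply the restricted Yoneda functor to the localisation triangle, and invoke the uniqueness clause of Proposition~\ref{pr:loc-seq} for the inclusion $\sfT^{\sfc}_\mcV\subseteq\sfT^{\sfc}$. The paper's proof is a single sentence to this effect; you have simply spelled out the verifications (that $H_{\bar\gam_\mcV X}\in\Coh(\sfT^{\sfc}_\mcV)$ via Corollary~\ref{co:coh-tor-is-tor-coh}, and that $H_{\bar L_\mcV X}$ vanishes on $\sfT^{\sfc}_\mcV$ via the orthogonality from \cite{\bik:2008a}) that the paper leaves implicit.
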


\begin{proof}
It follows from Proposition~\ref{pr:loc-seq} that the long exact sequence
\[
\cdots\lto H_{\Si^{-1}(\bar L_\mcV X)}\lto H_{\bar\gam_\mcV X}\lto
H_X\lto H_{\bar L_\mcV X}\lto H_{\Si (\bar\gam_\mcV X)}\lto\cdots
\]
is isomorphic to the localisation sequence \eqref{eq:loc-seq} for
$\sfT^{\sfc}_\mcV\subseteq\sfT^{\sfc}$, applied to $H_X$.
\end{proof}

\subsection*{Localising subcategories}

A full triangulated subcategory of $\sfT$ is \emph{localising} if it
is closed under forming coproducts. Following \cite[\S3]{\bik:2011a}, we
say that the \emph{local-global principle} holds for $\sfT$, if for
each object $X\in\sfT$ we have
\[\Loc(X)=\Loc(\{\bar\gam_\fp X\mid \fp\in\Spec R
\}).\]
This local-global principle has been established in a number of
relevant cases. For instance, it holds when $R$ has
finite Krull dimension   \cite[Corollary~3.5]{\bik:2011a}, or when
$\sfT$ admits a model \cite[Theorem~6.9]{Stevenson:2011a}.

We obtain an alternative proof of Theorem~\ref{th:lgp}, provided the
local-global principle holds for $\sfT$.

\begin{proposition}
\label{pr:big-implies-small}  
The local-global principle for $\sfT$ implies the principle for $\sfT^{\sfc}$.
\end{proposition}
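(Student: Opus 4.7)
The implications (1)$\Ra$(2)$\Ra$(3) of Theorem~\ref{th:lgp} are immediate, so the task reduces to proving (3)$\Ra$(1). Suppose $X\in\sfT^{\sfc}$ satisfies $X(\fp)\in\Thick(\sfS_\fp)$ for every $\fp\in\Spec R$; the goal is to conclude $X\in\sfS$.

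The plan is to lift the problem into the big category $\sfT$. Set $\sfS'=\Loc_\sfT(\sfS)$. A classical theorem of Neeman, that $\sfS=\sfS'\cap\sfT^{\sfc}$ for any thick subcategory of compact objects in a compactly generated triangulated category, reduces the goal to showing $X\in\sfS'$. Invoking the assumed local-global principle for $\sfT$ gives $X\in\Loc_\sfT(\{\bar\gam_\fp X\mid\fp\in\Spec R\})$, so it is enough to prove $\bar\gam_\fp X\in\sfS'$ for each single prime $\fp$.

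Fix such a $\fp$. I would use two auxiliary facts. First, the big-category analogue of Lemma~\ref{le:Loc}, proved by iterating the Koszul computation of Lemma~\ref{le:gamma_r_colim}, yields $\bar\gam_\fp X\in\Loc_\sfT(\bar\gam_\fp \kos X\fp)$. Second, $\kos X\fp$ is already $\mcV(\fp)$-torsion, so $\bar\gam_\fp\kos X\fp$ coincides with its big $\fp$-localisation. The hypothesis identifies this object with $X(\fp)\in\Thick(\sfS_\fp)$, which by standard Verdier-quotient bookkeeping means that $\kos X\fp$ is, up to summands, an extension in $\sfT^{\sfc}$ of an object of $\Thick(\sfS)$ by an object of $\sfT^{\sfc}_{\mcZ(R\setminus\fp)}$. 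Since $\bar\gam_\fp$ annihilates the torsion part and sends the remainder into $\sfS'$---using that $\sfS'$ is stable under $\bar\gam_\fp$ because $\bar\gam_\fp Y\in\Loc_\sfT(Y)$ for every $Y$---we obtain $\bar\gam_\fp\kos X\fp\in\sfS'$ and hence $\bar\gam_\fp X\in\sfS'$.

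The main obstacle I anticipate lies in the third paragraph: establishing the big-category Koszul formula for $\bar\gam_\fp$ and transferring membership in $\Thick(\sfS_\fp)$ from the Verdier quotient $\sfT^{\sfc}_\fp$ to a concrete triangle in $\sfT^{\sfc}$. Both manoeuvres are standard in the compactly generated setting (the first can be extracted from \cite{\bik:2008a}, the second is elementary), but they must be carefully aligned with the big-category functors appearing in the local-global principle. The Neeman identity $\sfS=\sfS'\cap\sfT^{\sfc}$ used in the second paragraph likewise depends on $\sfS$ being thick and on $\sfT$ being compactly generated, both of which are available here.
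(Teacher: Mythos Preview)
Your overall strategy matches the paper's exactly: reduce to showing $X\in\Loc_\sfT(\sfS)$ via Neeman's result $\sfS=\Loc_\sfT(\sfS)\cap\sfT^{\sfc}$, invoke the assumed local-global principle for $\sfT$, and then verify $\bar\gam_\fp X\in\Loc_\sfT(\sfS)$ for each prime $\fp$. The difference lies only in how this last step is executed.

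The paper dispatches it by citing \cite{Benson/Iyengar/Krause:2011a} directly: Lemma~3.8 there gives $\Loc_\sfT(\bar\gam_\fp X)=\Loc_\sfT(X(\fp))$, and Theorem~3.1 gives $\sfS_\fp\subseteq\Loc_\sfT(\sfS)$ (the big $\fp$-localisation of any object lies in the localising subcategory it generates). Since $X(\fp)\in\Thick(\sfS_\fp)$ by hypothesis, the chain $\Loc_\sfT(\bar\gam_\fp X)=\Loc_\sfT(X(\fp))\subseteq\Loc_\sfT(\sfS_\fp)\subseteq\Loc_\sfT(\sfS)$ finishes immediately. No lifting from the Verdier quotient back to $\sfT^{\sfc}$ is needed, because $\sfT^{\sfc}_\fp$ embeds fully faithfully into the $\fp$-local part of $\sfT$, so the hypothesis is already a statement about objects of $\sfT$.

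Your third paragraph reaches the same conclusion by a more circuitous route and contains an imprecision worth correcting. The claim that $\kos X\fp$ is, up to summands, ``an extension in $\sfT^{\sfc}$ of an object of $\Thick(\sfS)$ by an object of $\sfT^{\sfc}_{\mcZ(R\setminus\fp)}$'' is not what Verdier-quotient bookkeeping delivers: what you actually obtain is that $\kos X\fp$ lies (up to summands) in $\Thick_{\sfT^{\sfc}}\bigl(\sfS\cup\sfT^{\sfc}_{\mcZ(R\setminus\fp)}\bigr)$, which involves iterated cones rather than a single triangle. Your conclusion still follows from this corrected statement---$\bar\gam_\fp$ is exact, annihilates $\sfT^{\sfc}_{\mcZ(R\setminus\fp)}$, and sends $\sfS$ into $\Loc_\sfT(\sfS)$---but the lifting should be phrased accurately. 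Once fixed, your argument is correct and essentially the paper's, just with the two citations to \cite{Benson/Iyengar/Krause:2011a} unpacked by hand.
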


\begin{proof}
  We verify that conditions (1)--(3) of Theorem~\ref{th:lgp} are
  equivalent. Evidently (1) $\Ra$ (2) and (2) $\Ra$ (3).

  Assume (3) holds. Given $\fp\in\Spec R$, it follows from
  \cite[Theorem~3.1]{\bik:2011a} that $\sfS_{\fp}$ is contained in
  $\Loc(\sfS)$. Thus
\[
\Loc_{\sfT}(\bar\gam_{\fp}X) = \Loc(X(\fp)) \subseteq \Loc(\sfS_\fp)\subseteq \Loc(\sfS),
\]
where the equality holds by \cite[Lemma~3.8]{\bik:2011a}.  Now the
local-global principle for $\sfT$ yields that $X$ belongs to
$\Loc(\sfS)$. It remains to observe that this implies $X\in\sfS$,
because $X$ is compact and $\sfS$ is a subcategory of compact objects.
\end{proof}

\subsection*{Cohomological localising subcategories}

Any localising subcategory of $\Coh\sfT^{\sfc}$ induces one of $\sfT$
via the restricted Yoneda functor. However, we do not know if this is
a bijection between the corresponding localising subcategories. This
changes when one restricts to localising subcategories that are
defined cohomologically.

We call a localising subcategory $\sfS\subseteq\sfT$
\emph{cohomological} if there is a  cohomological functor
$F\colon\sfT\to\sfA$ such that 
\begin{enumerate}
\item $\sfA$ is an abelian category with exact filtered colimits,
\item $F$ preserves coproducts, and
\item $\sfS$ equals the  full subcategory of objects in
  $\sfT$  annihilated by $F$.
\end{enumerate}
Analogously, a localising subcategory $\sfC\subseteq\Coh\sfT^{\sfc}$ is called
\emph{cohomological} if there is an exact functor
$F\colon\Coh\sfT^{\sfc}\to\sfA$ such that 
\begin{enumerate}
\item $\sfA$ is an abelian category with exact filtered colimits,
\item $F$ preserves coproducts, and
\item $\sfC$ equals the  full subcategory of objects in
  $\Coh \sfT^{\sfc}$  annihilated by $F$.
\end{enumerate}

\begin{example}
  An intersection of cohomological localising subcategories is
  cohomological. Given a subset $\mcU\subseteq\Spec R$, the localising
  subcategories
\[
\{X\in\sfT\mid \supp_R X\subseteq\mcU\}\qquad\text{and}\qquad \{X\in\Coh\sfT^{\sfc}\mid \supp_R X\subseteq\mcU\}
\]
are cohomological.
\end{example}

\begin{proposition}
Taking a localising subcategory  $\sfC\subseteq\Coh\sfT^{\sfc}$ to
$\{X\in\sfT\mid H_X\in\sfC\}$ induces a bijection between
\begin{enumerate}
\item[--] the cohomological localising subcategories of $\Coh\sfT^{\sfc}$, and
\item[--] the cohomological localising subcategories of $\sfT$.
\end{enumerate}
\end{proposition}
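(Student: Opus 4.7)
The plan is to construct an explicit inverse $\Psi$ to the stated map $\Phi\col\sfC\mapsto\{X\in\sfT\mid H_X\in\sfC\}$, using the following universal property of the restricted Yoneda functor $H\col \sfT\to\Coh\sfT^{\sfc}$: every cohomological functor $F\col\sfT\to\sfA$ that preserves coproducts, with $\sfA$ abelian and having exact filtered colimits, extends through $H$ to an exact, coproduct-preserving functor $\tilde F\col\Coh\sfT^{\sfc}\to\sfA$ with $\tilde F\comp H\cong F$, and this extension is uniquely determined by $F|_{\sfT^{\sfc}}$. Granting this, for a cohomological localising subcategory $\sfS=\Ker(F)$ of $\sfT$ I set $\Psi(\sfS)=\Ker(\tilde F)$, which is visibly a cohomological localising subcategory of $\Coh\sfT^{\sfc}$. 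In the reverse direction, if $\sfC=\Ker(G)$ is cohomological, then $\Phi(\sfC)=\Ker(G\comp H)$ is cohomological since $G\comp H$ is cohomological (the Yoneda functor is cohomological and $G$ is exact) and preserves coproducts (compactness of the objects of $\sfT^{\sfc}$ makes $H$ coproduct-preserving, and $G$ preserves coproducts by hypothesis).

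The bijectivity then follows quickly. For $\Phi\comp\Psi=\Id$: one has $\Phi(\Psi(\sfS))=\{X\in\sfT\mid \tilde F(H_X)=0\}=\{X\in\sfT\mid F(X)=0\}=\sfS$ using $\tilde F\comp H\cong F$. For $\Psi\comp\Phi=\Id$: given $\sfC=\Ker(G)$, the composite $G\comp H$ satisfies the hypotheses of the universal property, and by its uniqueness clause $\widetilde{G\comp H}\cong G$ (both agree with $G|_{\sfT^{\sfc}}$ on representables), so $\Psi(\Phi(\sfC))=\Ker(G)=\sfC$.

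The substance of the proof is thus the universal property of $H$. I would define $\tilde F$ as the left Kan extension of $F|_{\sfT^{\sfc}}$ along the Yoneda embedding $\sfT^{\sfc}\hookrightarrow\Coh\sfT^{\sfc}$: writing any $G\in\Coh\sfT^{\sfc}$ via the canonical filtered-colimit-of-representables presentation from Lemma~\ref{le:flat}, set $\tilde F(G)=\colim F(C)$ over the slice $\sfT^{\sfc}/G$. Preservation of filtered colimits and of coproducts is then formal, and $\tilde F(H_C)\cong F(C)$ for compact $C$ by Yoneda. For the comparison $\tilde F(H_X)\cong F(X)$ at an arbitrary $X\in\sfT$, I would use compact generation to write $X$ as a homotopy colimit $\hocolim X_n$ of compacts through a triangle $\bigoplus X_n\to\bigoplus X_n\to X$. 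Compactness of each $X_n$ yields $H_X\cong\colim H_{X_n}$ in $\Coh\sfT^{\sfc}$, while applying $F$ to the triangle and exploiting its cohomological and coproduct-preserving nature together with exactness of filtered colimits in $\sfA$ identifies $F(X)$ with $\colim F(X_n)\cong\tilde F(H_X)$.

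The main obstacle will be verifying that $\tilde F$ is exact with respect to the Quillen exact structure on $\Coh\sfT^{\sfc}$. The approach is to recognise every admissible short exact sequence in $\Coh\sfT^{\sfc}$ as a filtered colimit of short exact sequences extracted from triangles in $\sfT^{\sfc}$ via Yoneda; since $F|_{\sfT^{\sfc}}$ is cohomological, these representable pieces go to exact sequences in $\sfA$, and exactness of filtered colimits in $\sfA$ transports exactness through the colimit. The uniqueness clause in the universal property then comes for free, since any two exact functors on $\Coh\sfT^{\sfc}$ that preserve filtered colimits and agree on representables must coincide on all of $\Coh\sfT^{\sfc}$, which is the filtered-colimit closure of its representable objects.
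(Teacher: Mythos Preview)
Your approach is essentially the same as the paper's: construct the inverse by extending a cohomological coproduct-preserving $F\colon\sfT\to\sfA$ to an exact coproduct-preserving $\bar F\colon\Coh\sfT^{\sfc}\to\sfA$ via filtered colimits of representables, and then use the vanishing equivalence $F(X)=0\iff\bar F(H_X)=0$ together with the essential uniqueness of the extension. The paper's proof is terse and simply asserts both the extension property and the vanishing equivalence, whereas you supply the details---in particular the hocolim argument for $\tilde F(H_X)\cong F(X)$ and the verification of exactness---so your proposal is a correct, fleshed-out version of the paper's argument.
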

\begin{proof}
  To describe the inverse map, let $F\colon\sfT\to\sfA$ be a
  cohomological functor which preserves coproducts. This extends
  essentially uniquely to an exact and coproduct preserving functor
  $\bar F\colon\Coh\sfT^{\sfc}\to\sfA$ by sending a filtered colimit of
  representable functors $\colim_\alpha H_{X_\alpha}$ to
  $\colim_\alpha F(X_\alpha)$. It remains to observe that for each
  $X\in\sfT$ we have $F(X)=0$ iff $\bar F(H_X)=0$. Thus the localising
  subcategories determined by $F$ and $\bar F$ correspond to each
  other under the above assignment.
\end{proof}

\subsection*{Acknowledgements} It is our pleasure to thank an
anonymous referee  for a critical reading and many helpful comments.


\begin{thebibliography}{99}
\bibitem{Avramov/Buchweitz:2000a} L.~L. Avramov and R.-O. Buchweitz,
  Support varieties and cohomology over complete intersections,
  Invent. Math. \textbf{142} (2000), 285--318.

\bibitem{Balmer:2010a} P. Balmer, Spectra, spectra, spectra -- Tensor
  triangular spectra versus Zariski spectra of endomorphism rings,
  Algebr. Geom. Topol. {\bf 10} (2010), no.~3, 1521--1563.

\bibitem{Benson/Gnacadja:2001a} D. J. Benson\ and\ G. Ph. Gnacadja,
  Phantom maps and purity in modular representation theory. II,
  Algebr. Represent. Theory {\bf 4} (2001), no.~4, 395--404.

\bibitem{Benson/Iyengar/Krause:2008a}
D.~J. Benson, S.~B. Iyengar, and H.~Krause, {Local cohomology and support
  for triangulated categories}, Ann.\ Scient.\ \'Ec.\ Norm.\ Sup.\ (4)
  \textbf{41} (2008), 575--621.

\bibitem{Benson/Iyengar/Krause:2011a} D. Benson, S. B. Iyengar\ and\
  H. Krause, Stratifying triangulated categories, J. Topol. {\bf 4}
  (2011), no.~3, 641--666.

\bibitem{Benson/Iyengar/Krause:2011b} D.~J. Benson, S.~B. Iyengar, and
H.~Krause, {Stratifying modular representations of finite
groups}, Ann.\ of Math. \textbf{174} (2011), 1643--1684.

\bibitem{Bergh/Oppermann:2011a} P. A. Bergh\ and\ S. Oppermann,
  Cohomological symmetry in triangulated categories, J. Algebra {\bf
    347} (2011), 143--152.

\bibitem{Bourbaki:1961a} N. Bourbaki, {\it \'El\'ements de
    math\'ematique. Fascicule XXVII. Alg\`ebre commutative. Chapitre
    1: Modules plats. Chapitre 2: Localisation}, Actualit\'es
  Scientifiques et Industrielles, No. 1290 Herman, Paris, 1961.

\bibitem{Carlson:1983a} J.~F. Carlson, The varieties and the
  cohomology ring of a module, J.  Algebra \textbf{85} (1983),
  104--143.

\bibitem{DS:2013a} I. Dell'Ambrogio\ and\ G. Stevenson, On the derived
  category of a graded commutative Noetherian ring, J. Algebra {\bf
    373} (2013), 356--376.

\bibitem{Dwyer/Greenlees/Iyengar:2006} W. Dwyer, J. P. C. Greenlees\
  and\ S. Iyengar, Finiteness in derived categories of local rings,
  Comment. Math. Helv. {\bf 81} (2006), no.~2, 383--432.

\bibitem{Gabriel/Zisman:1967a} P. Gabriel\ and\ M. Zisman, {\it
    Calculus of fractions and homotopy theory}, Springer-Verlag New
  York, Inc., New York, 1967.
  
\bibitem{Grothendieck/Verdier:1972a} A. Grothendieck and
  J. L. Verdier, Pr\'efaisceaux, in {\it SGA 4, Th\'eorie des Topos et
    Cohomologie Etale des Sch\'emas, Tome 1. Th\'eorie des Topos},
  Lect. Notes in Math., vol. 269, Springer, Heidelberg, 1972-1973,
  pp. 1-184.

\bibitem{Hopkins:1987a} M. J. Hopkins, Global methods in homotopy
  theory, in {\it Homotopy theory (Durham, 1985)}, 73--96, London
  Math. Soc. Lecture Note Ser., 117 Cambridge Univ. Press, Cambridge,
  1987.

\bibitem{Hovey/Palmieri/Strickland:1997a} M.~Hovey, J.~H.~Palmieri,
  and N.~P.~Strickland, Axiomatic stable homotopy theory,
  Mem. Amer. Math. Soc. \textbf{128} (1997), no.~610, x+114.

\bibitem{Krause:2000a} H. Krause, Smashing subcategories and the
  telescope conjecture---an algebraic approach, Invent. Math. {\bf
    139} (2000), no.~1, 99--133.
 
\bibitem{Krause:2005b} H. Krause, Cohomological quotients and smashing
  localizations, Amer. J. Math. {\bf 127} (2005), no.~6, 1191--1246.

\bibitem{Matsumura:1986a} H. Matsumura, {\it Commutative ring theory},
  translated from the Japanese by M. Reid, Cambridge Studies in
  Advanced Mathematics, 8, Cambridge Univ. Press, Cambridge, 1986.

\bibitem{Neeman:1992a} A. Neeman, The Brown representability theorem
  and phantomless triangulated categories, J. Algebra {\bf 151}
  (1992), no.~1, 118--155. 

\bibitem{Neeman:1992b} A. Neeman, The chromatic tower for $D(R)$,
  Topology {\bf 31} (1992), no.~3, 519--532.

\bibitem{Neeman:1997a} A. Neeman, On a theorem of Brown and Adams,
  Topology {\bf 36} (1997), no.~3, 619--645.

\bibitem{Oberst/Roehrl:1970a} U. Oberst\ and\ H. R\"ohrl, Flat and
  coherent functors, J. Algebra {\bf 14} (1970), 91--105.

\bibitem{Quillen:1971ab} D. Quillen, The spectrum of an equivariant
  cohomology ring. I, II, Ann. of Math. (2) {\bf 94} (1971), 549--572;
  ibid. (2) {\bf 94} (1971), 573--602.

\bibitem{Stevenson:2011a} G. Stevenson, Support theory via actions of
  tensor triangulated categories, J. Reine Angew. Math. {\bf 681}
  (2013), 219--254.

\bibitem{Stevenson:2011b} G. Stevenson, Subcategories of singularity
  categories via tensor actions, arXiv:1105.4698 (to appear in
  Compositio Math.).

\bibitem{Verdier:1997a} J.-L. Verdier, Des cat\'egories d\'eriv\'ees
  des cat\'egories ab\'eliennes, Ast\'erisque No. 239 (1996), {\rm
    xii}+253 pp. (1997).
\end{thebibliography}
\end{document}